\newtheorem{thm}{Theorem}[section]
\newtheorem{lem}[thm]{Lemma}
\newtheorem{conj}[thm]{Conjecture}
\newtheorem{rem}[thm]{Remark}
\newtheorem{defn}[thm]{Definition}
\numberwithin{equation}{section}
\numberwithin{table}{section}
\newcommand{\ii}{i}
\newcommand{\vr}{\varrho}
\newcommand{\Mma}{\textit{Mathematica }}
\newcommand{\Reduce}{\texttt{Reduce}}
\newcommand{\NSolve}{\texttt{NSolve}}
\newcommand{\Order}{\mathcal O}
\newcommand{\one}{\mathbbm{1}}
\newcommand{\Real}{\mathbb{R}}
\newcommand{\Pisp}{\Pi_{s/s,p}}
\newcommand{\Pihatsp}{\widehat{\Pi}_{s/s,p}}
\newcommand{\Pihat}{\widehat{\Pi}}
\newcommand{\Rsp}{R_{s/s,p}}
\newcommand{\Rhatsp}{\widehat{R}_{s/s,p}}
\title{Rational functions with maximal radius of absolute monotonicity}
\author{Lajos L\'oczi \and David I. Ketcheson}
\begin{document}

\maketitle

{\abstract{We study the radius of absolute monotonicity $R$ of rational functions with
numerator and denominator of degree $s$ that approximate
the exponential function to order $p$.  Such functions arise in the application of implicit
$s$-stage, order $p$ Runge--Kutta methods for initial value problems, and the radius of absolute 
monotonicity governs the numerical preservation of properties like positivity
and maximum-norm contractivity.  
We construct a function with $p=2$ and $R>2s$, disproving a conjecture of van de Griend and Kraaijevanger.
We determine the maximum attainable radius for
functions in several one-parameter families of rational functions. Moreover, we
prove earlier conjectured optimal radii in some families with 2 or 3 parameters
via uniqueness arguments for systems of polynomial inequalities.  Our results
also prove the optimality of some strong stability preserving implicit and singly diagonally
implicit Runge--Kutta methods.  Whereas previous results in this area were
primarily numerical, we give all constants as exact algebraic numbers.
}}


\section{Introduction and aims\label{sec:intro}}
A smooth function $\psi: \mathbb{R} \to \mathbb{R}$ is said to be {\em absolutely monotonic} at a point if $\psi$ and all
of its derivatives are non-negative there.  The {\em radius of absolute monotonicity} $R(\psi)\in [0,+\infty]$ is defined as
\[
R(\psi) = \sup\left(\{ r\in [0,+\infty)  : \psi \text{ is absolutely monotonic at each point of } [-r,0]\}\cup \{0\}\right).
\]
The radius of absolute monotonicity
of polynomials and rational functions plays an important role in the analysis of positivity,
monotonicity, and contractivity
properties of numerical methods for initial value problems and is often referred to
as the {\em threshold factor} in this context \cite{bc,spijker1983,kp,vdgk,ketcheson2009a}. 
Specifically, the maximal positive or contractive step-size is given by $R(\psi) h_0$,
where $\psi$ is the stability function of the numerical method and $h_0$ is the maximum
step-size under which the corresponding property holds for the explicit Euler method.

It is therefore natural to consider the problem of finding a function $\psi$ that 
achieves the \textit{maximal radius of absolute monotonicity} within a given class.  In this work, we study absolute monotonicity of rational functions that correspond to the stability
functions of  certain \textit{implicit} or \textit{singly diagonally implicit Runge--Kutta methods}.

A Runge--Kutta (RK) method of $s\in\mathbb{N}^+$ stages is defined by its coefficients, an $s\times s$ matrix $A$ and
an $s\times 1$ vector $b$ \cite{Butcher_2008}.
The \textit{stability function} of the method is
\begin{equation}\label{stabilityfunctionformula}
\psi(z)=\psi^{A,b}(z) := \frac{\det(I-zA + z \one b^\top)}{\det(I-zA)}\quad\quad (z\in\mathbb{C}),
\end{equation}
where $\one$ is a column vector of length $s$ with all unit entries and $I$ is the $s\times s$ identity matrix
\cite{hnwbook}.
The \textit{order} of a method, denoted by $p\in\mathbb{N}^+$, indicates how accurately the computed solution approximates
the exact solution in an asymptotic sense.  The stability function of an RK method of order $p$ must approximate
the exponential function to at least order $p$ near the origin.

In an {\em implicit Runge--Kutta method} (IRK), all entries of $A$ may be non-zero.
An important subclass of the IRK methods are the \textit{singly diagonally implicit} RK (SDIRK) methods,
with $A$ lower triangular with identical diagonal
entries. For {\em explicit RK methods}, $A$ is strictly lower triangular.

We now define the classes of rational functions to be studied.
For $m,n\in\mathbb{N}$,  let $\Pi_m$ denote the set of real polynomials of degree at most $m$:
\[\Pi_m=\bigg\{\sum_{j=0}^m \beta_j z^j : \beta_j\in\mathbb{R}, \ j=0,1,\ldots, m\bigg\},\]
and let
$\Pi_{m/n,p}$ denote the set of all real $(m,n)$-rational functions 
approximating the exponential to order $p$ near the origin:
\[\Pi_{m/n,p}=\Big\{ \psi : \psi=\frac{P}{Q}, P\in\Pi_m, 0 \not\equiv Q\in\Pi_n, \psi(z) - \exp(z) = \Order(z^{p+1}) \text{ as } z\to 0 \Big\}.\]
Let $\widehat{\Pi}_{m/n,p}$ denote the elements of $\Pi_{m/n,p}$ whose
denominator has (at most) a single, non-zero real root:
\[
\widehat{\Pi}_{m/n,p}=\Big\{ \psi\in \Pi_{m/n,p}: \psi(z)=\frac{P(z)}{(1-a z)^n},\  P\in\Pi_m, a\in\mathbb{R}  \Big\}.
\]
Obviously, for every $0\le m\le \widetilde{m}$, $0\le n\le \widetilde{n}$ and $1\le p\le \widetilde{p}$, we have $\widehat{\Pi}_{m/n,p}\subset \Pi_{m/n,p}$, $\Pi_{m/n,p}\subset \Pi_{\widetilde{m}/\widetilde{n},p}$ and $\Pi_{m/n,p}\supset \Pi_{m/n,\widetilde{p}}$.

If $A$ and $b$ correspond to an IRK method of $s$ stages and order $p$, then 
\begin{equation}\label{psiAbinPissp} \psi^{A,b} \in \Pi_{s/s,p},\end{equation}
while for SDIRK methods, 
\begin{equation}\label{psiAbinPihatssp} \psi^{A,b} \in \widehat{\Pi}_{s/s,p}.\end{equation} 
For explicit RK methods we have $\psi^{A,b} \in \Pi_s$. 

A thorough study of polynomial approximations to the exponential with
maximal radius of absolute monotonicity can be found in \cite{kp,ketcheson2009a}.
We are interested in determining the maximal radius of absolute monotonicity that
can be achieved among the stability functions of IRK or SDIRK methods of a
given order $p$.  Therefore, for non-empty sets $\Pi_{m/n,p}$ and $\widehat{\Pi}_{m/n,p}$, let us define the quantities
\begin{align*}
R_{m/n,p} & := \sup\{ R(\psi) : \psi \in \Pi_{m/n,p} \} \\
\widehat{R}_{m/n,p} & := \sup\{ R(\psi) : \psi \in \widehat{\Pi}_{m/n,p} \}.
\end{align*}
We will focus on the cases for which $m=n=s$.

The seminal work on this topic
is \cite{vdgk}, in which an algorithm is presented for computing the radius of absolute
monotonicity of a rational function, and many properties of the radius of absolute
monotonicity are proved. The determination of $R(\psi)$ is not trivial even for a single rational function $\psi$, 
so the difficulty of obtaining $R_{m/n,p} $ or  $\widehat{R}_{m/n,p} $ for a particular $(m,n,p)$ triple ranges from fairly challenging to currently impossible.
Nevertheless, some patterns in numerically computed values have led to important conjectures.

In order to be able to fully describe these conjectures, we recall the concept
of \textit{radius of absolute monotonicity of a Runge--Kutta method} \cite{kc}, denoted 
by $R(A,b)$. The quantity $R(A,b)$ is also referred to as
\textit{Kraaijevanger's coefficient} \cite{fs}, or \textit{SSP coefficient}
\cite{SSPbook}. For a RK method with coefficients $A,b$, define $K\in \Real^{(s+1)\times (s+1)}$ by
\[
K=K(A,b):=\left(
\begin{array}{cc}
 A & 0 \\
 b^\top & 0 \\
\end{array}
\right).
\]
Let $\one$ denote now the vector $(1,1, \ldots, 1)^\top\in\Real^{s+1}$. 
Then the radius of absolute monotonicity of the RK method is
\[
R(A,b):=\sup\{  r\in\mathbb{R} : \forall \varrho\in [0,r]\  \exists (I+\varrho K)^{-1}, \varrho K(I+\varrho K)^{-1}\ge 0\ \mathrm{and}\  \varrho K(I+\varrho K)^{-1}\one\le \one\},
\]
where vector and matrix inequalities are understood componentwise.
Notice that $R(A,b)\ge 0$.
Absolute monotonicity of a Runge--Kutta method implies absolute monotonicity
of its stability function \cite{kc}; thus we have 
\begin{align} \label{Rineq}
R(A,b) \le  R(\psi^{A,b}).
\end{align}
The coefficient $R(A,b)$ plays the same role in numerical preservation of positivity and contractivity 
for non-linear problems that the coefficient $R(\psi^{A,b})$ plays for linear problems \cite{SSPbook}.

The following conjectures served as motivation for our work. 
\begin{conj}[({\cite[p. 421]{vdgk}})]\label{conj:vdgk}
For $m,n\in {\mathbb N}^+$, $R_{m/n,2} = m + \sqrt{mn}.$
\end{conj}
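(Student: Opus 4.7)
My plan is to prove $R_{m/n,2}=m+\sqrt{mn}$ by establishing the two inequalities separately, with the upper bound being by far the harder half.

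For the lower bound, I would try the two-parameter product family
\[
\psi(z)=\frac{(1+\gamma z)^m}{(1-\delta z)^n},\qquad \gamma,\delta>0.
\]
Since absolute monotonicity is closed under multiplication and the two factors are individually absolutely monotonic on $[-1/\gamma,0]$ and on $(-\infty,1/\delta)$ respectively, one immediately has $R(\psi)\ge 1/\gamma$. Matching $\exp$ to order $2$ amounts to requiring the coefficients of $z$ and $z^2$ in $\log\psi(z)=m\log(1+\gamma z)-n\log(1-\delta z)$ to equal $1$ and $0$, giving $m\gamma+n\delta=1$ and $m\gamma^2=n\delta^2$, with unique positive solution
\[
\gamma=\frac{1}{\sqrt m(\sqrt m+\sqrt n)},\qquad \delta=\frac{1}{\sqrt n(\sqrt m+\sqrt n)},
\]
so $1/\gamma=m+\sqrt{mn}$. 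For $z$ just to the left of $-1/\gamma$, either $\psi(z)<0$ (when $m$ is odd) or $\psi'(z)<0$ (when $m$ is even, because the term $m\gamma(1+\gamma z)^{m-1}(1-\delta z)^{-n}$ in $\psi'$ dominates and has sign $(-1)^{m-1}$), so $R(\psi)=1/\gamma$ exactly. Since $\psi\in\Pi_{m/n,2}$, this witnesses $R_{m/n,2}\ge m+\sqrt{mn}$.

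For the upper bound, take any $\psi=P/Q\in\Pi_{m/n,2}$ absolutely monotonic on $[-r,0]$, normalize $Q(0)=1$, and expand at $-r$ as
\[
\psi(z)=\sum_{k\ge 0}c_k(z+r)^k,\qquad c_k\ge 0.
\]
The three order-$2$ conditions at $z=0$ become the linear moment equalities $\sum_k c_kr^k=\sum_k kc_kr^{k-1}=\sum_k k(k-1)c_kr^{k-2}=1$. In addition, since $\psi$ is rational with numerator degree $\le m$ and denominator degree $\le n$, the sequence $(c_k)$ satisfies a linear recurrence of order $n$ for all sufficiently large $k$, whose coefficients are the Taylor coefficients of $Q$ at $-r$. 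The natural strategy would be to combine the moment conditions with a Cauchy--Schwarz-type inequality on $(c_k)$ and then use the recurrence structure together with $\deg P\le m$ to bound $r$, matching equality against the product form of the first part.

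The main obstacle, and in my view the likely failure point of this strategy, is exactly the upper bound. The class $\Pi_{m/n,2}$ has substantially more freedom than the zero-parameter extremizer from the lower bound uses, and nothing in the proposed moment/recurrence analysis obviously forces the optimal numerator to have all its roots concentrated at $-r$. An asymmetric numerator with several distinct negative roots could conceivably yield $R>m+\sqrt{mn}$, in which case the conjecture fails outright. Before committing to a formal argument I would first numerically probe the conjecture for small symmetric pairs $(m,n)=(s,s)$ using the algorithm of \cite{vdgk} to see whether such counterexamples actually exist.
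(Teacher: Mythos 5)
This statement is a \emph{conjecture}, not a result the paper proves --- indeed, the paper's signature contribution concerning it is a \emph{counterexample}, so there is no ``paper's proof'' to compare against. Your lower-bound argument is correct: with $\gamma=1/(m+\sqrt{mn})$ and $\delta=\gamma\sqrt{m/n}$ the function $\psi(z)=(1+\gamma z)^m/(1-\delta z)^n$ lies in $\Pi_{m/n,2}$, is a product of two factors each absolutely monotonic on $[-1/\gamma,0]$, and thus certifies $R_{m/n,2}\ge m+\sqrt{mn}$; for $m=n=s$ this is exactly the function \eqref{opt_f}, the stability function of repeated implicit midpoint steps. That half is fine and agrees with what was already known.

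Your skepticism about the upper bound is precisely vindicated, and the numerical probe you propose as a sanity check is the right move: the conjecture is \emph{false} for $(m,n)=(3,3)$. Section~\ref{counterexamplesection} exhibits $\psi_{32}\in\Pi_{3/3,2}$ with $R(\psi_{32})\approx 6.7783 > 6 = 3+\sqrt{3\cdot 3}$. The structural feature is exactly the one you flagged as the weak point of the moment/recurrence strategy: the extremizer over the full four-parameter family $\Pi_{3/3,2}$ does not have its denominator concentrated as a power $(1-az)^3$ --- $\psi_{32}$ has one positive real pole and a complex-conjugate pair --- so it escapes the restricted class $\widehat{\Pi}_{3/3,2}$, on which $2s=m+\sqrt{mn}$ actually is optimal (Theorem~\ref{Thm2.2}). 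Consequently there is no correct completion of the upper-bound half of your plan for general $(m,n)$; the bound $m+\sqrt{mn}$ only holds in special cases (e.g., $n\le 2$, proved in \cite{vdgk}, and on the subclasses $\widehat{\Pi}_{s/s,2}$ for $s\le 4$ via the paper's Theorem~\ref{Thm2.2}).
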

In \cite{vdgk}, this conjecture was proved for all $m\ge 1$ with $n=1$ or $2$.
In the special case $m=n=s\in\mathbb{N}^+$, the conjecture claims 
\begin{align} \label{2sbound}
R_{s/s,2} = 2s.
\end{align}
For $A,b$ corresponding to an RK method with $s$ stages and order $p$, we have $R(\psi^{A,b})\le R_{s/s,p}$.
Moreover, $R_{s/s,p}$ is a non-increasing function of $p$ for fixed $s$; therefore \eqref{Rineq} and (\ref{2sbound}) 
together would 
imply that
$R(A,b)\le 2s$ for all Runge--Kutta methods that are more than first order
accurate.  Indeed, evidence in the literature reinforces belief in the bound
\eqref{2sbound}; we have the following conjectures based on numerical searches.
\begin{conj}[({\cite[Conjecture 3.1]{fs}})]\label{conj:fs}
Let $A,b$ denote the coefficients of an SDIRK method of
order $p\ge 2$ with $s\ge 1$ stages.  Then $R(A,b)\le 2s$.
\end{conj}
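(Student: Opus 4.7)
The plan is to derive $R(A,b)\le 2s$ at the level of stability functions. By \eqref{Rineq} and \eqref{psiAbinPihatssp}, it suffices to establish
\[
\widehat{R}_{s/s,p}\le 2s \qquad(s\ge 1,\ p\ge 2).
\]
This reduction is essential in a specific way: the counterexample to Conjecture~\ref{conj:vdgk} advertised in the abstract shows that the analogous inequality fails in the larger class $\Pi_{s/s,2}$, so the proof must critically exploit the single-pole denominator structure defining $\widehat{\Pi}_{s/s,p}$.

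Write $\psi\in\widehat{\Pi}_{s/s,p}$ as $\psi(z)=P(z)/(1-az)^s$ with $\deg P\le s$ and $a\in\mathbb{R}$. A case split on $a$ reduces the problem to a single substantive range. If $a\le -1/(2s)$, the pole $z=1/a$ lies in $[-2s,0)$ and $R(\psi)<2s$ trivially. If $a=0$, then $\psi$ is a polynomial of degree $\le s$ approximating the exponential to order $p\ge 2$, and the classical polynomial bounds from \cite{kp,ketcheson2009a} give $R(\psi)<2s$ directly. It remains to handle $a\in(-1/(2s),0)\cup(0,+\infty)$, a range in which $(1-az)^s>0$ throughout $[-2s,0]$, so that absolute monotonicity on $[-2s,0]$ becomes a weighted nonnegativity system on $P$ and its derivatives.

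For this substantive range I would exploit the identity
\[
1=\psi(0)=\sum_{k\ge 0}\frac{\psi^{(k)}(-R)}{k!}\,R^k,
\]
valid whenever $R(\psi)\ge R$. Each summand is nonnegative; the first $p+1$ are pinned down by the order condition $\psi(z)-e^z=\Order(z^{p+1})$; and the later summands are controlled through Leibniz's rule applied to $P\cdot(1-az)^{-s}$, giving
\[
\psi^{(k)}(z)=\sum_{j=0}^{k}\binom{k}{j}P^{(j)}(z)\cdot s(s+1)\cdots(s+k-j-1)\,\frac{a^{k-j}}{(1-az)^{s+k-j}}.
\]
Setting $R=2s$ and comparing termwise against $e^{2s}=\sum_{k}(2s)^k/k!$ should force a strict polynomial inequality in $a$ and $s$ that is incompatible with the order-$p$ constraints on the leading coefficients of $P$.

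The main obstacle is the sharpness of the bound: because $R(\psi)>2s$ is achievable in $\Pi_{s/s,2}$, every step of the argument must fully use that the denominator has a single real root of multiplicity exactly $s$, not $s$ distinct roots. The most delicate technical point will be controlling the tail $\sum_{k>p}\psi^{(k)}(-2s)(2s)^k/k!$ uniformly in $s$, since for large $s$ it has many potentially competing contributions. A complementary attack via a uniqueness analysis of the extremizer of $\sup\{R(\psi):\psi\in\widehat{\Pi}_{s/s,p}\}$, in the spirit of the paper's results on low-parameter families, combined with a compactness argument after suitable normalization of $P$, appears to be the most promising route to closing the argument for all $s$.
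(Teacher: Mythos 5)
The statement you are proving is a conjecture that remains open; the paper establishes it only for $s\le 4$, by proving $\widehat{R}_{s/s,2}=2s$ in Theorem~\ref{Thm2.2} and combining with the chain $R(A,b)\le R(\psi^{A,b})\le\widehat{R}_{s/s,p}\le\widehat{R}_{s/s,2}$. Your proposal targets all $s\ge 1$, and as written it does not close. The preliminary steps are fine: the reduction to $\widehat{R}_{s/s,p}\le 2s$ via \eqref{Rineq} and \eqref{psiAbinPihatssp}, the Leibniz formula you record for $\psi^{(k)}$ (equivalent to the paper's \eqref{psigeneralderivative}), and the observation that the single-pole denominator structure is essential given the counterexample in $\Pi_{3/3,2}$. (The case $a\le 0$ is handled more cleanly and uniformly by Lemma~\ref{a_pos} together with Theorem~\ref{vdgkCorollary3.4}, yielding $R(\psi)\le s-1$, without the $1/(2s)$ threshold.) The gap is in the core step. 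The claim that ``the first $p+1$ [summands $\psi^{(k)}(-2s)$] are pinned down by the order condition'' is not correct: the order conditions fix $\psi^{(k)}(0)$ for $k\le p$, not $\psi^{(k)}(-2s)$, and there is no short relation between the two. Moreover the single identity $1=\sum_k\psi^{(k)}(-2s)(2s)^k/k!$, with all terms nonnegative, does not by itself contradict $R(\psi)>2s$; you would need a second constraint biting the other way, and none is produced. You concede the tail estimate is unresolved, and that is exactly where the difficulty lies.

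Your closing suggestion---a uniqueness analysis of the extremizer---is in fact the paper's actual route for $s\le 4$: after Lemma~\ref{a_pos} forces $a>0$, absolute monotonicity at $-2s$ becomes the finite polynomial system \eqref{firstgroup}--\eqref{secondgroup} supplemented by the leading-coefficient necessary condition \eqref{limitcondition}, and the paper eliminates the free coefficients by hand (Sections~\ref{sectionSDIRKs3p2} and~\ref{sectionSDIRKs4p2}) to show that the only solution is $a=1/(2s)$ with $\psi(z)=\bigl(1+z/(2s)\bigr)^s/\bigl(1-z/(2s)\bigr)^s$. That elimination is intricate, case-dependent, and already runs to many pages at $s=4$; nothing in it is uniform in $s$, which is precisely why the conjecture is still open for $s\ge 5$. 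A compactness-and-normalization argument does not obviously help either, since the obstacle is not the existence of a maximizer but the exclusion of competitors with $R>2s$---which is the uniqueness question itself.
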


\begin{conj}[({\cite{kmg}})]\label{conj:kmg}
Let $A,b$ denote the coefficients of an implicit RK method of
order $p\ge 2$ with $s\ge 1$ stages.  Then $R(A,b)\le 2s$.
\end{conj}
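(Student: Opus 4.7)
The most obvious route would be to chain the inequalities $R(A,b)\le R(\psi^{A,b})\le R_{s/s,p}\le R_{s/s,2}$ from \eqref{Rineq} and \eqref{psiAbinPissp} together with the monotonicity of $R_{s/s,p}$ in $p$, and then appeal to the bound $R_{s/s,2}\le 2s$, i.e.\ the $m=n=s$ case of Conjecture~\ref{conj:vdgk}. However, this paper explicitly closes off that route by constructing an element of $\Pi_{s/s,2}$ with radius of absolute monotonicity strictly greater than $2s$. Any proof of Conjecture~\ref{conj:kmg} must therefore bypass the stability function entirely and argue at the level of the Butcher array $(A,b)$ itself.

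The plan is to work directly from the definition of $R(A,b)$. Suppose for contradiction that some IRK method of order $p\ge 2$ with $s$ stages satisfies $R(A,b)>2s$. Then for every $\varrho\in(0,R(A,b)]$ the matrix $T_\varrho:=\varrho K(I+\varrho K)^{-1}$ exists, is entrywise non-negative, and satisfies $T_\varrho\one\le\one$. The special block form of $K$ (with zero last column) reduces these constraints to the requirements that $\varrho A(I+\varrho A)^{-1}$ and $\varrho b^\top(I+\varrho A)^{-1}$ be entrywise non-negative, that $\varrho A(I+\varrho A)^{-1}\one\le\one$, and that $\varrho b^\top(I+\varrho A)^{-1}\one\le 1$. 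I would then translate the order conditions $b^\top\one=1$ and $b^\top A\one=1/2$ into scalar identities involving these objects and combine them with the pointwise non-negativity at a test value $\varrho$ slightly larger than $2s$. A plausible tool is the Perron--Frobenius theorem applied to the non-negative matrix $\varrho A(I+\varrho A)^{-1}$, whose spectral radius is forced to be at most $1$ by the row-sum condition, yielding eigenvalue constraints on $A$ that one hopes are incompatible with second order accuracy beyond $\varrho=2s$.

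The fundamental obstacle is precisely the phenomenon uncovered elsewhere in this paper: because the stability function bound $R_{s/s,2}\le 2s$ fails, no argument that extracts only scalar information from $\psi^{A,b}$ can possibly certify Conjecture~\ref{conj:kmg}, and one must genuinely exploit the finer, entry-by-entry non-negativity of $T_\varrho$. The SDIRK case (Conjecture~\ref{conj:fs}) is more accessible because the lower-triangular form of $A$ permits a row-by-row analysis and, ultimately, a separable reduction; for fully implicit methods no such reduction is available, and the coupling between the order conditions and the componentwise non-negativity of $T_\varrho$ does not appear to yield the bound by any standard spectral or combinatorial argument. I therefore expect a complete proof of Conjecture~\ref{conj:kmg} to remain out of reach without an essentially new structural idea linking $A$, $b$, and the positivity of $T_\varrho$, and the authors most likely address only partial progress in what follows.
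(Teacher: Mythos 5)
You have correctly recognized that the statement in question is a conjecture that the paper does \emph{not} prove. Conjecture~\ref{conj:kmg} is taken from \cite{kmg} and is left open; the authors explicitly state that it was previously proved only for $s=1,2$ and that, as a consequence of their counterexample in $\Pi_{3/3,2}$, it ``cannot be proved by analyzing $R_{s/s,2}$.'' Your observation that the chain $R(A,b)\le R(\psi^{A,b})\le R_{s/s,p}\le R_{s/s,2}\le 2s$ collapses because $R_{3/3,2}>6$ is precisely the point the paper makes, so the first part of your discussion is an accurate restatement of the paper's reasoning about why this conjecture is now harder to prove, not easier.

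Where your proposal parts company with the paper: the Perron--Frobenius/row-sum sketch you describe at the level of $K(A,b)$ is entirely your own speculation, and the paper never attempts any direct attack on Conjecture~\ref{conj:kmg}, spectral or otherwise. What the paper actually does toward the circle of conjectures is (i) disprove Conjecture~\ref{conj:vdgk} for $m=n=3$, (ii) prove the weaker Conjecture~\ref{conj:hat} ($\widehat{R}_{s/s,2}=2s$) for $s\le 4$ via uniqueness arguments in the SDIRK stability-function class $\widehat{\Pi}_{s/s,2}$, and (iii) thereby confirm Conjecture~\ref{conj:fs} for $s\le 4$. None of these results touches fully implicit methods, because the implication ``Conjecture~\ref{conj:hat} $\implies$ Conjecture~\ref{conj:fs}'' goes through the SDIRK restriction and does not extend to arbitrary $A$; the only partial IRK-specific observation in the paper is the trace inequality of Lemma~\ref{lemma2.12aboutthetrace} leading to the coefficient constraint \eqref{non-linear relation origin} under $A\ge 0$, and the authors are explicit that they could not close the gap between that constraint and the conjecture (see the discussion surrounding \eqref{transition1}--\eqref{transition2}). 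So your closing paragraph is the right verdict: there is nothing here to compare to, and you should not expect to find a proof in the paper, only the negative result that the stability-function bound does not suffice and the positive results for the SDIRK subclass with $s\le4$.
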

It is thus very surprising that, as we will see, Conjecture
\ref{conj:vdgk}---and thus equality (\ref{2sbound})---\textit{does not hold}
for $m=n=3$. An
immediate consequence is that Conjecture \ref{conj:kmg}
cannot be proved by analyzing $R_{s/s,2}$. A weaker conjecture that
arises naturally is
\begin{conj}\label{conj:hat}
For each $s\ge 1$, $\widehat{R}_{s/s,2} = 2s$.
\end{conj}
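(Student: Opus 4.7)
The plan is to prove both inequalities $\widehat{R}_{s/s,2}\ge 2s$ and $\widehat{R}_{s/s,2}\le 2s$. For the lower bound I would exhibit the Hermite--Obreschkoff approximation $\psi_s(z):=\phi(z)^s$ with $\phi(z):=(2s+z)/(2s-z)=-1+4s/(2s-z)$. A short Taylor computation gives $\phi(z)=e^{z/s}+\Order(z^3)$, hence $\psi_s(z)=e^z+\Order(z^3)$ and $\psi_s\in\widehat{\Pi}_{s/s,2}$ (with $a=1/(2s)$). Absolute monotonicity on $[-2s,0]$ follows from $\phi(-2s)=0$, $\phi(z)\ge 0$ on $[-2s,0]$, and $\phi^{(k)}(z)=4s\cdot k!/(2s-z)^{k+1}>0$ on $[-2s,0]$ for each $k\ge 1$; so $\phi$ is absolutely monotonic on $[-2s,0]$. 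Since products of absolutely monotonic functions are absolutely monotonic (the Leibniz rule produces only nonneg summands), $\psi_s=\phi^s$ is absolutely monotonic on $[-2s,0]$, yielding $R(\psi_s)\ge 2s$.

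For the upper bound, write $\psi=P/(1-az)^s\in\widehat{\Pi}_{s/s,2}$ and split on the sign of $a$. If $a=0$ then $\psi$ is a polynomial of degree $\le s$ approximating $\exp$ to order $2$, and the classical polynomial bound of \cite{kp} already gives $R(\psi)\le s<2s$. If $a<0$, expand $\psi=\beta_0+\sum_{j=1}^{s}\beta_j(1-az)^{-j}$ and observe $\psi^{(k)}(0)=a^k\sum_{j}\beta_j(j+k-1)!/(j-1)!$; the alternating sign of $a^k$ combined with the large-$k$ dominance of the $\beta_s$ term forces $\beta_s=0$, and iterating gives $\beta_j=0$ for every $j\ge 1$, contradicting the order-$1$ condition $\sum_j j\beta_j=1/a\ne 0$. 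Hence $R(\psi)=0$ when $a<0$. In the principal case $a>0$, setting $u:=1/(1+ar)\in(0,1]$, absolute monotonicity on $[-r,0]$ is equivalent to $\beta_0+\sum_{j=1}^{s}\beta_j u^j\ge 0$ together with
\[
(E_k)\colon\quad \sum_{j=1}^{s}\binom{j+k-1}{k}\beta_j\,u^j\;\ge\;0\qquad (k=1,2,3,\ldots),
\]
while the three order-$2$ conditions read $\beta_0+\sum_j\beta_j=1$, $\sum_j j\beta_j=1/a$, $\sum_j j(j+1)\beta_j=1/a^2$. The goal is to show these are jointly infeasible whenever $r>2s$ (equivalently $u<1/(1+2sa)$), by constructing a nonneg combination of finitely many $(E_k)$ together with a signed combination of the three equality constraints that evaluates to a positive multiple of $(2s-r)$.

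The extremal $\psi_s=\phi^s$ from the lower-bound step -- for which $a=1/(2s)$, the partial-fraction coefficients are $\beta_j=\binom{s}{j}2^j(-1)^{s-j}$, $u=1/2$ at $r=2s$, and $\psi_s$ has a zero of order $s$ at $z=-2s$ so that $(E_k)$ is tight precisely for $k=0,1,\ldots,s-1$ -- identifies which inequalities become active at the extremum and should guide the selection of multipliers in the certificate. I expect the principal obstacle to be constructing this certificate uniformly in $s$: after the three equality constraints the feasible set is $(s-1)$-dimensional and the family $(E_k)$ is infinite. For small $s$ direct elimination suffices (e.g., for $s=2$ the binding constraint $\beta_1+2\beta_2 u\ge 0$ combined with the order conditions gives $r\le 1/(1-3a)$, and optimization over $a\in(0,1/3)$ yields $r\le 4$ with equality at $a=1/4$). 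For general $s$ the likely routes are either a compactness/variational argument showing that any maximizer must satisfy $\psi^{(k)}(-R(\psi))=0$ for $k=0,\ldots,s-1$ -- thereby forcing it to coincide with $\phi^s$ and giving $R(\psi)=2s$ -- or an explicit algebraic identity linking $\psi^{(0)}(-r),\ldots,\psi^{(s)}(-r)$ and the three order conditions to the factor $(2s-r)$.
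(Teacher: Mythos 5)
Be aware first of the status of this statement in the paper: it is Conjecture~1.4, and the paper does \emph{not} prove it in full generality. Theorem~2.2 establishes $\widehat{R}_{s/s,2}=2s$ only for $1\le s\le 4$ (the cases $s\le 2$ being previously known, and $s=3,4$ proved in Sections~6.3, 7 and 9.2); the case $s\ge 5$ is left open. Your proposal is also incomplete for general $s$, and---to its credit---says so explicitly. So you should not think of either as a complete proof; the honest comparison is between your partial plan and the paper's finite-$s$ proof strategy.

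With that caveat, your plan lines up well with the paper's. The lower-bound construction $\psi_s=\phi^s$ with $\phi(z)=(2s+z)/(2s-z)$ is exactly the function in the paper's formula~\eqref{opt_f}, and your Leibniz-rule argument for absolute monotonicity on $[-2s,0]$ is correct. The exclusions $a=0$ (via the polynomial bound $R\le s-1$ from \cite{kp}) and $a<0$ match the paper's Lemma~\ref{a_pos}; your $a<0$ argument using the alternating sign of $a^k$ in $\psi^{(k)}(0)$ is a more elementary route than the paper's appeal to \cite[Corollary~3.4]{vdgk}, and it also sidesteps the Assumption~\ding{193} bookkeeping that the paper must do there. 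For the main case $a>0$, your reformulation in the variable $u=1/(1+ar)$ with conditions $(E_k)$ and the three order constraints is equivalent to the paper's strategy of imposing $\psi^{(k)}(-2s)\ge 0$ for $k=0,\dots,s-1$ together with the limit condition~\eqref{limitcondition} (the leading $k$-coefficient of \eqref{psigeneralderivative}), though you work in the partial-fraction basis while the paper uses the Taylor coefficients of $P$ at $-2s$. The paper explicitly remarks (Section~2.5.4) that the partial-fraction route you propose is an alternative it considered and that coefficients in a ``falling'' $k$-basis might be simpler.

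Two smaller points. First, your $s=2$ sketch is incomplete as written: the inequality $r\le 1/(1-3a)$ from $(E_1)$ alone does \emph{not} yield $r\le 4$ after optimizing $a\in(0,1/3)$ (the right-hand side is unbounded there). You also need $\psi(-r)\ge 0$ (the $(E_0)$ condition), whose numerator at $r=4$ is $16a^2-24a+5$, forcing $a\le 1/4$; then $r\le 1/(1-3a)\le 4$ follows. This is exactly how the paper's second proof of Lemma~\ref{lemma61.rhat2/2,2=4} proceeds. Second, you correctly identify the genuine obstruction for general $s$: building a uniform infeasibility certificate. The paper's answer for $s=3,4$ is a hand-tailored elimination of the linear parameters ($c$, $d$) with case-splits on the parameter $a$, followed by choosing specific integers $k$ (e.g.\ $k=54$, $k=77$ in the $s=4$ proof) at which the inequality $\psi^{(k)}(-8)\ge 0$ is violated. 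That is precisely a certificate of the kind you describe, but the paper offers no mechanism to produce it uniformly in $s$, which is why the conjecture remains open. Your two suggested routes (a variational argument forcing $\psi^{(k)}(-R(\psi))=0$ for $k=0,\dots,s-1$ at any maximizer, or an explicit algebraic identity) are reasonable, but neither is developed in the paper beyond the remarks in Section~2.5; Lemmas~\ref{boundson}--\ref{lemma2.5} come closest to a structural insight but only apply for $3\le s\le 8$ and under the hypothesis $P^{(k)}(-2s)\ge 0$ for $k\le s$, which the paper itself notes is not obviously implied by $\psi^{(k)}(-2s)\ge 0$ for all $k$.
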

Conjectures \ref{conj:fs}, \ref{conj:kmg}, and \ref{conj:hat} were previously proved only in
the cases $s=1$ or $s=2$.

We have presented these conjectures in the order they were formulated. By also
taking into account that

$\bullet$ $\widehat{R}_{s/s,p}\le \widehat{R}_{s/s,2}\le R_{s/s,2}$ ($p\ge 2$),

$\bullet$ $R(\psi^{A,b})\le \widehat{R}_{s/s,p}$ for $A,b$ corresponding to an SDIRK method,

$\bullet$ there exists an SDIRK method (consisting of the repetitions of the
            implicit midpoint method) with $R(A,b)=2s$ (\cite[formula (3.1)]{fs}) whose
            stability function
 \[
\psi^{A,b}(z)=\frac{\left(1+\frac{z}{2s}\right)^s}{\left(1-\frac{z}{2s}\right)^s}, \quad \psi^{A,b}\in \widehat{\Pi}_{s/s,2}\subset \Pi_{s/s,2}\]
satisfies $R(\psi^{A,b})=2s$,
we have the following implications for each value of $s$:

\begin{center}
Conjecture \ref{conj:vdgk} with $m=n=s$ $\implies$ Conjecture \ref{conj:hat} $\implies$ Conjecture \ref{conj:fs}
\end{center}
and
\begin{center}
Conjecture \ref{conj:vdgk} with $m=n=s$ $\implies$ Conjecture \ref{conj:kmg} $\implies$ Conjecture \ref{conj:fs}.
\end{center}

\indent As for the $p=3$ case, we pose the following new conjecture, which is stronger than \cite[Conjecture 3.2]{fs}.

\begin{conj}\label{conj:hatp3}
For each $s\ge 2$, $\widehat{R}_{s/s,3} = s-1+\sqrt{s^2-1}$.
\end{conj}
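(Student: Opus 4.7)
I would attack the conjecture by establishing the matching lower and upper bounds $\widehat{R}_{s/s,3}\ge s-1+\sqrt{s^2-1}$ and $\widehat{R}_{s/s,3}\le s-1+\sqrt{s^2-1}$. The common setup is the parametrization $\psi = P/(1-az)^s$ with $P(z)=\sum_{j=0}^s p_j z^j$: the order-three requirement $P(z) - (1-az)^s e^z = \mathcal O(z^4)$ uniquely determines $p_0,p_1,p_2,p_3$ as explicit polynomials in $a$, while $p_4,\dots,p_s$ remain free when $s\ge 4$. For $s=2$ the extra equation $p_3=0$ reduces $a$ to one of the two Crouzeix values $(3\pm\sqrt{3})/6$. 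The main analytic tool, valid when $a>0$ and $r<1/a$, is that $\psi$ is absolutely monotonic on $[-r,0]$ if and only if every Taylor coefficient $c_n(r,a,p_4,\dots,p_s) := [w^n]\psi(w-r)$ is non-negative; this holds because the pole $1/a$ lies beyond the right endpoint of $[-r,0]$, so the Taylor series at $-r$ converges throughout the interval. The degenerate cases $a\le 0$ require a separate, easier argument: the pole, if present, is at $1/a\le 0$, forcing $R(\psi)\le 1/|a|$, which is checked to be strictly less than the conjectured value.

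\textbf{Lower bound.} For $s=2$, a direct computation at $a=(3-\sqrt{3})/6$ gives $\psi(-R)>0$, $\psi'(-R)=0$, and $\psi^{(k)}(-R)>0$ for $k\ge 2$ at $R=1+\sqrt{3}$, after which non-negativity of the full Taylor expansion of $\psi(w-R)$ at $w=0$ confirms $R(\psi)=1+\sqrt{3}$. For $s\ge 3$, by analogy with the $p=2$ extremal $((1+z/(2s))/(1-z/(2s)))^s$ whose numerator has an $s$-fold zero at $-R$, I would try the ansatz $P^*(z)=(1+z/r^*)^{s-1}L(z)$ with $L$ linear and $r^*=s-1+\sqrt{s^2-1}$, then solve the four order-three equations for $a$ and the coefficients of $L$. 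The remaining free parameters $p_4,\dots,p_s$ should be chosen so that additional derivatives $\psi^{*(k)}(-r^*)$ vanish, using the growing flexibility as $s$ increases. Verification that $R(\psi^*)\ge r^*$ then amounts to checking non-negativity of the $c_n(r^*,a,p_4,\dots,p_s)$, which reduces to explicit identities in sums of binomials $\binom{s+n-j-1}{n-j}$.

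\textbf{Upper bound.} For a general $\psi\in\widehat{\Pi}_{s/s,3}$, after multiplying by $(1+ar)^s>0$ and substituting the order-three values of $p_0,\dots,p_3$, each $c_n$ becomes a polynomial in $(r,a,p_4,\dots,p_s)$. The strategy is to exhibit a finite collection of indices $n_1,\dots,n_N$ and non-negative multipliers $\lambda_i$ such that $\sum_i \lambda_i c_{n_i}$ simplifies to a product of a manifestly non-negative factor and $\bigl[2(s-1)(r+1)-r^2\bigr]$. Since each $c_{n_i}\ge 0$, this forces $r^2\le 2(s-1)(r+1)$, and the positive root of this quadratic is exactly $s-1+\sqrt{s^2-1}$.

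\textbf{Main obstacle.} The principal difficulty is twofold. First, one must identify which finite subset of the infinitely many inequalities $c_n\ge 0$ is active at the extremum for each $s$; numerical experiments in the spirit of \cite{vdgk} should suggest the answer, but extracting a closed-form description that works for all $s$ is not obvious. Second, treating the free parameters $p_4,\dots,p_s$ uniformly in $s$ is essentially a quantifier-elimination problem: for small $s$ (say $s\le 5$) cylindrical algebraic decomposition should suffice, but a general proof likely requires an algebraic-combinatorial insight—perhaps an induction on $s$, or the discovery of a compact generating-function identity for the $c_n$—that packages the infinitely many inequalities into the single Positivstellensatz-type certificate described above.
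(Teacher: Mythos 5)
The statement you are attempting to prove is a \emph{conjecture} in the paper, established there only for $2\le s\le 4$ (the set $\widehat{\Pi}_{2/2,3}$ is finite and is handled directly in Section~\ref{sectionSDIRKs2s3}; $\widehat{R}_{3/3,3}=2+\sqrt{8}$ is proved in Section~\ref{sectionSDIRKs3p3} by one-parameter elimination; $\widehat{R}_{4/4,3}=3+\sqrt{15}$ is proved in Section~\ref{section8.3} by the uniqueness-type argument of Section~\ref{sectionSDIRKs3p2}). There is no paper proof of the general statement to compare against, so the most useful comparison is against the paper's evidence for small $s$ and against what the paper records about the extremizer's structure.

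Your lower-bound ansatz contains a concrete error. Setting $P^*(z)=(1+z/r^*)^{s-1}L(z)$ forces the numerator of $\psi^*$ to vanish to order $s-1$ at $-r^*$, hence $\psi^*(-r^*)=\psi^{*\prime}(-r^*)=\cdots=\psi^{*(s-2)}(-r^*)=0$; that is, you impose $\ell=0,1,\ldots,s-2$. The paper's Table~\ref{tableofSDIRKoptimalvalues} shows that for $p=3$ the derivatives that actually vanish at $-\widehat{R}_{s/s,3}$ are $\ell=1,\ldots,s-1$ (and \emph{not} $\ell=0$: $\psi(-r^*)>0$), while Table~\ref{tableSDIRKstructures} exhibits the extremizer as having an $(s-1)$-fold zero in the numerator of $\psi'$, not of $\psi$. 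Concretely, for $(s,3)$ the paper gives $\psi'(z)=\left(1+\tfrac{\sqrt{s^2-1}-(s-1)}{2(s-1)}z\right)^{s-1}\big/\left(1+\tfrac{\sqrt{s^2-1}-s}{2s}z\right)^{s+1}$, whose numerator has the $(s-1)$-fold root at $-r^*$. The $p=2$ analogy, where the numerator of $\psi$ itself carries the maximal-multiplicity root at $-2s$, does not transfer; for $p=3$ the multiple zero moves up to the derivative. Your construction would need to impose $\psi^{*(k)}(-r^*)=0$ for $k=1,\ldots,s-1$ instead, and the $a$ value this yields is $a=\tfrac12\bigl(1-\sqrt{(s-1)/(s+1)}\bigr)$ (Remark~\ref{remark2.18onoptimalconjecturedavalues}), which is not what your ansatz produces.

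On the upper bound, your observation that the positive root of $r^2=2(s-1)(r+1)$ is exactly $s-1+\sqrt{s^2-1}$ is correct and is a nice target identity, and the ``positive combination of Taylor coefficients $c_n$ that factors through $2(s-1)(r+1)-r^2$'' program is reasonable. But that certificate is exactly what is missing: the coefficients $c_n$ depend on $a,p_4,\ldots,p_s$, so the multipliers $\lambda_i$ would generally have to be chosen as functions of those parameters, and exhibiting such a certificate uniformly in $s$ is the open problem. The paper itself handles each proven case by a separate, tailored elimination (single parameter for $s=3$, two parameters with a quantifier-elimination-style case analysis for $s=4$), precisely because no uniform Positivstellensatz-type argument is available. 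As written, your proposal identifies the right target but has a genuinely wrong ansatz on the constructive side and no new mechanism on the obstruction side.
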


 The present work is devoted to determining the exact values of $R_{s/s,p}$ and
 $\widehat{R}_{s/s,p}$ for certain $2\le s\le 4$ and $2\le p \le 7$ pairs.
 Our results lend some support to Conjectures \ref{conj:fs} and \ref{conj:hat}, 
 since we prove each of them for the cases $s=3$ and $s=4$.
We also prove Conjecture \ref{conj:hatp3} for $s\le 4$ (simultaneously proving the $3\le s \le 4$ special cases of \cite[Conjecture 3.2]{fs}).\\ 

The structure of the paper is as follows.

In Section \ref{earlierresultssection} we recall some 
theorems from  \cite{vdgk,kc} on which our computations are
based.  

All of our new results are overviewed and interpreted in Section
\ref{sectionmainresults}; the proofs are given only in Sections
\ref{sectionIRKs2}--\ref{appendixsection}. Section
\ref{sectionmainresults} has been written so that the casual reader need not
refer to any later sections.  In Section
\ref{furtherresultssection}, we
give some auxiliary, but, in our opinion, related and interesting results that we could not (yet) tie to the main pieces of the
puzzle (\textit{e.g.,} to Conjecture \ref{conj:hat}), along with a few remarks
about the successful (or failed) proof attempts and techniques.  In Section \ref{notationsection}, we introduce some compact
notation for the algebraic numbers that will be prevalent in this work. 

The interested reader will find the actual proofs of our theorems in Sections \ref{sectionIRKs2}--\ref{appendixsection}. Generally, we proceed from smaller $s$ values to larger ones, and if $s$ is fixed, from larger $p$ values to smaller ones, \textit{i.e.,} sections are ordered roughly in increasing difficulty within both the $\Pi_{s/s,p}$ and in the $\widehat{\Pi}_{s/s,p}$ classes. These sections represent the fruits of several dozens of pages of computations---or of a few hundred pages, depending on the level of detail---so we had inevitably to omit some parts of some proofs. In Section \ref{appendixsection}, a few more algebraic expressions, mentioned only implicitly in the proofs, are collected to enable the reproducibility of certain longer computations.

Let us close this introduction with a remark explaining why the $p=1$ case is exceptional.

\begin{rem}[(on the $p=1$ case)] For $\psi\in\Pi_{m/n,p}$ with $p\ge 2$, we have $R(\psi)<+\infty$ \cite{spijker1983}. 
On the other hand, for $\psi(z) = 1/(1-z)$,  we have $\psi \in \Pi_{0/1,1}$ and $R(\psi)=\infty$.
One may ask whether there exists $\psi \in \Pi_{s/s,1}$ such that $R(\psi)=\infty$ and $\psi \notin \Pi_{(s-1)/(s-1),1}$
(and correspondingly for the sets $\widehat{\Pi}_{s/s,1}$).  The answer is
affirmative.  For any $s\in \mathbb{N}^+$ define
\[
\widehat{\psi}_s(z)=1-\frac{1}{2 s}+\frac{1}{2 s (1-2 z)^{s}},
\]
and for any $2\le s\in \mathbb{N}$, consider 
\[
\psi_s(z)=\frac{2^s-2}{2^s+1}+\frac{3}{4^{s}-1}\sum_{m=1}^s \frac{1}{2^{1-m}-z}.
\] 
It is easily seen that both
$\widehat{\psi}_s$ and $\psi_s$ can be represented as  rational functions with
numerator and denominator having degree {\rm{exactly}} $s$.  Moreover,
$\widehat{\psi}_s(0)=\widehat{\psi}^\prime_s(0)=1=\psi_s(0)=\psi^\prime_s(0)$,
so $\widehat{\psi}_s\in\widehat{\Pi}_{s/s,1}$ and
$\psi_s\in\Pi_{s/s,1}\setminus \widehat{\Pi}_{s/s,1}$ (of course,
$\widehat{\psi}_1\in \widehat{\Pi}_{1/1,1} = \Pi_{1/1,1}$). Direct
differentiation shows that for all $k\in\mathbb{N}$ and $x\le 0$ we have
$\widehat{\psi}_s^{(k)}(x)\ge 0$ and $\psi_s^{(k)}(x)\ge 0$, hence
$R(\widehat{\psi}_s)=R(\psi_s)=+\infty$. Consequently, we are going to consider
only the $p\ge 2$ case in this work.  
\end{rem}


\subsection{Some general results on the radius of absolute monotonicity}\label{earlierresultssection}

Let us briefly summarize some useful theorems from \cite{vdgk,kc} that will frequently be used in this work. In \cite{vdgk}, the following assumptions are made on the rational function $\psi$:

\begin{description}
\item{\ding{192}\quad}  $\psi=\frac{P}{Q}$, where $P\in\Pi_m$ and $Q\in\Pi_n$ with some $m, n\in\mathbb{N}$, but $\psi$ is not a polynomial,
\item{\ding{193}\quad} $P$ and $Q$ have no common roots,
\item{\ding{194}\quad} $P(0)=Q(0)=1$.
\end{description}

\begin{rem}  We will see that removing these assumptions (i.e., not
excluding removable singularities {\em a priori}, or considering, when setting up the form of
the families of rational functions, the case $P(0)=Q(0)=0$ as well with
interpreting $\psi(0)=1$ as $\lim_0 \psi=1$ in the order conditions) does not
make a difference in the optimal values $R_{s/s,p} $ and $\widehat{R}_{s/s,p}$ 
in the classes we are going to consider.  Nevertheless, these assumptions are 
convenient in allowing us to immediately use results from \cite{vdgk}.
\end{rem}

\begin{defn}[({\cite[Definition 3.2]{vdgk}})]\label{Bdef}
Suppose that $\psi$ satisfies  Assumptions \ding{192}$-$\ding{194} above. Let $A^+(\psi)$ denote the set of poles of $\psi$ with non-negative imaginary part. If $\alpha\in A^+(\psi)$, we set \[I(\alpha):=\{ x\in\mathbb{R} : \alpha\in A^+(\psi)   \mathrm{\ is\ the\ unique\ pole\ closest \ to\  } x \}.\] 
The disjoint union of these intervals is the set $\mathbb{R}$ with only finitely many exceptions. Now we let
\[ 
B(\psi):=  \begin{cases}
-\inf I(\alpha_0) & \text{if } 0\in I(\alpha_0) \text{ for some positive real pole } \alpha_0\in A^+(\psi)\cap (0,+\infty), \\
0 & \text{otherwise.}
\end{cases}
\]
\end{defn}

Note that $B(\psi)$ is determined solely by the location of the poles of $\psi$. The significance of this quantity is highlighted by the following theorem.


\begin{thm}[({\cite[Theorem 3.3]{vdgk})}]\label{vdgkTheorem3.3}
Suppose that $\psi$ satisfies  Assumptions  \ding{192}$-$\ding{194} above.  Then \[R(\psi)\le B(\psi).\]
\end{thm}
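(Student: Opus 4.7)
My plan hinges on Pringsheim's theorem: a power series $\sum_{k\ge 0} a_k w^k$ with $a_k\ge 0$ is singular at the point on the positive real axis equal to its radius of convergence. I would apply this to the Taylor expansion of $\psi$ centered at various $x_0\in[-R(\psi),0]$.

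If $\psi$ is absolutely monotonic at $x_0$, the Taylor coefficients $\psi^{(k)}(x_0)/k!$ are non-negative by definition; since $\psi$ is rational and not a polynomial (Assumption \ding{192}), the radius of convergence at $x_0$ equals $d(x_0):=\min_{\alpha}|x_0-\alpha|$ taken over the poles $\alpha$ of $\psi$. Pringsheim's theorem then forces a pole of $\psi$ at the real point $x_0+d(x_0)>x_0$. Summarising: \emph{if $\psi$ is absolutely monotonic at $x_0$, then some real pole of $\psi$ strictly greater than $x_0$ lies among the poles closest to $x_0$.}

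I would next track which pole plays this role as $x_0$ descends from $0$. At $x_0=0$ the real pole must be the smallest positive real pole $\alpha_0$ of $\psi$, since any other positive real pole is farther from $0$. If no positive real pole $\alpha$ satisfies $0\in I(\alpha)$ (so $B(\psi)=0$), then either no positive real pole exists, or $0$ is a tie point at which the pole tied with $\alpha_0$ is necessarily either complex with modulus $\alpha_0$ or equal to $-\alpha_0$; a short distance computation shows that in every sub-case the competing pole becomes strictly closer to $x$ than $\alpha_0$ for every $x<0$, so Pringsheim's real-pole requirement fails throughout an immediate left neighborhood of $0$, and $R(\psi)=0=B(\psi)$. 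Otherwise $0\in I(\alpha_0)$, and $\alpha_0$ remains the unique nearest pole to $x_0$ throughout the open interval $I(\alpha_0)=(\inf I(\alpha_0),\sup I(\alpha_0))$, consistent with Pringsheim.

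For $x_0$ slightly less than $\inf I(\alpha_0)=-B(\psi)$, the unique nearest pole switches to some $\beta_0\ne\alpha_0$ (tied with $\alpha_0$ at $x_0=-B(\psi)$). Using $0\in I(\alpha_0)$, I would argue that $\beta_0$ cannot be a real pole $>x_0$: any positive real pole other than $\alpha_0$ is farther from $x_0$ than $\alpha_0$; any negative real pole $\gamma$ satisfies $\gamma<-\alpha_0$ (forced by $0\in I(\alpha_0)$), and a real tied $\beta_0$ is pinned by the equidistance equation $|\beta_0+B(\psi)|=\alpha_0+B(\psi)$ to the value $-\alpha_0-2B(\psi)$, which is strictly less than $x_0$ for $x_0$ near $-B(\psi)$; complex poles are non-real. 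Hence Pringsheim's condition fails for every $x_0$ in a small left neighborhood of $-B(\psi)$, $\psi$ is not absolutely monotonic there, and $R(\psi)\le B(\psi)$. The main obstacle I anticipate is this last geometric casework---especially ruling out that additional poles of $\psi$ coincide with Pringsheim's required real pole at $x_0+d(x_0)$ on a positive-measure set of $x_0$; this is handled by noting that each fixed pole of $\psi$ produces at most one such coincident $x_0$.
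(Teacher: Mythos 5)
The paper does not prove this statement itself: it is cited verbatim as Theorem~3.3 of the reference \cite{vdgk}, so there is no in-paper proof to compare against. Taking your proposal on its own merits: the Pringsheim-based strategy is the natural one for this result, and it works. The core observation — that absolute monotonicity of $\psi$ at $x_0$ forces the Taylor series at $x_0$ to have non-negative coefficients, hence Pringsheim's theorem produces a pole of $\psi$ at the real point $x_0+d(x_0)$, so the nearest pole(s) to $x_0$ must include a real pole strictly greater than $x_0$ — is exactly right, and it correctly combines with Assumption \ding{192} (so that $\psi$ has at least one pole and the radius of convergence is finite) and Assumption \ding{193} (so no removable singularities interfere with identifying singularities with poles).

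Two small things to tighten. First, in the $B(\psi)=0$ branch you only enumerate ``no positive real pole'' and ``$0$ is a tie point for $\alpha_0$,'' but there is a third sub-case: some non-real or negative pole $\beta$ is \emph{strictly} closer to $0$ than the smallest positive real pole $\alpha_0$. That sub-case is even easier — Pringsheim already fails \emph{at} $x_0=0$, since $d(0)<\alpha_0$ while no positive real pole lies in $(0,\alpha_0)$ — but it should be mentioned. Second, your closing worry about ``additional poles coinciding with the required pole at $x_0+d(x_0)$ on a positive-measure set'' is unnecessary: away from the finitely many tie points the closest pole in $A^+(\psi)$ is \emph{unique}, so if $x_0+d(x_0)$ were a pole it would itself be a nearest pole and hence would have to \emph{equal} the unique nearest one — which, by the geometry you already carried out, is impossible when that nearest pole is non-real or lies to the left of $x_0$. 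So the uniqueness built into Definition~\ref{Bdef} already rules out the coincidence; no extra ``at most one $x_0$ per pole'' argument is needed. With those adjustments the argument is complete and correct.
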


\begin{thm}[({\cite[Corollary 3.4]{vdgk}})]\label{vdgkCorollary3.4}
Suppose that $\psi$ satisfies  Assumptions  \ding{192}$-$\ding{194} and $\psi$ has no positive real pole. Then $R(\psi)=0$.
\end{thm}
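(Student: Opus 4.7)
The plan is to deduce this corollary directly from Theorem \ref{vdgkTheorem3.3} by showing $B(\psi)=0$ whenever $\psi$ has no positive real pole. From there, the chain $0 \le R(\psi) \le B(\psi) = 0$ closes: the upper bound is the content of Theorem \ref{vdgkTheorem3.3}, while the lower bound is forced by the inclusion of $\{0\}$ in the set defining the supremum $R(\psi)$.

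The key step, then, is the evaluation of $B(\psi)$ via Definition \ref{Bdef}. The hypothesis ``$\psi$ has no positive real pole'' immediately yields $A^+(\psi)\cap(0,+\infty) = \emptyset$, since every positive real number has non-negative (in fact zero) imaginary part and so would automatically belong to $A^+(\psi)$ if it were a pole of $\psi$. Consequently no candidate $\alpha_0$ satisfying the conditions of the first branch of the definition exists, and one is thrown into the ``otherwise'' case, yielding $B(\psi)=0$.

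The main obstacle is effectively absent: the corollary is a one-line unpacking of the piecewise definition of $B(\psi)$ combined with the bound already provided by the previous theorem. The only point requiring a little care is recognizing that the first branch of the definition is a conjunction of two conditions (``$\alpha_0$ is a positive real pole'' and ``$0 \in I(\alpha_0)$''), so failure of the first condition alone already triggers the ``otherwise'' case, independently of the precise structure of the interval decomposition $\{I(\alpha)\}_{\alpha \in A^+(\psi)}$ near the origin. Assumption \ding{192} (that $\psi$ is not a polynomial) guarantees $A^+(\psi)$ is non-empty, but this is not needed for the argument above.
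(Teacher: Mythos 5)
Your proof is correct. Note that the paper itself does not prove this statement but cites it directly from \cite[Corollary~3.4]{vdgk}; your derivation --- reading off $B(\psi)=0$ from the ``otherwise'' branch of Definition~\ref{Bdef} when $A^+(\psi)\cap(0,+\infty)=\varnothing$, then sandwiching $0\le R(\psi)\le B(\psi)=0$ via Theorem~\ref{vdgkTheorem3.3} --- is exactly the natural deduction the labeling ``Corollary~3.4'' following ``Theorem~3.3'' in the reference suggests, and it is airtight.
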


Additionally, the negative real roots of the derivatives of $\psi$ form upper bounds on $R(\psi)$, as shown by the next theorem.

\begin{thm}[({\cite[Lemma 4.5]{vdgk}})]\label{vdgkLemma4.5}
Suppose that $\psi$ satisfies Assumptions  \ding{192}$-$\ding{194}, and $\psi^{(\ell)}(x)=0$ for some $0\ge x\in\mathbb{R}$ and $\ell\in \mathbb{N}$. Then $R(\psi)\le -x$.
\end{thm}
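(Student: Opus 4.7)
The plan is a short proof by contradiction, exploiting the fact that if $\psi$ is absolutely monotonic on an interval, then the values of $\psi^{(\ell)}$ and $\psi^{(\ell+1)}$ control each other monotonically there.

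First I would suppose, for contradiction, that $R(\psi) > -x$. By definition of $R(\psi)$, there would then exist some $r$ with $R(\psi) \ge r > -x$ such that $\psi$ is absolutely monotonic on $[-r,0]$. In particular $\psi$ is $C^\infty$ (no poles) on this interval, and for every $k\in\mathbb{N}$ and every $y\in[-r,0]$ one has $\psi^{(k)}(y)\ge 0$. Note that the hypothesis $x\le 0$ combined with $-r<x$ places $x$ in the interior of $[-r,0]$ (or at the right endpoint if $x=0$), and the sub-interval $[-r,x]$ has positive length $r+x>0$.

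Next I would apply this to $k=\ell$ and $k=\ell+1$. Since $\psi^{(\ell+1)}\ge 0$ on $[-r,0]$, the function $\psi^{(\ell)}$ is non-decreasing on $[-r,0]$. Combined with $\psi^{(\ell)}(x)=0$, this forces $\psi^{(\ell)}(y)\le 0$ for all $y\in[-r,x]$. But absolute monotonicity also gives $\psi^{(\ell)}(y)\ge 0$ on $[-r,0]$, so $\psi^{(\ell)}\equiv 0$ on the non-degenerate interval $[-r,x]$.

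Finally I would invoke rigidity of rational functions: $\psi$, being rational with no poles on $[-r,0]$, is real analytic on an open neighbourhood of that interval. A real analytic function vanishing on a set with a limit point vanishes on its connected open domain, and, by the identity theorem for rational functions, $\psi^{(\ell)}$ must then be the zero rational function. Hence $\psi$ is a polynomial of degree less than $\ell$, contradicting Assumption \ding{192}, which states that $\psi$ is not a polynomial. The contradiction gives $R(\psi)\le -x$.

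The only subtlety — and the step where I would be most careful — is the rigidity argument at the end: one must verify that $\psi$ is honestly analytic on a neighbourhood of $[-r,0]$ (which follows from the absence of poles there, itself implied by absolute monotonicity) and that passing from "$\psi^{(\ell)}\equiv 0$ on a real sub-interval" to "$\psi$ is a polynomial" uses the rational (equivalently, globally meromorphic) structure rather than just smoothness. Everything else is elementary monotonicity bookkeeping.
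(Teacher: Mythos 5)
Your proof is correct. Since the paper cites this result directly as \cite[Lemma 4.5]{vdgk} and does not reproduce a proof, there is nothing in the paper itself to compare against; but the argument you give is sound and self-contained. The contradiction setup is handled properly: if $R(\psi)>-x$, then by the supremum definition of $R(\psi)$ there is indeed some $r$ with $-x<r\le R(\psi)$ for which $\psi$ is absolutely monotonic on all of $[-r,0]$, and $[-r,x]$ then has positive length. The monotonicity step (non-decreasing $\psi^{(\ell)}$ trapped between $0$ and $\psi^{(\ell)}(x)=0$ on $[-r,x]$) forces $\psi^{(\ell)}\equiv 0$ on a non-degenerate interval, and you are right to flag the rigidity step as the place where the rational structure matters. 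The chain is: absolute monotonicity on $[-r,0]$ implies no poles there, hence $\psi$ is real-analytic on a neighbourhood; $\psi^{(\ell)}$ is a rational function vanishing on an interval, hence is the zero rational function (a rational function is determined by its values on any infinite set); if $\ell\ge 1$ one then concludes recursively (e.g.\ from $P'Q-PQ'=0$ with $\gcd(P,Q)=1$) that $\psi$ is a polynomial of degree $<\ell$, contradicting Assumption \ding{192}, while for $\ell=0$ one gets $\psi\equiv 0$, contradicting Assumption \ding{194}. It would be worth spelling out this last split on $\ell$, since Assumption \ding{192} (``not a polynomial'') only bites when $\ell\ge 1$, but that is a minor presentational point and the argument is essentially complete.
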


We will also use the fact that under certain assumptions absolute monotonicity at the left endpoint of  an interval implies absolute monotonicity on the whole interval.

\begin{thm}[({\cite[Lemma 3.1]{kc}})] \label{absmon_int}
Let $\psi = \frac{P}{Q}$ be absolutely monotonic at some $x<0$, where $P$ and $Q$ are polynomials
and $Q$ has no zeros in $(x,0]$. Then $R(\psi)\ge -x$.
\end{thm}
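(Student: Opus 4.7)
The plan is to show that $\psi^{(k)}(y)\ge 0$ for every $k\in\mathbb{N}$ and every $y\in[x,0]$, since this is precisely what the conclusion $R(\psi)\ge -x$ requires. A preliminary observation is that the hypotheses---absolute monotonicity of $\psi$ at $x$ together with $Q$ having no zeros in $(x,0]$---force $\psi$ to extend to a real-analytic function on an open neighborhood of the compact interval $[x,0]$. Indeed, if $Q(x)=0$, then $\psi(x)$ being finite (a consequence of absolute monotonicity at $x$) forces $P$ to vanish at $x$ to at least the same order, so $x$ is a removable singularity of $P/Q$; at every other point of $[x,0]$, $Q$ is nonzero by hypothesis.

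I would then run a standard maximal-interval argument. Define
\[
S := \bigl\{c\in[x,0] : \psi^{(k)}(y)\ge 0 \text{ for all } k\in\mathbb{N} \text{ and all } y\in[x,c]\bigr\},
\]
and set $c^* := \sup S$. The hypothesis gives $x\in S$, so $c^*$ is well defined and $c^*\ge x$. Continuity of each $\psi^{(k)}$ on $[x,0]$ yields $c^*\in S$: for any $y\in[x,c^*)$, pick some $c\in S$ with $y<c\le c^*$ to get $\psi^{(k)}(y)\ge 0$, and at $y=c^*$ use continuity by letting $y$ tend to $c^*$ from below.

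The heart of the argument is to rule out $c^*<0$. Suppose to the contrary that $c^*<0$. By the preliminary observation, $\psi$ is complex-analytic in a disk of some radius $\rho>0$ around $c^*$ (one may take $\rho$ equal to the distance from $c^*$ to the nearest complex pole of $\psi$). Hence for every $k\in\mathbb{N}$ the Taylor expansion
\[
\psi^{(k)}(y) \;=\; \sum_{j=0}^{\infty}\frac{\psi^{(k+j)}(c^*)}{j!}\,(y-c^*)^j
\]
converges absolutely whenever $|y-c^*|<\rho$. Pick $\epsilon\in(0,\rho)$ small enough that $c^*+\epsilon\le 0$; for any $y\in[c^*,c^*+\epsilon]$ every term of this series is non-negative, since $\psi^{(k+j)}(c^*)\ge 0$ because $c^*\in S$, and $(y-c^*)^j\ge 0$ because $y\ge c^*$. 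It follows that $c^*+\epsilon\in S$, contradicting the maximality of $c^*$. I do not foresee a genuine obstacle here: the only point requiring care is the analyticity of $\psi$ on all of $[x,0]$, which is addressed by the removable-singularity remark above; everything else is routine complex-analytic continuation combined with the termwise non-negativity of the Taylor series at any point of $S$.
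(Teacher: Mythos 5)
This result is not proved in the paper at all: it is stated as a citation to \cite[Lemma 3.1]{kc}, so there is no internal proof to compare against. Your argument is the standard real-analytic continuation proof for absolute monotonicity (maximal-interval set $S$, closure by continuity, advance past $c^*$ via the non-negative Taylor expansion), and it is correct. You have also handled the one genuinely subtle point---the hypothesis only excludes zeros of $Q$ on $(x,0]$, so $Q(x)=0$ is allowed, and you correctly observe that absolute monotonicity of $\psi$ at $x$ (in the paper's sense, where $\psi$ is a smooth real function) forces any such zero of $Q$ to be a removable singularity of $P/Q$, giving analyticity of $\psi$ on a neighborhood of all of $[x,0]$ and a uniformly positive radius of convergence at every point of $[x,0]$. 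The only small stylistic remark is that your choice $\rho=\mathrm{dist}(c^*,\{\text{poles}\})$ silently uses that the Taylor series of $\psi^{(k)}$ about $c^*$ has the same radius of convergence for every $k$ (true for rational, or more generally meromorphic, $\psi$, since differentiating does not change the pole set); this is worth stating explicitly. Overall the proof is complete and, to the best of my knowledge, coincides in spirit with the argument in \cite{kc}.
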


\begin{rem}\label{remarkL(x)}  In \cite{vdgk}, formula (4.3) introduces an auxiliary quantity \[L(x)=\max\left(0,m-n+1,\max\{\ldots\}\right)\] that is used in their algorithm to compute $R(\psi)$ for a given rational function. It may happen however that the $\{\ldots\}$ set above is empty (corresponding to the SDIRK case, for example), when a correct interpretation of this $\max\varnothing$ is $-\infty$ (or, say, $0$).
\end{rem}

\section{Main results}\label{sectionmainresults}
In this section we state the main results and describe the approach of the proofs.  The 
proofs themselves are deferred to Sections \ref{sectionIRKs2}--\ref{appendixsection}.

For a given set $\Pisp$ or $\Pihatsp$, let
$q\equiv q(s,p)$ denote the number of parameters needed to describe the set.
Generically, the class $\Pisp$ can be written
as a family in $q=2s-p$ parameters, while the class $\Pihatsp$ can be written as a family
in $q=s+1-p$ parameters.  In the cases where $q=0$,
these sets contain a finite number of members ("finitely many" is of course understood in the sense of functions, \textit{i.e.,} a normalized representation is chosen: if $\psi=\frac{P}{Q}$ with $\psi(0)=1$, then $P(0)=Q(0)=1$).  Our investigation
is restricted to sets consisting of $q$-parameter families of rational functions with $q=0$ (3 such cases), $q=1$ (5 cases), $q=2$ (2 cases), $q=3$ (1 case), or $q=4$ (a lower bound in 1 case).

\subsection{A lower bound on $R_{3/3,2}$}
In Section \ref{counterexamplesection}, we construct a function $\psi\in\Pi_{3/3,2}$
with $R(\psi) > 6.7783 >6$.  This shows that Conjecture \ref{conj:vdgk} does not hold
for $m=n=3$.
However, the exact value of $R_{3/3,2}$ is still unknown,
because in order to describe all rational functions $\psi$ in $\Pi_{3/3,2}$, we
need 4 parameters, rendering the (exact or numerical) optimization within this
class impossible with our current techniques. In contrast, Theorem
\ref{Thm2.2} in Section \ref{determinationRhatp2s34} asserts that
$\widehat{R}_{3/3,2}=6$.

\subsection{Determination of $\Rsp$ for $s\le4$, $q\le1$}

\subsubsection{Known values of $R_{1/1,p}$ and $R_{2/2,p}$}\label{knownvaluessle4qle1}
For $p\ge 3$, $\Pi_{1/1,p}$ is empty.  The set $\Pi_{1/1,2}$ consists only of
the function $z\mapsto\frac{1+z/2}{1-z/2}$ with $R=2$, so $R_{1/1,2}=2$.  

For $p\ge 5$, $\Pi_{2/2,p}$ is empty.  The set $\Pi_{2/2,4}$
consists of a single function: the $(2,2)$ Pad\'e approximation of the exponential 
function.  This function has $\psi^{(6)}(0)=0$, so by
Theorem \ref{vdgkLemma4.5} we have $R_{2/2,4}=0$ (but see Remarks \ref{remarkIRKobstacle} and \ref{remark2.4} as well).
The same result was deduced in a different way in 
\cite[Theorem 5.1]{vdgk}.

If $\psi=\frac{P}{Q}\in \Pi_{2/2,2}$, and $P$ and $Q$ have no common roots, then \cite[Section 6.3]{vdgk}
says that $R(\psi)\le 4$.  If $P$ and $Q$ have a common root, then it is easily
seen that $\psi\in \Pi_{1/1,2}$ also, hence $R(\psi)\le 2$.  For the function
$\psi(z)=\frac{\left(1+z/4\right)^2}{\left(1-z/4\right)^2}$ we have
$R(\psi)=4$, so $R_{2/2,2}=4$.

The exact value of $R_{2/2,3}$ was previously unknown; it is determined in Section \ref{sectionIRKs2},
thus completing all cases with $1\le s\le 2$.

\subsubsection{New exact values of $\Rsp$}
For the cases in which $p=2s$, the set $\Pisp$ has only one member:
the $(s,s)$ Pad\'e approximation to the exponential.  The value of
$\Rsp$ can be obtained by computing $R(\psi)$ for this unique member; see also Remark \ref{remark2.4}.

For the cases in which $p=2s-1$, the set $\Pisp$ is a one-parameter
family.  In these cases, the method of proof we use is the following procedure (possibly in an iterative manner).

\begin{description}
\item{1.\quad} Conjecture an optimal parameter value $a^*$ by inspection of $B(\psi_a)$ and
            the first several derivatives of $\psi_a$.
\item{2.\quad} Rigorously exclude all the parameter values $\mathbb{R}\setminus
    \{a^*\}$ by appealing to Theorem \ref{vdgkLemma4.5} and the intermediate
    value theorem.
\item{3.\quad} Explicitly compute a formula for the $k^\mathrm{th}$ derivative
            ($k\in\mathbb{N}$ arbitrary) and prove that
            $\psi^{(k)}_{a^*}\Big|_{[-x^*,0]}\ge 0$, where $x^*>0$ is as large as possible due to Step 1. 
\end{description}

Decimal approximations of proven optimal values are given in Table \ref{tableofoptimalvalues},
along with some other properties.  Exact values are given in the sections indicated.

\begin{table}
\centering
    \begin{tabular}{c|l|c|c|c|l}
        $(s, p)$ & $R_{s/s,p} $ & deg($a^*$) & deg($R_{s/s,p}$) & Section(s) & Obstacle (with $R:=R_{s/s,p}$) \\ \hline 
	$(2,2)$     & $=4^\dagger$ & $+$ & 1 & \ref{knownvaluessle4qle1} & $\psi(-R)=\psi^\prime(-R)=0$ \\
	$(2,3)$     & $\approx 2.732050^{\ddagger}$ & $2$  & $2$ & \ref{sectionIRKs2} & $\psi^\prime(-R)=0$ \\
	$(2,4)$     & $=0^\dagger$ & $-$ & 1 & \ref{knownvaluessle4qle1} & $R=B=0$ or $\psi^{(6m)}(-R)=0$ \\
	$(3,5)$     & $\approx 2.301322$ & $6$ & $6$  &\ref{sectionIRKs3p5} & $R=B$ \\ 
        $(3,6)$     & $\approx 2.207606^{\ddagger}$ & $-$ & $3$ & \ref{IRKs3p6section} & $R=B$ \\ 
	$(4,7)$     & $\approx 2.743911$ & $30$ & $30$ & \ref{sectionIRKs4p7} and \ref{appendixsection9.1} & $R=B$ \\ 
	$(4,8)$     & $=0^\dagger$ & $-$ & 1 & Remark \ref{remark2.4} & $R=B=0$ \\ 
    \end{tabular}
\caption{Optimal $R_{s/s,p}$ values, together with the algebraic degree of the optimal parameter
choice within the given parametrization, the algebraic degree of $R_{s/s,p}$, the section number in which $R_{s/s,p}$ is
given as an \textit{exact} algebraic number, and the factor that limits the
optimal value (see Section \ref{earlierresultssection}). Superscripts $^\dagger$ and 
$^\ddagger$ indicate, respectively, that the optimal $R_{s/s,p}$ value was
already proved earlier exactly or was presented earlier numerically. A $-$ means that the corresponding $\Pi_{s/s,p}$ class contains only finitely many elements hence the parameter $a$ is not present, whereas a $+$ symbol denotes that the class cannot be described by only one parameter.  As for the integer  $m$ in the $(s,p)=(2,4)$ case, see Remark \ref{remarkIRKobstacle}.
\label{tableofoptimalvalues}}
\end{table}

\begin{rem}\label{remarkIRKobstacle}
We remark that the optimal $\psi$ functions corresponding to the $(s,p)=(2,2)$ and $(s,p)=(2,3)$ rows of 
Table \ref{tableofoptimalvalues} are also elements of the corresponding 
class $\Pihatsp$.  Hence for those functions $B=+\infty$, and so $R<B$. As for
the optimal $\psi$ corresponding to $(s,p)=(2,4)$, we have checked that for $0< k\le 100$, $\psi^{(k)}(0)=0$ if
and only if $k$ is divisible by 6. 
\end{rem}

\begin{rem}
Our computation of $R_{2/2,3}$ confirms the corresponding numerical result
presented in \cite[Table 6.1]{vdgk}. 
\end{rem}


\begin{rem}\label{remark2.4}
The classes $\Pisp$ where $p=2s$ each contain a single element: the $(s,s)$
Pad\'e approximation to the exponential.  For $s$ even, it was shown that
$R_{s/s,2s}=0$ in \cite[Theorem 5.1]{vdgk}.  For $s=3$, the optimal value
$R_{3/3,6}\approx 2.2076$ is presented numerically in \cite[Table 5.3]{vdgk},
and in Section \ref{IRKs3p6section} we have given its exact value as an algebraic
number of degree 3.  
\end{rem}

\subsection{Determination of $\Rhatsp$ for $s\le 4$}\label{determinationofRhatforsle4}

The known and newly obtained $\Rhatsp$ values are summarized in Table \ref{tableofSDIRKoptimalvalues}. Now let us briefly interpret our results in these classes.

  For $1\le s\le 4$, the classes $\widehat{\Pi}_{s/s,p}$ are empty when $p\ge s+2$; the trivial class  $\widehat{\Pi}_{1/1,2}$ is mentioned in Section \ref{determinationRhatp2s34}. So by taking into account Table \ref{tableofSDIRKoptimalvalues}, we have a \textit{complete description} of the optimal $\widehat{R}_{s/s,p}$ values for $s\le 4$ and $p \ge 2$.

 The classes $\widehat{\Pi}_{3/3,2}$ and $\widehat{\Pi}_{4/4,2}$ will be treated separately in Section \ref{determinationRhatp2s34} so as to be able to give a detailed exposition of their (more challenging) proofs.

The $\widehat{R}_{s/s,3}$ values for $2\le s \le4$ support Conjecture \ref{conj:hatp3}, being stronger
than \cite[Conjecture 3.2]{fs}. In other words, we have proved  \cite[Conjecture 3.2]{fs} for $3\le s\le 4$.

\begin{rem}\label{remark2.5onR(A,b)=Rhat}
On one hand, according to (\ref{Rineq}), if $A$ and $b$ correspond to any SDIRK method of $s$ stages and order $p$, then we have $R(A,b) \le \widehat{R}_{s/s,p}$. On the other hand, an SDIRK method is constructed in \cite{fs} for each pair $(s,p)$ satisfying $2\le s\le 4$ and $2\le p\le 3$ such that $R(A,b) = \widehat{R}_{s/s,p}$. Based on this, one might suspect that $\Rhatsp$
is equal to the optimal $R(A,b)$ radius for $s$-stage, order $p$ SDIRK methods
for each $s, p \ge 2$. However, Remark \ref{remark2.6s3p4} shows that this is not the case in general.
\end{rem}

\begin{rem}\label{remark2.6s3p4}
There exist exactly 3 functions in the set $\Pihat_{3/3,4}$.
These 3 rational functions are also mentioned in \cite[Section 3.4.1]{fs}.
The method satisfying the non-negativity condition $K\ge 0$ in \cite{fs} is the
one whose stability function yields the optimal $\widehat{R}_{3/3,4}$ value.
This function has radius of absolute monotonicity $\widehat{R}_{3/3,4}\approx 3.2872$,
but the corresponding optimal SDIRK method has only $R(A,b)\approx 1.7587$.
\end{rem}

\begin{rem}
The optimal $\psi_{a^*}$ in the $\widehat{\Pi}_{4/4,4}$ class is different from the stability function of the optimal method of \cite[Section 3.4.2]{fs} in this class obtained by numerical search. In other words, according to the numerical tests in \cite{fs}, the optimal $R(A,b)$ in the SDIRK $s=p=4$ case is $\approx 4.2081<\widehat{R}_{4/4,4}$ (\textit{c.f.} Remark \ref{remark2.6s3p4}).  
\end{rem}

\begin{table}
\centering
    \begin{tabular}{c|l|c|c|c|l}
        $(s, p)$ & $\widehat{R}_{s/s,p}$ & deg($a^*$) & deg($\widehat{R}_{s/s,p}$) & Section(s) & Derivatives that vanish at $-\widehat{R}_{s/s,p}$ \\ \hline
	$(2,2)$     & $=4^\dagger$  & $1$ & 1 & \ref{knownvaluessle4qle1} & $\ell=0,1$ \\
	$(3,2)$     & $=6$ & $+$ & 1 & \ref{sectionSDIRKs3p2} & $\ell=0,1,2$ \\ 
	$(4,2)$     & $=8$ & $+$ & 1 & \ref{sectionSDIRKs4p2} and \ref{appendixsection9.2} & $\ell=0,1,2,3$ \\ 
	$(2,3)$     & $\approx 2.732050^\ddagger$  & $-$ & 2 & Remark \ref{remarkIRKobstacle} & $\ell=1$ \\
	$(3,3)$     & $\approx 4.828427$ & $2$ & $2$&\ref{sectionSDIRKs3p3} & $\ell=1,2$ \\ 
	$(4,3)$     & $\approx 6.872983$ & $+$ & $2$ & \ref{section8.3} & $\ell=1,2,3$ \\ 
        $(3,4)$     & $\approx 3.287278$ & $-$ & $9$& \ref{sectionSDIRKs3p4} & $\ell=0$ \\ 
	$(4,4)$     & $\approx 5.167265$ & $9$ & $9$ & \ref{section8.2} & $\ell=0,1$ \\ 
	$(4,5)$     & $\approx 3.743299$ & $-$ & $12$&\ref{section8.1} & $\ell=1$ \\ 
    \end{tabular}
\caption{Optimal $\widehat{R}_{s/s,p}$ values, together with the algebraic
degree of the optimal parameter value in the given parametrization and the algebraic degree of $\widehat{R}_{s/s,p}$, the
section number in which they are given as \textit{exact} algebraic numbers,
and the first few derivatives of $\psi$ that vanish at $-\widehat{R}_{s/s,p}$ (see Theorem \ref{vdgkLemma4.5}).  The
symbols $^\dagger, ^\ddagger, +, -$ have the same meaning as in Table
\ref{tableofoptimalvalues}.}\label{tableofSDIRKoptimalvalues}
\end{table}

Finally, we give some additional information on the structure of the optimal $\psi$ functions in Table \ref{tableSDIRKstructures}.

\begin{table}[h!]
\centering
    \begin{tabular}{c|c}
        $(s, p)$    & Form of the optimal $\psi$ or its derivative \\ \hline 
	 $(2,2)$    &   $\psi(z)=(1+z/4)^2/(1-z/4)^2$ \\
        $(3,2)$     &   $\psi(z)=(1+z/6)^3/(1-z/6)^3$ \\
        $(4,2)$     &   $\psi(z)=(1+z/8)^4/(1-z/8)^4$ \\
        $(2,3)$     &  $\psi^\prime(z)=\left(1+\frac{1}{2} \left(\sqrt{2^2-1}-1\right) z\right)/\left(1+\frac{1}{6} \left(\sqrt{2^2-1}-3\right) z\right)^3$ \\ 	
	$(3,3)$     &  $\psi^\prime(z)=\left(1+\frac{1}{4} \left(\sqrt{3^2-1}-2\right) z\right)^2/\left(1+\frac{1}{8} \left(\sqrt{3^2-1}-4\right) z\right)^4$\\ 
	$(4,3)$     &    $\psi^\prime(z)=\left(1+\frac{1}{6} \left(\sqrt{4^2-1}-3\right) z\right)^3/\left(1+\frac{1}{10} \left(\sqrt{4^2-1}-5\right) z\right)^5$\\
        $(4,4)$     &$\psi(z)=(1+0.193526 z)^2 \left(1+0.224416 z+0.0437638 z^2\right) /(1-0.0971331 z)^4$ \\ 
    \end{tabular}
\caption{Each row shows the appropriate derivative of the optimal element in $\widehat{\Pi}_{s/s,p}$  having multiple roots. In the $(s,p)=(4,4)$ case, approximate constants are used. In the $(s,p)\in\{(3,4),(4,5)\}$ cases, no multiple roots were found among the first few derivatives of the optimal $\psi$ functions.}\label{tableSDIRKstructures}
\end{table}

\subsection{Determination of $\widehat{R}_{s/s,2}$ for $3\le s\le4$}\label{determinationRhatp2s34}
The case $p=2$ is of particular interest in light of the conjectures
presented in Section \ref{sec:intro}.
\begin{thm}\label{Thm2.2}
Fix $1\le s\le 4$.  Then we have $\widehat{R}_{s/s,2}=2s$, and the unique
$\psi\in \widehat{\Pi}_{s/s,2}$ that attains $R(\psi)=2s$ is
\begin{align} \label{opt_f}
\psi(z)=\frac{\left(1+\frac{z}{2s}\right)^s}{\left(1-\frac{z}{2s}\right)^s}.
\end{align}
\end{thm}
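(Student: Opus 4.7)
The cases $s = 1$ and $s = 2$ are already settled (Section \ref{knownvaluessle4qle1}), and the inequality $\widehat{R}_{s/s,2} \ge 2s$ for every $s$ is already known since \eqref{opt_f} attains it (as recalled just before Conjecture \ref{conj:hat}). My plan is therefore, for $s=3$ (Section \ref{sectionSDIRKs3p2}) and $s=4$ (Section \ref{sectionSDIRKs4p2}, with the heavy symbolic algebra in Section \ref{appendixsection9.2}), to prove the matching upper bound together with uniqueness: any $\psi \in \widehat{\Pi}_{s/s,2}$ with $R(\psi) \ge 2s$ must coincide with \eqref{opt_f}.

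The first step is to parametrize. Writing $\psi(z) = P(z)/(1-az)^s$ with $P \in \Pi_s$ and $P(0) = 1$, the two order conditions $\psi'(0) = \psi''(0) = 1$ determine $p_1 = 1 - sa$ and $p_2 = (1 - 2sa + s(s-1)a^2)/2$, leaving $a$ and $p_3, \dots, p_s$ as the $s-1$ free parameters. By Theorem \ref{vdgkCorollary3.4} we must have $a > 0$ (else $R(\psi) = 0$), so the only pole of $\psi$ lies strictly to the right of $0$, and Theorem \ref{absmon_int} turns $R(\psi) \ge 2s$ into the single requirement that $c_k := \psi^{(k)}(-2s)/k! \ge 0$ for every $k \ge 0$.

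Next, I would extract the linear recurrence hidden in the polynomial identity $(1-az)^s \psi(z) = P(z)$. Expanding both sides in powers of $w := z + 2s$ with $(1-az)^s = (\beta - aw)^s$, $\beta := 1 + 2as$, the vanishing of the $w$-coefficients of $P$ beyond order $s$ yields
\[
\sum_{i=0}^{s}\binom{s}{i}\beta^{s-i}(-a)^i\,c_{k-i}=0 \qquad (k \ge s+1),
\]
whose characteristic polynomial $(\beta x - a)^s$ has the single root $\lambda := a/\beta \in (0, 1/(2s))$ of multiplicity $s$. Hence $c_k = \lambda^k Q(k)$ for $k \ge 1$ with some $Q \in \Pi_{s-1}$, and the infinite positivity system $c_k \ge 0$ collapses to $c_0 \ge 0$ together with $Q(k) \ge 0$ for every positive integer $k$; because $\deg Q \le s-1$, the latter reduces to a finite sign check on the values $Q(1), \dots, Q(s)$ and the leading coefficient of $Q$. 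The three order conditions at $z = 0$, once rewritten in the variables $(a, c_0, Q(1), \dots, Q(s))$, become three polynomial equations, and together with the finitely many positivity inequalities they form a polynomial (in)equality system in the $s-1$ free parameters.

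The chief obstacle is showing that this finite system admits only \eqref{opt_f} as solution. For $s=3$ (two free parameters $(a, p_3)$), the inequalities $\psi(-6) \ge 0$ and $\psi'(-6) \ge 0$ alone force $(2a-1)(6a-1)(6a+1) \ge 0$, confining $a$ to $(0, 1/6] \cup [1/2, +\infty)$; the second branch is eliminated by examining higher-order $c_k$ (which can be shown to turn negative for some specific $k$, contradicting $R(\psi) \ge 6$ via Theorem \ref{vdgkLemma4.5}), and at $a = 1/6$ the upper and lower bounds on $p_3$ coincide precisely at $p_3 = 1/216$, singling out $\psi^*$. For $s=4$ the same strategy applies in three free parameters, but the elimination produces bulky univariate resultants whose real roots must be enumerated; this purely algebraic step is the content of Section \ref{appendixsection9.2}. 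The real difficulty throughout is to translate the original infinite positivity requirement into a zero-dimensional finite polynomial system whose unique real solution is the optimizer \eqref{opt_f}.
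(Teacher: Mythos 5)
Your framework matches the paper up to a point: the parametrization (\ref{generalSDIRKpsi}), the reduction to $a>0$ via Theorem \ref{vdgkCorollary3.4} (Lemma \ref{a_pos}), the use of Theorem \ref{absmon_int} to convert $R(\psi)\ge 2s$ into the pointwise conditions $c_k:=\psi^{(k)}(-2s)/k!\ge 0$ for all $k\ge 0$, and the observation that $c_k=\lambda^k Q(k)$ for $k\ge 1$ with $\lambda=a/(1+2as)>0$ and $Q\in\Pi_{s-1}$ — this last is precisely what the paper's explicit formula (\ref{psigeneralderivative}) encodes, just derived via a generating-function recurrence rather than by induction. So far so good.

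The serious flaw is the claim that, because $\deg Q\le s-1$, the infinitely many conditions $Q(k)\ge 0$ ($k\in\mathbb{N}^+$) reduce to a finite sign check on $Q(1),\dots,Q(s)$ plus the leading coefficient of $Q$. That implication is false: with $s=4$, the cubic $Q(x)=(x-1)\bigl((x-5)^2-\tfrac{1}{10}\bigr)$ has $Q(1)=0$, $Q(2),Q(3),Q(4)>0$ and leading coefficient $1$, yet $Q(5)<0$. No uniform finite truncation is equivalent to the full infinite positivity family, and this is exactly what makes the problem hard. The paper's system (\ref{firstgroup})--(\ref{secondgroup}) is genuinely infinite, and the technical heart of Sections \ref{sectionSDIRKs3p2} and \ref{sectionSDIRKs4p2} is a collection of lemmas constructing, for each wrong parameter tuple, an integer $k\ge s$ with $\psi^{(k)}(-2s)<0$ (Lemma \ref{SDIRKs3p2Lemma6.1} for $s=3$, producing a parameter-dependent $k$ via a gap of length $>9$; for $s=4$ the witnesses are $k=77$, $k=54$, or again parameter-dependent, and a remark there notes $k=54$ is the \emph{only} integer in $[4,1000]$ that works in its case). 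Your finite-reduction step would skip all of this and does not establish uniqueness. Separately, for $s=3$ the confinement $(0,1/6]\cup[1/2,+\infty)$ you obtain from $c_0,c_1\ge 0$ alone is weaker than the paper's $(0,1/6]\cup[\tfrac{5+2\sqrt3}{6},+\infty)$, which also invokes the leading-coefficient inequality (\ref{s3p2lc}); the interval $[1/2,\tfrac{5+2\sqrt3}{6})$ would still need to be disposed of, and your sketch also leaves $a\in(0,1/6)$ unaddressed — the paper rules it out by showing (\ref{s3p22}) and (\ref{s3p23}) are incompatible there.
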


The values $R_{1/1,2}=2$ and $R_{2/2,2}=4$ were determined already in \cite{vdgk}
and formed part of the basis for Conjecture \ref{conj:vdgk}. Moreover, 
we have seen in Section \ref{knownvaluessle4qle1} that the optimal elements in 
$\Pi_{1/1,2}$ and $\Pi_{2/2,2}$ are also elements of $\widehat{\Pi}_{1/1,2}$ and $\widehat{\Pi}_{2/2,2}$,
respectively, so  $\widehat{R}_{1/1,2}=R_{1/1,2}=2$ and $\widehat{R}_{2/2,2}=R_{2/2,2}=4$.
For the sake of completeness, we give two
short and direct proofs of the equality $\widehat{R}_{2/2,2}=4$ at the beginning of Section \ref{sectionSDIRKs2s3}, illustrating the two proof strategies we use in this work, and simultaneously proving the $s=2$ case of Theorem \ref{Thm2.2}. The $3\le s\le 4$ cases are new. The proof for $s=3$ is given in Section \ref{sectionSDIRKs3p2}, while the $s=4$ case is described in Sections \ref{sectionSDIRKs4p2} and \ref{appendixsection9.2}.
Theorem \ref{Thm2.2} confirms Conjecture \ref{conj:hat} for $s\le 4$, furthermore,  
it also proves Conjecture 3.1 in \cite[Section 3.2]{fs} for $3\le s\le 4$ (the truth of this conjecture for $1\le s \le 2$ being already established in \cite{fs}) regarding SDIRK methods that are optimal with respect to $R(A,b)$.

We give here just the common ingredients used in both proofs ($s=3$ and $s=4$), then the proofs are finished in the corresponding sections as indicated above.\\

For $s\ge 3$, any $\psi\in \widehat{\Pi}_{s/s,2}$ can be written as 
\begin{equation}\label{generalSDIRKpsi}
\psi(z)=\frac{1+\left(1-a \binom{s}{1}\right)z+ \left(\frac{1}{2}-a \binom{s}{1}+a^2 \binom{s}{2}\right)z^2+\sum_{n=3}^{s} a_n z^n}{(1-a z)^{s} }
\end{equation}
with some $a, a_3, a_4, \dots, a_s \in\mathbb{R}$.  Let us denote the
numerator of (\ref{generalSDIRKpsi}) by $P(z)$ and set $Q(z):=1-a z$, so that
$\psi=\frac{P}{Q^s}$.  

The next step is to exclude the non-positive $a$ values. Since Theorem \ref{Thm2.2} will be proved via the uniqueness argument

\smallskip

\begin{center}
$R(\psi)=2s$ implies the unique form (\ref{opt_f}) of $\psi$,
\end{center}

\smallskip

\noindent and $2s>s-1$, the assumption of the lemma below is justified.

\begin{lem}\label{a_pos}
Let $\psi\in\widehat{\Pi}_{s/s,2}$ be given in the form \eqref{generalSDIRKpsi}, and suppose that
$R(\psi)>s-1$.  Then the parameter $a$ appearing in \eqref{generalSDIRKpsi} satisfies
$a>0$.
\end{lem}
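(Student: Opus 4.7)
The plan is to rule out $a \le 0$ by combining the pole-location bound of Theorem \ref{vdgkCorollary3.4} with the classical Kraaijevanger bound on polynomial radii of absolute monotonicity \cite{kp}. Write $\psi = P/Q^s$ with $Q(z) = 1-az$, where $P$ is the numerator polynomial in \eqref{generalSDIRKpsi}.

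First I would treat $a < 0$. The only real root of $Q^s$ is $z_0 = 1/a < 0$, so $\psi$ has no positive real pole. If $P(z_0) \ne 0$, then Assumptions \ding{192}--\ding{194} hold and Theorem \ref{vdgkCorollary3.4} gives $R(\psi) = 0$. If $P(z_0) = 0$ with multiplicity $0 < k < s$, the reduced form $\widetilde P/(1-az)^{s-k}$ still has its unique pole at $z_0 < 0$, and the same corollary again yields $R(\psi) = 0$. Both conclusions contradict the hypothesis $R(\psi) > s-1 \ge 2$. The remaining subcase $k = s$, together with the case $a = 0$, both reduce $\psi$ to a polynomial $\widetilde\psi$ of degree at most $s$ with $\widetilde\psi(0) = \widetilde\psi'(0) = \widetilde\psi''(0) = 1$, inherited from the order-$2$ conditions preserved under cancellation.

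For this polynomial case I would pass to the derivative: $\widetilde\psi'$ is a polynomial of degree at most $s-1$ with $\widetilde\psi'(0) = (\widetilde\psi')'(0) = 1$, i.e., a consistent stability polynomial of order at least $1$ in the sense of \cite{kp}. The classical Kraaijevanger upper bound then gives $R(\widetilde\psi') \le s-1$. Moreover, absolute monotonicity of $\widetilde\psi$ on $[-r, 0]$ trivially implies absolute monotonicity of $\widetilde\psi'$ on the same interval, since $(\widetilde\psi')^{(k)} = \widetilde\psi^{(k+1)}$ for all $k \ge 0$; hence $R(\psi) = R(\widetilde\psi) \le R(\widetilde\psi') \le s-1$, again contradicting the hypothesis.

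The main technical care lies in the polynomial-reduction subcases: one must verify that cancellation between $P$ and $Q^s$ preserves the order-$2$ agreement with $\exp$ at the origin, and invoke precisely the version of the Kraaijevanger polynomial bound that applies to polynomials normalized by $\widetilde\psi'(0) = (\widetilde\psi')'(0) = 1$ (the bound fails for polynomials with arbitrary value-and-slope at the origin, as the example $1+z/N$ shows). Beyond these bookkeeping points, the argument is a direct application of the tools already collected in Section \ref{earlierresultssection}.
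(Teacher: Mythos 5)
Your proof is correct and follows essentially the same plan as the paper: rule out $a<0$ via Theorem \ref{vdgkCorollary3.4} (after reducing any common factor of $P$ and $(1-az)^s$), and rule out $a=0$ via a polynomial bound from \cite{kp}. The one genuine variation is in the polynomial case: the paper cites \cite[Theorem~2.1]{kp} directly for order-$2$ polynomials of degree at most $s$ to get $R(\psi)\le s-1$, whereas you rederive the same bound from the simpler order-$1$ Kraaijevanger estimate $R\le \deg$ applied to $\widetilde\psi'$, combined with the observation $R(\widetilde\psi)\le R(\widetilde\psi')$. That detour is valid and slightly more self-contained; it is in effect reproving the inductive step hidden inside \cite[Theorem~2.1]{kp}. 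One minor bookkeeping point: the subcase $a<0$, $k=s$ that you lump together with $a=0$ is actually vacuous, since $\deg P\le s$ forces $P=c\,(1-az)^s$ and hence $\psi\equiv c$, which cannot satisfy $\psi'(0)=1$; the paper's remark ``$\widetilde s>0$, since $p=2$ still holds'' is dispatching precisely this degenerate possibility. Stating this explicitly would tighten your write-up, but the argument as given is sound because the conclusion you draw for that subcase is vacuously true.
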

\begin{proof}
If $a=0$, then $\psi$ is a polynomial approximating the exponential function to order $p=2$ near the
origin. For such polynomials, $R(\psi)\le s-1$ \cite[Theorem~2.1]{kp}. For $a<0$,
$\psi$ has no positive real poles, so we will apply Theorem \ref{vdgkCorollary3.4} to show that $R(\psi)=0$. 
We need only to verify Assumptions \ding{192}--\ding{194}.
Assumptions \ding{192} and \ding{194} are automatically satisfied by functions of the form \eqref{generalSDIRKpsi}, so we are done if Assumption \ding{193} is also fulfilled. If not, then except at $z=1/a$, $\psi$
can be expressed as $\widetilde{P}/\widetilde{Q}$, where $\widetilde{P}$ and $\widetilde{Q}$
have degree $\widetilde{s}<s$ and no common roots.  Clearly,  $\widetilde{s}>0$, since $p=2$ still holds. 
Moreover, $\widetilde{P}(0)/\widetilde{Q}(0)=P(0)/Q(0)=1$, so we can assume $\widetilde{P}(0)=\widetilde{Q}(0)=1$. Now application of Theorem \ref{vdgkCorollary3.4} to $\widetilde{P}/\widetilde{Q}$ shows that $a< 0$ implies $R(\psi)=0$.
\end{proof}

Now a useful necessary condition is derived based on the fact that if a polynomial in $k$ is non-negative for all $k\in \mathbb{N}$, then its leading coefficient is also non-negative. Assumption $a>0$ in the next lemma is guaranteed of course by the previous lemma.

\begin{lem}
Suppose that $\psi=\frac{P}{Q^s}$ given by \eqref{generalSDIRKpsi} with $a>0$
is absolutely monotonic at some $x<0$.  Then
\begin{equation}\label{limitcondition}
\lim_{k\to\infty} \frac{1}{k^{s-1}}  \sum_{m=0}^{s} \frac{1}{m!}\binom{s-1+k-m}{s-1}
\left(\frac{1-ax}{a}\right)^{m} P^{(m)}(x)\ge 0.
\end{equation}
\end{lem}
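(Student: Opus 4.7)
The plan is to render the absolute monotonicity condition $\psi^{(k)}(x)\ge 0$ explicit via Leibniz's rule, and then take an asymptotic limit in $k$.

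First I would write $\psi = P\cdot(1-az)^{-s}$ and apply the Leibniz rule. Using the identity $[(1-az)^{-s}]^{(\ell)} = \ell!\binom{s+\ell-1}{s-1}a^\ell(1-az)^{-s-\ell}$, and the fact that $P^{(m)}\equiv 0$ for $m>s$ since $P$ has degree at most $s$, a short calculation yields, for all integers $k\ge s$,
\begin{equation*}
\psi^{(k)}(z) = \frac{k!\,a^k}{(1-az)^{s+k}}\sum_{m=0}^{s}\frac{P^{(m)}(z)}{m!}\binom{s-1+k-m}{s-1}\left(\frac{1-az}{a}\right)^m.
\end{equation*}
Denote the inner sum by $S_k(z)$. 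Evaluating at $z=x<0$ and using $a>0$ (so that $1-ax>0$), the prefactor $k!\,a^k/(1-ax)^{s+k}$ is strictly positive, and therefore the hypothesis $\psi^{(k)}(x)\ge 0$ is equivalent to $S_k(x)\ge 0$ for every $k\ge s$.

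Second, I would analyse $S_k(x)/k^{s-1}$ as $k\to\infty$. For each fixed $m\in\{0,1,\dots,s\}$, the expression $\binom{s-1+k-m}{s-1}$ is a polynomial in $k$ of degree $s-1$ with leading coefficient $1/(s-1)!$, so
\begin{equation*}
\lim_{k\to\infty}\frac{1}{k^{s-1}}\binom{s-1+k-m}{s-1}=\frac{1}{(s-1)!}.
\end{equation*}
Because $S_k(x)$ is a finite sum of $s+1$ terms, one may interchange summation and limit, and thus $\lim_{k\to\infty}S_k(x)/k^{s-1}$ exists in $\mathbb{R}$. Since $S_k(x)/k^{s-1}\ge 0$ for every $k\ge s$, the limit is also $\ge 0$, which is precisely \eqref{limitcondition}.

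The argument is essentially mechanical; I do not expect a serious obstacle. The only point that must be flagged explicitly is verifying that $1-ax>0$, so the common prefactor has an unambiguous positive sign and one may pass from $\psi^{(k)}(x)\ge 0$ to $S_k(x)\ge 0$ without sign issues; this is guaranteed by the standing hypotheses $a>0$ and $x<0$.
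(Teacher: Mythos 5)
Your proposal is correct and follows essentially the same path as the paper's proof: both reduce to the explicit formula for $\psi^{(k)}$ given in display \eqref{psigeneralderivative}, observe that the prefactor $k!\,a^k/(1-ax)^{s+k}$ (equivalently $(-Q')^k k!/Q^{s+k}$) is positive since $a>0$ and $1-ax>0$, and then divide by $k^{s-1}$ and let $k\to\infty$, noting that each binomial coefficient contributes $k^{s-1}/(s-1)! + O(k^{s-2})$ so the limit of the finite sum exists. The only cosmetic difference is that you derive \eqref{psigeneralderivative} directly from the Leibniz rule together with $[(1-az)^{-s}]^{(\ell)}=\ell!\binom{s+\ell-1}{s-1}a^\ell(1-az)^{-s-\ell}$, whereas the paper simply states that the formula "can be shown by induction"; your route actually fills in that gap explicitly.
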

\begin{proof}
From \eqref{generalSDIRKpsi} it can be shown by induction that the $k^{\mathrm{th}}$ derivative of $\psi$ ($k\in\mathbb{N}$)
is
\begin{equation}\label{psigeneralderivative}
\psi^{(k)}=\left(\frac{P}{Q^s}\right)^{(k)}=\frac{ \left(-Q^\prime\right)^{k} k!}{Q^{s+k}}\  \sum_{m=0}^{\min(k,s)} \frac{1}{m!}\binom{s-1+k-m}{s-1} \left(-\frac{Q}{Q^\prime}\right)^{m} P^{(m)}. 
\end{equation}
Suppose that $\psi=\frac{P}{Q^s}$ is absolutely monotonic at some $x<0$.
We have now $-Q'(x) = a>0$ and 
$Q(x)>0$, so \eqref{psigeneralderivative} implies
\begin{equation}\label{oldlimitcondition}
\lim_{k\to\infty} \frac{1}{k^{s-1}}  \sum_{m=0}^{\min(k,s)} \frac{1}{m!}\binom{s-1+k-m}{s-1}
\left(-\frac{Q(x)}{Q^\prime (x)}\right)^{m} P^{(m)}(x)\ge 0.
\end{equation}
Note that the above limit always exists since
the sum is a polynomial in $k$ of degree at most $s-1$ and the sum is
finite.
\end{proof}

After these preparations, we show in Section \ref{sectionSDIRKs3p2} for $s=3$,
and in Sections \ref{sectionSDIRKs4p2} and \ref{appendixsection9.2} for $s=4$,
that \eqref{opt_f} is the unique solution of the following non-linear system of
polynomials in the variables $a>0$ and $a_3, a_4, \dots, a_s \in\mathbb{R}$:
\begin{equation}\label{firstgroup}
\psi^{(k)}(-2s)\ge 0\quad \mathrm{for}\quad k=0,1,\ldots s-1
\end{equation}
and
\begin{equation}\label{secondgroup}
\forall k\in\mathbb{N}, k\ge s:\quad \psi^{(k)}(-2s)\ge 0.
\end{equation}

The strength of the above uniqueness-type argument is that we could handle more than one parameter within this framework. The disadvantage is, however, that the optimal value (here $R=2s$) must be known in advance. The reason for separating indices $0\le k\le s-1$ and $k\ge s$ in the above non-linear system will be explained in Remark \ref{gapconditionremark}.

\subsection{Further results and questions}\label{furtherresultssection}
In this section we present some intermediate results that may prove useful
in future studies of the radius of absolute monotonicity. 

The motivation for Section \ref{traceinequalitiessection} came from Conjecture \ref{conj:kmg}. Let 
$A, b$ denote the coefficients of an arbitrary RK method; then $R(A,b)>0 \implies A\ge 0$ \cite[Observation~5.2]{SSPbook}
(again, matrix inequalities are understood componentwise).
 The trace inequality (\ref{trineq}) for non-negative matrices 
 is used to derive a certain non-linear relation (\ref{non-linear relation
 origin}) between the first few coefficients of the polynomial appearing in the
 numerator of the stability function (\ref{stabilityfunctionformula}). 
 
Next we turn our attention to Conjecture \ref{conj:hat}.
Let us fix $s\ge 3$ and $p=2$, choose an arbitrary SDIRK method with
coefficients $A, b$, and consider its stability function
$\psi \in \Pihat_{s/s,2}$  as described by formula  (\ref{generalSDIRKpsi}).
Let $a_0, a_1$ and $a_2$ denote the first few coefficients of the numerator of $\psi$, that is $a_0:=1$, $a_1:=1-a \binom{s}{1}$ and
$a_2:=\frac{1}{2}-a \binom{s}{1}+a^2 \binom{s}{2}$, then a simple computation
shows that (\ref{non-linear relation origin}) is satisfied---in fact, with
\textit{equality}---despite the fact that the non-negativity condition $A\ge
0$ here is \textit{not} assumed. If $3\le s \le 8$, then Lemma \ref{lemma2.5}
in Section \ref{section2.5.3poly} gives a remarkable uniqueness result for
polynomials: the unique polynomial given by Lemma \ref{lemma2.5} is identical
to the numerator of the conjectured optimal and unique $\psi$ function
appearing in Theorem \ref{Thm2.2} (or in Conjecture \ref{conj:hat}). Apart
from the fact that we could prove Lemma \ref{lemma2.5} only for $3\le s \le
8$, the missing link is the following: the uniqueness result in Theorem
\ref{Thm2.2} is essentially obtained under the condition 
\begin{equation}\label{transition1}
\psi^{(k)}(-2s)\ge 0\quad \mathrm{for}\quad k\in\mathbb{N},
\end{equation}
whereas in Lemma \ref{lemma2.5} we assumed 
\begin{equation}\label{transition2}
P^{(k)}(-2s)\ge 0\quad \mathrm{for}\quad   k=0,1, \ldots, s.
\end{equation}
We are very curious whether a result similar to Lemma \ref{lemma2.5} could lead to a proof of Theorem \ref{Thm2.2} for general $s$ values, that is, to a proof of Conjecture \ref{conj:hat}.

In Section \ref{structureofPihatssp} we give a formula for the elements of the $\widehat{\Pi}_{s/s,p}$ class for general $s$ and $p$ values, while in Section \ref{possibleextensions} we briefly touch on some of our proof techniques and their possible extensions.

\begin{rem}
We have found the following conjecture whose assumption---with $p_m=P^{(m)}(x)/m!$, $q=Q(x)$ and $\widetilde{q}=Q^\prime (x)=-a$ for some $x\in [-2s,0]$---is similar to the sum in (\ref{psigeneralderivative}) apart from the factor $(-1)^{k-m}$ and the fact that the $p_m$ quantities can be independent of one another. The conjecture has been proved by \Mma for $1\le s\le 10$, and we have a  "manual" proof for $s=2$. We do not know whether the conjecture could be used in a transition from conditions (\ref{transition1}) to (\ref{transition2}).  
\end{rem}

\begin{conj} Fix any $s\in\mathbb{N}^+$, $0\ne q\in\mathbb{R}$, $0<\widetilde{q}\in\mathbb{R}$ and $p_m\in\mathbb{R}$ ($m=0,1,\ldots,s+1$), and suppose that for each $k=0,1,\ldots, s+1$ we have
\[
\sum_{m=0}^{\min(k,s)} \binom{s-1+k-m}{s-1}\cdot  p_m\cdot q^m\cdot (-\widetilde{q})^{k-m}\ge 0.
\]
Then $p_0\ge 0$ and for $m=1,2,\ldots, s+1$, we have 
$p_m=p_0\binom{s}{m}\cdot\left(\frac{\widetilde{q}}{q}\right)^m$.  Moreover, the above sum  is equal to $p_0$, if $k=0$, and $0$, if $k=1, 2, \ldots, s+1$.
\end{conj}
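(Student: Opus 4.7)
The plan is to encode the $s+2$ hypothesised inequalities as coefficients of a single generating function and then exploit a polynomial-degree constraint. Setting $\phi_m:=p_m q^m$ for $m=0,\ldots,s$ and $A(z):=\sum_{m=0}^{s}\phi_m z^m$, I would use the identity
\[
\sum_{j\ge 0}\binom{s-1+j}{s-1}(-\widetilde{q}\,z)^j=(1+\widetilde{q}\,z)^{-s}
\]
to recognise the left-hand side of the $k$-th hypothesised inequality, which I denote $S_k$, as the coefficient $[z^k]\bigl(A(z)(1+\widetilde{q}\,z)^{-s}\bigr)$. Writing $G(z):=\sum_{k\ge 0}S_k z^k$, this is equivalent to $(1+\widetilde{q}\,z)^{s}\,G(z)=A(z)$, which is a polynomial of degree at most $s$.

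The key step is then to compare coefficients of $z^{s+1}$ on both sides. Since $[z^{s+1}]A(z)=0$, one obtains
\[
S_{s+1}+\sum_{j=1}^{s}\binom{s}{j}\widetilde{q}^{\,j}\,S_{s+1-j}=0.
\]
With $\widetilde{q}>0$ and $S_0,\ldots,S_{s+1}\ge 0$ by hypothesis, every term on the left-hand side is non-negative, so each must vanish individually; this forces $S_1=S_2=\cdots=S_{s+1}=0$, hence $G(z)\equiv S_0=p_0$ and $A(z)=p_0(1+\widetilde{q}\,z)^{s}$. Reading off the coefficients of $A(z)$ yields $\phi_m=p_0\binom{s}{m}\widetilde{q}^{\,m}$, i.e., $p_m=p_0\binom{s}{m}(\widetilde{q}/q)^m$ for $m=0,1,\ldots,s$, while $p_0=S_0\ge 0$ is immediate; the ``moreover'' clause on the explicit values of the sums then follows by construction.

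The only delicate point is that, because the summation range $\sum_{m=0}^{\min(k,s)}$ does not include $m=s+1$, the variable $p_{s+1}$ does not appear in any of the hypothesised inequalities and is therefore formally unconstrained by them; the conclusion $p_{s+1}=p_0\binom{s}{s+1}(\widetilde{q}/q)^{s+1}=0$ is nonetheless consistent, since $\binom{s}{s+1}=0$. I would regard this as a harmless notational convention or a minor slip in the stated index range; if instead the intended summation extends to $\min(k,s+1)$, the same generating-function argument applies with $A(z)$ of degree $s+1$, and the additional coefficient comparison at $z^{s+1}$ forces $\phi_{s+1}=0$ by the same sign argument, so the substance of the proof is unaffected in either reading.
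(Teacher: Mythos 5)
Your proposal addresses a statement that the paper explicitly leaves as an open conjecture: the authors only report that Mathematica verified it for $1\le s\le 10$ and that they have a ``manual'' proof for $s=2$, which is not shown. Your generating-function argument, in contrast, settles the conjecture for all $s\ge 1$. The argument is correct: defining $\phi_m=p_m q^m$ and $A(z)=\sum_{m=0}^s\phi_m z^m$, and using the binomial series $\sum_{j\ge 0}\binom{s-1+j}{s-1}(-\widetilde q z)^j=(1+\widetilde q z)^{-s}$, one indeed gets $S_k=[z^k]\bigl(A(z)(1+\widetilde q z)^{-s}\bigr)$. Comparing the coefficient of $z^{s+1}$ in $G(z)(1+\widetilde q z)^s=A(z)$ (which vanishes since $\deg A\le s$) gives $\sum_{j=0}^s\binom{s}{j}\widetilde q^{\,j}S_{s+1-j}=0$, a sum of non-negative terms because $\widetilde q>0$ and $S_0,\dots,S_{s+1}\ge 0$; hence $S_1=\cdots=S_{s+1}=0$. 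Comparing coefficients of $z^m$ for $0\le m\le s$ (only the $j=m$ term survives) then yields $\phi_m=\binom{s}{m}\widetilde q^{\,m}p_0$, i.e.\ $p_m=p_0\binom{s}{m}(\widetilde q/q)^m$, and $p_0=S_0\ge 0$. This is a clean and fully general proof, which is strictly stronger than what the paper has; you should note explicitly that it resolves the conjecture rather than merely recovering a known argument.

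Two small remarks. First, the clause ``hence $G(z)\equiv S_0=p_0$'' is stated before it is justified: from $S_1=\cdots=S_{s+1}=0$ alone you cannot yet assert that all higher coefficients of $G$ vanish; you should first conclude $A(z)=p_0(1+\widetilde q z)^s$ by coefficient comparison for $m=0,\dots,s$, and then $G\equiv p_0$ follows by dividing. This is a matter of ordering only and does not affect the result. Second, your flag on the $m=s+1$ case is correct --- $p_{s+1}$ never appears in any hypothesis, so the stated conclusion $p_{s+1}=0$ is not derivable; this is an imprecision in the conjecture's statement, not in your proof. However, your proposed repair (extending the summation to $\min(k,s+1)$ and letting $A$ have degree $s+1$) does not actually give $\phi_{s+1}=0$ by the same sign argument: with that reading, $[z^{s+1}]A=\phi_{s+1}$, so the coefficient comparison at $z^{s+1}$ reads $\phi_{s+1}=\sum_{j=0}^s\binom{s}{j}\widetilde q^{\,j}S_{s+1-j}$, which only forces $\phi_{s+1}\ge 0$, not $\phi_{s+1}=0$. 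The simplest reconciliation is that the conclusion's index range should be $m=1,\dots,s$ (or that $p_{s+1}$ should not be among the given data), and your proof then proves exactly what is provable.
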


\subsubsection{Trace inequalities for non-negative matrices and the numerator of the stability function}\label{traceinequalitiessection}

Conjecture \ref{conj:kmg} in the $s=3$, $p=2$ case claims that $R(A,b)\le 6$ for any RK method with 
coefficients $A, b$. 
 The following general lemma was discovered while investigating this conjectured bound.
Due to the remark in the beginning of Section \ref{furtherresultssection}, $A\ge 0$ will be assumed throughout the current Section \ref{traceinequalitiessection}.

Let us fix a positive integer $s\ge 2$ and apply the following notation: if $A$ is an $s$-by-$s$ matrix, then $\tau:=\mathrm{tr}(A)$ and $\tau_k:=\mathrm{tr}(A^k)$ for any integer $k\ge 2$.

\begin{lem}\label{lemma2.12aboutthetrace} Fix a positive integer $n\ge 2$ and suppose that $A$ is an $s$-by-$s$ (componentwise) non-negative matrix. Then
\begin{equation}\label{trineq}
s^{n-1} \tau_n \ge \tau^{n}. 
\end{equation}
Moreover, in the $n=2$ case equality holds if and only if $a_{k,k}=a_{1,1}$ for $k=2,3,\ldots,s$ and $a_{i,j} a_{j,i}=0$ for all $i\neq j$, further, if $n\ge 3$ and $s^{n-1} \tau_n = \tau^{n}$, then $a_{k,k}=a_{1,1}$ for $k=2,3,\ldots,s$.
\end{lem}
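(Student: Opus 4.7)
The plan is to reduce the inequality to the combination of two elementary facts: non-negativity of the off-diagonal cycle contributions to $\mathrm{tr}(A^n)$, and convexity of $t\mapsto t^n$ on $[0,\infty)$.

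First I would expand
\[
\tau_n=\mathrm{tr}(A^n)=\sum_{i_1,\ldots,i_n=1}^{s} a_{i_1 i_2}\,a_{i_2 i_3}\cdots a_{i_{n-1} i_n}\,a_{i_n i_1},
\]
isolate the diagonal contributions $i_1=\cdots =i_n=i$, and observe that every remaining summand is a product of non-negative entries of $A$, hence non-negative. This gives the lower bound
\[
\tau_n \;\ge\; \sum_{i=1}^{s} a_{ii}^{\,n}. \tag{$\ast$}
\]
Next I would apply the power-mean (equivalently, Jensen's) inequality to the non-negative numbers $a_{11},\ldots,a_{ss}$ and the convex function $t\mapsto t^n$ on $[0,\infty)$:
\[
\frac{1}{s}\sum_{i=1}^{s} a_{ii}^{\,n} \;\ge\; \left(\frac{1}{s}\sum_{i=1}^{s} a_{ii}\right)^{\!n}=\frac{\tau^n}{s^n}. \tag{$\ast\ast$}
\]
Multiplying $(\ast\ast)$ by $s$ and chaining with $(\ast)$ yields $s^{n-1}\tau_n\ge \tau^n$, which is exactly~\eqref{trineq}.

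For the equality analysis I would examine when each of $(\ast)$ and $(\ast\ast)$ is sharp. Equality in $(\ast\ast)$ is the standard equality case of Jensen applied to a strictly convex function on $[0,\infty)$ (strictness for $n\ge 2$), so it forces $a_{11}=a_{22}=\cdots =a_{ss}$; this already covers the claim for $n\ge 3$, where only a necessary condition is asserted. For $n=2$, equality in $(\ast)$ additionally requires $\sum_{i\ne j} a_{ij}a_{ji}=0$, and since each term is non-negative we get $a_{ij}a_{ji}=0$ for every $i\ne j$. Conversely, both of these pointwise conditions make $(\ast)$ and $(\ast\ast)$ sharp simultaneously, giving the stated iff in the quadratic case.

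I do not expect a serious obstacle: the whole argument is one-page elementary once $(\ast)$ and $(\ast\ast)$ are identified. The only mildly delicate point is justifying that, for $n\ge 3$, the converse of the equality statement is \emph{not} claimed; indeed, keeping $a_{ii}$ constant does not force the off-diagonal cycle sums in $\tau_n-\sum_i a_{ii}^n$ to vanish, and the lemma correctly restricts itself to the one-directional implication.
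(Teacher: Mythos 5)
Your proof is correct and takes essentially the same route as the paper: establish $\tau_n \ge \sum_i a_{ii}^n$ by discarding the non-negative off-diagonal cycle contributions to $\mathrm{tr}(A^n)$, then apply the power-mean (Jensen) inequality, with the equality analysis splitting across those two steps exactly as in the paper. The only cosmetic difference is that you justify the first step via the direct multi-index expansion of $\mathrm{tr}(A^n)$, whereas the paper phrases it as the recursive diagonal bound $(AB)_{k,k}\ge a_{kk}b_{kk}$; these are the same observation.
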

\begin{proof}
If $A$ and $B$ are non-negative $s$-by-$s$ matrices, then 
$(AB)_{k,k}\ge a_{k,k} b_{k,k}$.  Applying this recursively, we get that tr($A^n)\ge \sum_{k=1}^s a_{k,k}^n$. Then the inequality between the $n^\mathrm{th}$ power mean and the arithmetic mean shows that
\[
\sqrt[n]{\frac{\mathrm{tr}(A^n)}{s}}\ge \sqrt[n]{\frac{\sum_{k=1}^s a_{k,k}^n}{s}}\ge \frac{\sum_{k=1}^s a_{k,k}}{s}=\frac{\mathrm{tr}(A)}{s},
\]
proving the trace inequalities. Now if $n\ge 3$ and $s^{n-1} \tau_n = \tau^{n}$, then we have equality in the power mean inequality, which implies $a_{k,k}=a_{1,1}$ for $k=2,3,\ldots,s$. Finally, suppose that $n=2$. Then $s \tau_2 = \tau^{2}$ holds if and only if we have equality in the power mean inequality and tr($A^2)= \sum_{k=1}^s a_{k,k}^2$. But the former holds if and only if $a_{k,k}=a_{1,1}$ for $k=2,3,\ldots,s$, while the latter holds if and only if $a_{i,j} a_{j,i}=0$ for all $i\neq j$.
\end{proof}

\begin{rem} \  \\ \indent $\bullet$ If $s=2$, $n=3$, $a_{1,1}=a_{2,2}=0$ and $a_{1,2}=a_{2,1}=1$, then $s^{n-1} \tau_n = \tau^{n}$, but $a_{1,2}\, a_{2,1}\ne 0$.\\ \indent $\bullet$ The constant $s^{n-1}$ is the best possible (as shown by $A=I$).\\ \indent $\bullet$ We can not expect a "converse" trace inequality, since for the matrix $A$ with 0's in the diagonal and with all other entries 1, we have tr$^2(A)=0$ and tr$(A^2)>0$.\\ \indent $\bullet$ For the conjectured optimal RK method satisfying $R(A,b)=6$, we have equality in (\ref{trineq}) for $n=2$.   
\end{rem}

Let us now fix any $s\ge 3$. By repeatedly using the formulae for the derivative of the determinant and the trace 
\[
\left(\det\Phi(\cdot)\right)^\prime = (\det\Phi(\cdot))\cdot \textrm{tr}(\Phi^{-1}(\cdot)\Phi^\prime(\cdot)) 
\quad \mathrm{and}\quad 
\left(\mathrm{tr\,}\Phi(\cdot)\right)^\prime = \mathrm{tr}(\Phi^\prime(\cdot)), 
\]
where $\Phi:\mathbb{R}\to \mathbb{R}^{s\times s}$ is a smooth---and for the first formula, invertible---matrix function, the cyclic invariance of the trace together with the $p=2$ order conditions, we see that the numerator of the stability function (\ref{stabilityfunctionformula}), $\det(I-zA + z \one b^\top)$,
can be written in the following form 
\[
P(z)=1+(1-\tau)z+\frac{1}{2}\left((1-\tau)^2-\tau_2\right)z^2+\sum_{k=3}^s a_k z^k,
\]
with suitable real
parameters  $a_k$ (there may be further restrictions on the $a_k$ parameters which are ignored here). Let us introduce $a_0:=1$, $a_1:=1-\tau$, $a_2:=\frac{1}{2}(a_1^2-\tau_2)$. Then, due to $A\ge 0$, we have $\tau\ge 0$ and $\tau_2\ge 0$, so  (\ref{trineq}) with $n=2$ implies $-\tau_2\le -\frac{\tau^2}{s}$, thus $a_2=\frac{1}{2}(a_1^2-\tau_2)\le \frac{1}{2}(a_1^2-\frac{\tau^2}{s})$, but $\tau=1-a_1$, hence we have derived a non-linear condition
\begin{equation}\label{non-linear relation origin}
\frac{1}{2}\left(a_1^2-\frac{(1-a_1)^2}{s}\right)\ge a_2.
\end{equation}

\subsubsection{Uniqueness results for polynomials closely related to the class $\Pihat_{s/s,2}$}\label{section2.5.3poly}

In this section, the main lemma is Lemma \ref{lemma2.5}, although Lemma \ref{boundson} can be of independent interest, giving lower and upper bounds on the coefficients of a general polynomial $P$ with $P(0)=1$ that is absolutely monotonic at a point $-r<0$, together with a uniqueness result.

\begin{lem}\label{boundson}
Let $s$ denote a fixed positive integer and set $P(z):=1+\sum_{n=1}^s a_n z^n$ with some real coefficients $a_n$. Fix any $r>0$ and suppose that $P^{(k)}(-r)\ge 0$ for all $k=0,1,\ldots,s$.
Then for each index $1\le n\le s$ we have 
$
0\le a_n \le \frac{\binom{s}{n}}{r^n}.
$
Moreover, if there is at least one $n$  $(1\le n\le s)$ with $a_n = \frac{\binom{s}{n}}{r^n}$, then $P(z)=\left(1+\frac{z}{r}\right)^s.$
\end{lem}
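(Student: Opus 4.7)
The plan is to change basis from $\{1,z,\ldots,z^s\}$ to $\{1,(z+r),\ldots,(z+r)^s\}$. Since $P$ has degree at most $s$, Taylor's theorem at the point $-r$ gives
\[
P(z)=\sum_{k=0}^{s} c_k\,(z+r)^k,\qquad c_k:=\frac{P^{(k)}(-r)}{k!},
\]
and the hypothesis is precisely $c_k\ge 0$ for $k=0,1,\ldots,s$. The normalization $P(0)=1$ translates into the single affine constraint $\sum_{k=0}^{s} c_k r^k=1$.

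Expanding each $(z+r)^k$ by the binomial theorem and reading off the $z^n$ coefficient gives
\[
a_n=\sum_{k=n}^{s}\binom{k}{n}\,r^{k-n}\,c_k,
\]
so the lower bound $a_n\ge 0$ is immediate from $r>0$ and $c_k\ge 0$. For the upper bound I would introduce $t_k:=r^k c_k\ge 0$ (so $\sum_{k=0}^s t_k=1$), rewrite the displayed formula as $r^n a_n=\sum_{k=n}^{s}\binom{k}{n}\,t_k$, and apply the elementary monotonicity $\binom{k}{n}\le\binom{s}{n}$ for $n\le k\le s$, which is strict whenever $1\le n$ and $k<s$. Bounding each summand and then using $\sum_{k=n}^s t_k\le 1$ yields $r^n a_n\le\binom{s}{n}$, which is the claimed upper bound.

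For the uniqueness statement I would simply trace back when both of these inequalities become equalities. If $a_n=\binom{s}{n}/r^n$ for some $1\le n\le s$, the strict monotonicity forces $t_k=0$ for every $n\le k\le s-1$; combined with $\sum_{k=n}^s t_k=1$ this gives $t_s=1$, after which $\sum_{k=0}^{s}t_k=1$ forces the remaining $t_0,\ldots,t_{n-1}$ to vanish as well. Hence $c_s=r^{-s}$ and $c_k=0$ for $k<s$, so
\[
P(z)=\frac{(z+r)^s}{r^s}=\left(1+\frac{z}{r}\right)^{s}.
\]

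I do not expect any real obstacle here: once one notices the idea of expanding around $-r$, the whole argument collapses to an elementary linear-programming estimate built on the nonnegativity of the $c_k$ and the monotonicity of $k\mapsto\binom{k}{n}$. It is also worth noting that the $n=0$ case would \emph{not} yield uniqueness, which is consistent with the lemma's restriction to $n\ge 1$.
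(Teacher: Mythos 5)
Your proof is correct and uses the same key idea as the paper: expand $P$ in the basis $\{(z+r)^k\}_{k=0}^s$ via Taylor's theorem at $-r$, note that the normalized Taylor coefficients $t_k = r^k P^{(k)}(-r)/k!$ are non-negative and sum to $1$, and read off $r^n a_n = \sum_{k\ge n}\binom{k}{n}t_k$. The only (cosmetic) difference is in the final optimization step: the paper establishes the upper bound and uniqueness via an exchange argument (moving mass from any $\gamma_m$ with $m<s$ onto $\gamma_s$ strictly increases the sum when $n\ge 1$), whereas you bound directly using $\binom{k}{n}\le\binom{s}{n}$ together with $\sum_{k\ge n}t_k\le 1$ and trace the equality conditions; your version is arguably a bit cleaner, but the substance is identical.
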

\begin{proof}
By assumption, with $\gamma_k :=\frac{P^{(k)}(-r)}{k!}r^k$ we have $\gamma_k\ge 0$  for $k=0,1,\ldots,s$. Taylor expansion, the binomial theorem and interchanging the order of summations show that 
\[
P(z)=\sum_{k=0}^s \gamma_k \left(1+\frac{z}{r}\right)^k=
\sum_{k=0}^s \sum_{n=0}^{k}\gamma_k \binom{k}{n}\frac{z^n}{r^n}=
\]
\[
\sum_{n=0}^s \sum_{k=n}^{s}\gamma_k \binom{k}{n}\frac{z^n}{r^n}=
\sum_{n=0}^s \left(\frac{1}{r^n}\sum_{k=n}^{s}\gamma_k \binom{k}{n}\right)z^n.
\]
Now fix $1\le n\le s$. By equating the coefficients of $z^n$ we get 
$
a_n=\frac{1}{r^n}\sum_{k=n}^{s}\gamma_k \binom{k}{n}, 
$
further, from the equality of the constant terms
\begin{equation}\label{gamma1}
\sum_{k=0}^{s}\gamma_k=1. 
\end{equation}
Non-negativity of the $\gamma_k$ coefficients implies $a_n\ge 0$ and (\ref{gamma1}) shows $0\le \gamma_k\le 1$ for all $0\le k\le s$. Finally we seek the maximum of $\sum_{k=n}^{s}\gamma_k \binom{k}{n}$ knowing $0\le \gamma_k$ and (\ref{gamma1}). Suppose that $\gamma_m>0$ for some $m$ with $0\le m\le s-1$. Then we define $\widetilde{\gamma_s}:=\gamma_s+\gamma_m$, $\widetilde{\gamma_m}:=0$ and $\widetilde{\gamma_k}:=\gamma_k$ for $s\ne k\ne m$. Clearly, $0\le \widetilde{\gamma_k}$ for each $0\le k\le s$ and $\sum_{k=0}^{s}\widetilde{\gamma_k}=1$, but (by using the convention that $\binom{m}{n}=0$ if $m < n$)
\[
\sum_{k=n}^{s}\widetilde{\gamma_k} \binom{k}{n}-\sum_{k=n}^{s}\gamma_k \binom{k}{n}=\binom{s}{n}\gamma_m+\binom{m}{n}(-\gamma_m)=
\gamma_m \left(\binom{s}{n}-\binom{m}{n}\right),
\]
and this last expression is strictly positive, because $1\le n\le s$ is fixed, $s > m$ and $\gamma_m>0$. We can therefore conclude that (for fixed $s\ge 1$ and $1\le n\le s$) the value of $\sum_{k=n}^{s}\gamma_k \binom{k}{n}$ over all $0\le \gamma_k$ ($k=0,1,\ldots,s$) and under condition (\ref{gamma1})  is maximal if and only if $\gamma_0=\gamma_1=\ldots=\gamma_{s-1}=0$ and $\gamma_s=1$. This property establishes the upper bound on $a_n$ and the uniqueness part as well.
\end{proof}
Next we present an interesting  lemma about representing a certain linear combination of the  three lowest order coefficients of a general polynomial of degree at most $s$ in terms of another linear combination of its derivatives evaluated at $-2s$.

\begin{lem}\label{linearcombinationlemma}
Fix any integer $s\ge 2$ and choose an arbitrary $P(z):=1+\sum_{k=1}^s a_k z^k$ (with complex coefficients). Then
\[
\sum_{k=0}^s \frac{(2s)^k (s-k)(s-k-1)}{k! s(s-1)}P^{(k)}(-2s)=1-4a_1+\frac{8s}{s-1}a_2.
\]
\end{lem}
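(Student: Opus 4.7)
The plan is to exploit the fact that the identity is linear in $P$, combined with the observation that $(s-k)(s-k-1)$, viewed as a polynomial in $k$, has degree only $2$. I would first expand
\[
(s-k)(s-k-1) = s(s-1) - (2s-1)k + k^2,
\]
so the left-hand side splits into three sums, each of the form $\sum_{k=0}^s \alpha_k (2s)^k P^{(k)}(-2s)$ for simple coefficients $\alpha_k$ involving $1/k!$, $k/k!$, or $k^2/k!$.

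Next I would identify each sum via Taylor's theorem expanded about $-2s$. The key identity is
\[
\sum_{k=0}^s \frac{(2s)^k}{k!} P^{(k)}(-2s) = P(-2s + 2s) = P(0),
\]
and analogous identities obtained by differentiating (or, equivalently, by re-indexing). The $k$-term is handled by $k/k! = 1/(k-1)!$, which after the shift $k \mapsto j+1$ gives $2s\, P'(0)$. The $k^2$-term requires the decomposition $k^2 = k(k-1) + k$, hence $k^2/k! = 1/(k-2)! + 1/(k-1)!$ (with the convention $1/(-1)! = 0$ to handle $k=1$); these two pieces re-index to $4s^2 P''(0)$ and $2s\, P'(0)$. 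Collecting yields
\[
\mathrm{LHS} = P(0) - \frac{1}{s(s-1)}\left[(2s-1)\cdot 2s\, P'(0) - 4s^2 P''(0) - 2s\, P'(0)\right],
\]
which simplifies to $P(0) - 4 P'(0) + \frac{4s}{s-1} P''(0)$.

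Finally, substituting $P(0) = 1$, $P'(0) = a_1$, and $P''(0) = 2 a_2$ directly produces the right-hand side $1 - 4 a_1 + \frac{8s}{s-1} a_2$. The proof is essentially bookkeeping; the one step that warrants care is the re-indexing of the $k^2$-term for small $k$, where the convention for negative factorials must be handled correctly. Otherwise, there is no real obstacle, and the upper summation limit $s$ (rather than $\infty$) is harmless since $P^{(k)} \equiv 0$ for $k > s$.
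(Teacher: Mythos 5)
Your proof is correct, and it takes a genuinely different route from the paper's. The paper proves the lemma by expanding each $P^{(k)}(-2s)$ in terms of the coefficients $a_m$, interchanging the order of summation, and verifying that the resulting binomial sums reproduce the right-hand side coefficient by coefficient---a direct, somewhat opaque computation whose only hint is "simple binomial identities \ldots obtained after successively differentiating the binomial expansion of $(1+x)^n$." Your proof instead decomposes the weight $(s-k)(s-k-1) = s(s-1) - (2s-1)k + k(k-1) + k$ and recognizes each resulting sum as the Taylor expansion of $P$, $P'$, or $P''$ about $-2s$ evaluated at $0$ (valid because $\deg P \le s$), yielding $\mathrm{LHS} = P(0) - 4P'(0) + \frac{4s}{s-1}P''(0)$. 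This is the cleaner and more structural argument: it makes transparent \emph{why} only $a_0, a_1, a_2$ survive on the right-hand side (the weight is a degree-$2$ polynomial in $k$), whereas the paper's approach gives no such insight. The one spot that needs care, which you correctly flag, is the re-indexing in the $k^2$-term; the convention $1/(-1)! = 0$ handles the $k=1$ boundary correctly. Your arithmetic checks out: $(2s-1)\cdot 2s - 2s = 4s(s-1)$, so the $P'(0)$ coefficient is $-4$, and $P''(0) = 2a_2$ converts $\frac{4s}{s-1}P''(0)$ into $\frac{8s}{s-1}a_2$.
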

\begin{proof}
Direct comparison of the $a_m$ coefficients on both sides by using some simple binomial identities (that can be obtained, for example, after (successively) differentiating the binomial expansion of $x\mapsto (1+x)^n$).
\end{proof}

The preceding two lemmas are used in proving the following uniqueness result in a class of polynomials of degree at most $s$ (unfortunately only in a restricted range $s\le 8$) which are absolutely monotonic at $-2s$ and whose lowest order coefficients satisfy a certain non-linear relation. The origin of condition  (\ref{non-linear relation}) is of course inequality (\ref{non-linear relation origin}) in a slightly different context.

\begin{lem}\label{lemma2.5} 
Let us pick an integer $s$ with $3\le s\le 8$, and set $P(z):=1+\sum_{k=1}^s a_k z^k$ with some real coefficients $a_k$. Suppose
$P^{(k)}(-2s)\ge 0$ for all $k=0,1, \ldots, s$, further, that
\begin{equation}\label{non-linear relation}
\frac{1}{2}\left(a_1^2-\frac{(1-a_1)^2}{s}\right)\ge a_2. 
\end{equation}
Then $P(z)=\left(1+\frac{z}{2s}\right)^s$.
\end{lem}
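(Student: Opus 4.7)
My plan is to linearise the problem via the change of basis that already appears in the proof of Lemma~\ref{boundson} and to convert the non-linear relation~\eqref{non-linear relation} into a statement about the first two moments of an auxiliary probability distribution. Concretely, I would write
\[
P(z)=\sum_{k=0}^{s}\gamma_k\left(1+\frac{z}{2s}\right)^{k},\qquad \gamma_k:=\frac{P^{(k)}(-2s)}{k!}(2s)^k,
\]
so that the hypotheses $P^{(k)}(-2s)\ge 0$ together with $P(0)=1$ force $\gamma_k\ge 0$ and $\sum_{k=0}^{s}\gamma_k=1$; that is, $(\gamma_k)_{k=0}^{s}$ is a probability distribution on $\{0,1,\ldots,s\}$. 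Reading off the coefficients of $z$ and $z^{2}$ in the binomial expansion yields
\[
a_{1}=\frac{\mu}{2s},\qquad a_{2}=\frac{M_{2}-\mu}{8s^{2}},
\]
where $\mu:=\sum_{k}k\gamma_{k}$ and $M_{2}:=\sum_{k}k^{2}\gamma_{k}$ are the first two moments of this distribution.

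The next step is to substitute these expressions into \eqref{non-linear relation} and clear denominators. I expect the inequality to reduce, after a short routine computation, to
\[
s M_{2}\le (s-1)\mu^{2}+5s\mu-4s^{2}.
\]
The crucial observation is now the Cauchy--Schwarz inequality (equivalently, non-negativity of the variance of the random variable $X$ distributed according to $\gamma$): $M_{2}\ge\mu^{2}$. Plugging this lower bound into the display above and simplifying gives $\mu^{2}-5s\mu+4s^{2}\le 0$, i.e.\ $(\mu-s)(\mu-4s)\le 0$, so $\mu\ge s$. On the other hand, $\mu=\sum_{k}k\gamma_{k}\le s\sum_{k}\gamma_{k}=s$, and therefore $\mu=s$.

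Once $\mu=s$ is forced, every inequality in the chain must be saturated; in particular $M_{2}=\mu^{2}=s^{2}$, so $\gamma$ has vanishing variance and is supported at a single integer. Combined with $\mu=s$, this means $\gamma_{s}=1$ and $\gamma_{k}=0$ for $0\le k<s$, yielding $P(z)=(1+z/(2s))^{s}$. The only step that requires actual computation is the algebraic simplification leading to the clean moment inequality $sM_{2}\le(s-1)\mu^{2}+5s\mu-4s^{2}$; everything after that is structural. An appealing feature of this plan is that it bypasses Lemma~\ref{linearcombinationlemma} entirely and is insensitive to the value of $s$, which raises the natural question of whether the restriction $3\le s\le 8$ in the statement can be removed.
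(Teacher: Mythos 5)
Your proof is correct, and it takes a genuinely different and substantially cleaner route than the paper's. The paper's argument starts from the same change of basis (Lemma~\ref{boundson}) to get $a_n\ge 0$, but then invokes Lemma~\ref{linearcombinationlemma} to produce the auxiliary bound $1-4a_1+\frac{8s}{s-1}a_2\ge 0$, combines it with~\eqref{non-linear relation} to obtain the factored quadratic $(2a_1-1)\bigl(2a_1(s-1)+5-s\bigr)\ge 0$ in $a_1$, and then eliminates the case $a_1\le\frac12$ by a second quadratic estimate. That elimination step is precisely where $3\le s\le 8$ is needed: it requires $\frac{s-5}{2(s-1)}<\frac14$ and $\frac{1}{1+\sqrt{s}}>\frac14$, both of which fail at $s=9$. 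Your probabilistic reading of the $\gamma_k$'s sidesteps all of this. The reduction of~\eqref{non-linear relation} to
\[
sM_2\le (s-1)\mu^2+5s\mu-4s^2
\]
checks out, and the rest is exactly as you say: $M_2\ge\mu^2$ gives $(\mu-s)(\mu-4s)\le0$, hence $\mu\ge s$; the support bound gives $\mu\le s$; so $\mu=s$, whence $M_2=s^2$, zero variance, and $\gamma_s=1$. In particular you have removed the restriction to $3\le s\le 8$ — the argument works verbatim for every integer $s\ge 2$ — which answers the question the paper leaves open just after Lemma~\ref{lemma2.5} about extending the uniqueness result to general $s$. It is worth emphasizing that you never need the paper's Lemma~\ref{linearcombinationlemma}, and you also never need the coefficient bounds $a_n\ge 0$ from Lemma~\ref{boundson}: only the decomposition $P=\sum_k\gamma_k(1+z/(2s))^k$ with $\gamma_k\ge 0$ and $\sum\gamma_k=1$ is used.
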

\begin{proof}
By Lemma \ref{boundson}  with $r=2s$, we have $a_n\ge 0$ for all $n=1, 2, \ldots, s$. On the other hand, since now $P^{(k)}(-2s)\ge 0$ by assumption, by applying Lemma \ref{linearcombinationlemma} we get that
\[
1-4a_1+\frac{8s}{s-1}a_2\ge 0.
\]
From this inequality we estimate $a_2$ as $a_2\ge \frac{s-1}{8s}(4a_1-1)$, so $\frac{1}{2}\left(a_1^2-\frac{(1-a_1)^2}{s}\right)\ge \frac{s-1}{8s}(4a_1-1)$, which can be factorized as
\[
\frac{\left(2 a_1-1\right) \left(2 a_1(s-1)+5-s\right)}{8 s}\ge 0,
\] 
showing (by using $s>1$ only) that either 
\begin{equation}\label{1stcase}
a_1\le \frac{1}{2}\quad\textrm{and}\quad a_1\le\frac{s-5}{2(s-1)}
\end{equation}
or
\begin{equation}\label{2ndcase}
a_1\ge \frac{1}{2}\quad\textrm{and}\quad a_1\ge\frac{s-5}{2(s-1)}.
\end{equation}
We now show that (\ref{1stcase}) can not occur for $3\le s\le 8$: indeed, for any $1 < s < 9$, we have $\frac{s-5}{2(s-1)}=\frac{1}{2}-\frac{2}{s-1} < \frac{1}{4}$,
therefore $a_1 < \frac{1}{4}$. But then (\ref{non-linear relation}) says that
\[
0\le a_2\le \frac{1}{2}\left(a_1^2-\frac{(1-a_1)^2}{s}\right)=\frac{a_1^2 (s-1)+2 a_1-1}{2 s}, 
\]
so $a_1^2 (s-1)+2 a_1-1\ge 0$, implying either $a_1\le\frac{1}{1-\sqrt{s}} < 0$ or $a_1\ge\frac{1}{1+\sqrt{s}}>\frac{1}{1+\sqrt{9}}=\frac{1}{4}$. The first case is ruled out by $a_1\ge 0$, and the second one is by $a_1 < \frac{1}{4}$.

Hence the only possibility is (\ref{2ndcase}) above. But then $\frac{1}{2}\le a_1 \le  \frac{\binom{s}{1}}{r^1}=\frac{s}{2s}=\frac{1}{2}$ by Lemma \ref{boundson}, and so---again by the same lemma---we have $P(z)=\left(1+\frac{z}{2s}\right)^s.$ 
\end{proof}

\begin{rem}
Notice that \cite{kp} contains many uniqueness results for polynomials with maximal radius of absolute monotonicity, but polynomials in that article should approximate the exponential function near the origin to at least first order, \textit{i.e.,} $P(0)=P^{\prime}(0)=1$ should hold. In Lemma \ref{boundson}, only $P(0)=1$ is required, whereas in Lemma \ref{lemma2.5} we assume $P(0)=1$ together with the non-linear condition (\ref{non-linear relation}).
\end{rem}

\subsubsection{The structure of $\widehat{\Pi}_{s/s,p}$}\label{structureofPihatssp}

 Let us fix $3\le s\in \mathbb{N}$. The non-trivial $p$ values are $2\le p\le s+1$, but the $p=s+1$ case is special, because then $\widehat{\Pi}_{s/s,p}$ contains only finitely many functions. For $2\le p\le s$, the general form of $\psi \in \widehat{\Pi}_{s/s,p}$ reads as
\begin{equation}\label{structureofPihat}
\psi(z)=\frac{\displaystyle{\sum_{m=0}^p\left(\sum_{k=0}^m \frac{(-1)^k a^k}{(m-k)!}\binom{s}{k}\right) z^m+\sum_{m=p+1}^s a_m z^m}}{(1-a z)^s},
\end{equation}
with suitable real parameters $a, a_{p+1}, \ldots, a_s$, and, of course, with the usual convention that $\sum_{m=n}^{N}(\cdot)=0$, if $n>N$. Formula (\ref{generalSDIRKpsi}) is recovered if we choose $p=2$ in (\ref{structureofPihat}).

\begin{rem}\label{remark2.18onoptimalconjecturedavalues}
The conjectured optimal $\psi$ (\textit{i.e.,} the one with maximal radius of absolute monotonicity) in the $\widehat{\Pi}_{s/s,2}$ class has $a=\frac{1}{2s}$, whereas in the $\widehat{\Pi}_{s/s,3}$ class it has  
$a=\frac{1}{2}\left(1-\sqrt{\frac{s-1}{s+1}}\right)$: in light of Section \ref{determinationofRhatforsle4}, these values have been  
verified for $3\le s\le 4$. See  \cite[formulae (3.1) and (3.2)]{fs} also.
\end{rem}

\begin{rem}
The optimal $\psi$ rational function in the $\widehat{\Pi}_{s/s,2}$ class for $1\le s\le 4$ satisfies the relation $\psi(-z)=\frac{1}{\psi(z)}.$
\end{rem}

\subsubsection{Possible extension of certain proofs and some questions}\label{possibleextensions}

\begin{rem}\label{remark1.13aboutThm4.4}
In \cite[Theorem 4.4]{vdgk}, a necessary and sufficient condition is formulated for a general rational function $\psi$ satisfying  Assumptions \ding{192}--\ding{194} in our Section \ref{earlierresultssection} to be absolutely monotonic on an interval  $[x,0]\subset (-B(\psi),0]$. This condition, among others, requires checking the positivity of the "dominant coefficient" $c(\alpha_0,\mu(\alpha_0))$, being the numerator in the partial fraction decomposition of $\psi$  corresponding to the positive real pole $\alpha_0$ in  Definition \ref{Bdef} in our work, and having the highest pole order. It can be seen that $c(\alpha_0,\mu(\alpha_0))>0$
is equivalent to our necessary condition (\ref{limitcondition}) formulated in the $\widehat{\Pi}_{s/s,2}$ class, if "$\ge 0$" is replaced by "$>0$" in (\ref{limitcondition}). Moreover, "$=0$" in (\ref{limitcondition}) holds precisely if the numerator and denominator of $\psi$ have a common root. We add that we have not used \cite[Theorem 4.4]{vdgk} directly, because the explicit construction of its functions
$F(k,x)$, $L(x)$ and $K(x)$ (\textit{c.f.} our Remark \ref{remarkL(x)} also)---now possibly depending on an additional parameter---would not be straightforward. Instead, we reproduce the "partial fraction decomposition + factoring out the dominant term + checking the sign of the remainder in finitely many cases" idea behind the proof of \cite[Theorem 4.4]{vdgk} only after the optimal parameter value $a^*$ and the optimal radius of absolute monotonicity $R_{s/s,p}$ (or $\widehat{R}_{s/s,p}$) have been conjectured.
\end{rem}

\begin{rem}[(on the explicit computation of $\psi^{(k)}$)] The sum in
(\ref{psigeneralderivative}) has a bounded number of terms for all
$k\in\mathbb{N}$, yielding a nice representation of the derivatives of functions in $\Pihat_{s/s,2}$. 
For $\psi\in \Pi_{s/s,p}$, a similar representation would require a sum with unbounded number of terms as $k$ increases,
which precludes the extension of our analysis for $\psi \in \widehat{\Pi}_{s/s,2}$
to the more general class.
Fortunately, the optimal $\psi\in \Pi_{2/2,3}$ is also an element of $\widehat{\Pi}_{2/2,3}$.
\end{rem}

\begin{rem} In the proof of Theorem \ref{Thm2.2} we wish to show that  (under condition $a>0$) $\psi^{(k)}(-2s)\ge 0$ ($k\in\mathbb{N}$) implies uniqueness. Instead of appealing to formula (\ref{psigeneralderivative}) for $\psi^{(k)}$, one could first write up the partial fraction decomposition of $\psi$ (depending on the parameters $a, a_3, \ldots, a_s$), then differentiate, and get $\psi^{(k)}$ in a different form. In this form, 
the "basis elements" 
\[
1, k+1, (k+1)(k+2), \ldots, \Pi_{m=1}^{s-1}  (k+m)
\]
appear naturally in the numerators of the partial fraction decomposition of $\psi^{(k)}$. Some of our
observations suggest that changing this "rising" basis to the "falling" one 
\[
1, k-1, (k-1)(k-2), \ldots, \Pi_{m=1}^{s-1}  (k-m)
\]
could be advantageous: for the optimal $\psi$ function, coefficients in the numerators in the falling basis are simpler (in fact, for $3\le s \le 4$, they are $0, 0, \ldots, 0, \frac{1}{s! s^{s-1}}$) than in the rising basis. We add that the sum in (\ref{psigeneralderivative}) provides us with a third type of $k$-basis.  We do not know whether this approach could be used to characterize the optimal $\psi\in \widehat{\Pi}_{s/s,2}$.

\end{rem}


\begin{rem}[(on the point and interval conditions)]\label{gapconditionremark} 
In our proof of Theorem \ref{Thm2.2} for $3\le s \le 4$ we show that $a>0$, (\ref{firstgroup}) and (\ref{secondgroup}) imply uniqueness. According to our investigations with \Mma however, different 
formulations of absolute monotonicity of $\psi$ on $[-2s,0]$ can lead to the same uniqueness result.
For example, if $s=3$ and $a>0$, then 
\[
\psi^{(k)}(-6)\ge 0\quad (k=0,1,2) \quad \mathrm{and}\quad \forall x\in [-6,0] : \psi^{(4)}(x)\ge 0
\]
implies uniqueness. Moreover, if $s=3$, $a>0$ and $\lambda>0$, then
\[
\psi^{(k)}(-6)\ge 0\quad (k=0,1,2) \quad \mathrm{and}\quad \forall x\in [-\lambda,0] : \psi^{(4)}(x)\ge 0
\]
implies uniqueness if and only if $0.103209\le \lambda \le 10.5836$ (the exact values of these algebraic numbers are known). We also have the following observations on the "gaps" in the $a>0$ parameter region:
if $s\in \{3,4,5\}$, $a>0$ and $\psi^{(k)}(-2s)\ge 0$  ($k=0,1,\ldots, s-1$), then \[a\ge \frac{1}{2s}\]
(recall that for the optimal $\psi$, we conjecture $a=\frac{1}{2s}$, as mentioned in Remark  \ref{remark2.18onoptimalconjecturedavalues}).
Moreover, there are other gaps simultaneously implied by the above conditions:  

$\bullet$ the interval $\frac{1}{6}<a<\frac{1}{2}$, if $s=3$, 

$\bullet$ the interval $\frac{1}{8}<a<\frac{7}{24}$, if $s=4$, and 

$\bullet$ the interval $\frac{1}{10}<a<0.1835$ (with the upper bound being the root of a cubic polynomial), if $s=5$, are also excluded.
\end{rem}

\begin{rem}[(on the \textit{Mathematica} implementations)] 
This work could not have been completed without \textit{Mathematica}, 
and especially, without one of its key
commands in polynomial algebra, \texttt{Reduce}. 
  Both in the conjecture and proof phase, Mathematica's abilities to manipulate high-order root objects symbolically, numerically or graphically played a crucial role. Throughout our investigations, we tried to apply symbolic methods to the fullest
extent possible. At the same time, each pivotal formula was tested numerically
with several parameter values (some by using different \Mma algorithms). For
typical checks we have used 25 digits of precision, but when huge coefficients
were involved, we switched to 100 or 1000 digits of precision.
Results of the
symbolic computations and those given by the numerical algorithms were also compared
graphically, and found to be in perfect agreement. In fact, as a
by-product of our investigations we have discovered and reported a few minor
\Mma bugs, occurring with very low probability, in contexts when certain
algebraic relations between different parameter values are satisfied.
\end{rem}

Let us close this section by posing two natural questions.\\

\textbf{Question.}  What properties---\textit{e.g.,} what geometrical configuration of the roots and poles---of the rational function $\psi$ determine whether   
 $R(\psi)=B(\psi)$ or $R(\psi)<B(\psi)$? If $R(\psi)<B(\psi)$, then which roots of which derivatives of $\psi$ will limit the value of $R(\psi)$ in the sense of Theorem \ref{vdgkLemma4.5}?\\

\textbf{Question} (\textit{c.f.} the uniqueness of the optimal polynomials in \cite{kp}). Let us consider a given $(s,p)$ pair ($s\ge 1, p\ge 2$). Is there always a unique rational function $\psi=\frac{P}{Q}\in \Pi_{s/s,p}$ (or $\psi=\frac{P}{Q}\in \widehat{\Pi}_{s/s,p}$) with $P(0)=Q(0)=1$ such that $R(\psi)=R_{s/s,p}$ (or $R(\psi)=\widehat{R}_{s/s,p}$)?

\subsection{Notation used in the proof sections}\label{notationsection}


For $3\le n\in \mathbb{N}, n\ge m \in \mathbb{N}^+$ and $a_j\in\mathbb{R}$
($j=0,1,\ldots, n$) the $m^{\mathrm{th}}$ root of the polynomial equation $a_n
z^n+a_{n-1}z^{n-1}+\ldots +a_1 z+a_0=0$ will be denoted by
\[
\mathrm{root}_m(a_n,a_{n-1},\dots,a_1,a_0).
\] We use the following ordering for the
roots: real roots are listed first in increasing order, then non-real complex
roots follow as their real part increases, finally,  complex roots with
equal real part are sorted according to increasing imaginary part (in this
context, multiple roots will not be encountered).  So for example, the value of
root$_m(1,0,0,0,0,0,-1)$ for $m=1,2,\ldots,6$ is $-1, 1,
\frac{-1-\ii\sqrt{3}}{2}, \frac{-1+\ii\sqrt{3}}{2}, \frac{1-\ii\sqrt{3}}{2}$
and $\frac{1+\ii\sqrt{3}}{2}$, respectively, corresponding to the equation $z^6-1=0$. We will
give numerical approximations to these roots as well, typically to 6  digits, if they occur only in auxiliary computations, but more digits will be displayed for significant constants. We will use the $\approx$ symbol without rounding, \textit{i.e.}, in the sense that all shown digits are correct. However, we underline that these
approximations are only given for the sake of the reader's convenience, and the proofs are not based upon them.

In our arguments, several exact algebraic
numbers (typically denoted by $\vr_{spn}$), polynomials (denoted by $P_{spn}$) or other
auxiliary functions ($f_{spn}$) will appear. Here the subscripts $s$
and $p$ correspond to the ones in $\Pi_{s/s,p}$ or $\widehat{\Pi}_{s/s,p}$,
whereas the positive integer $n$  serves as a counter within the actual section
and is increased sequentially. Superscript  $^*$ will often be used to denote optimal values within the family (\textit{e.g.}, $a_{sp}^*$ being the parameter value corresponding to the rational function with maximal radius of absolute monotonicity in the class $\Pi_{s/s,p}$ or $\widehat{\Pi}_{s/s,p}$). If $\psi$ depends on a parameter, say, on $a$,  then the $k^\mathrm{th}$ derivative ($k\in\mathbb{N}$) of $z\mapsto \psi_a(z)$ is denoted by $\psi^{(k)}_a$. Moreover, we will often use the simpler forms $R(a)$ and $B(a)$ instead of $R(\psi_a)$ and $B(\psi_a)$, based on the bijection $a \leftrightarrow \psi_a$. 

\begin{rem}
We remark that the function $a\mapsto R(a)\in [0,+\infty]$ can be discontinuous. We will also see examples when the function $a\mapsto B(a)\in [0,+\infty]$ is non-differentiable, or convex on an interval and concave on another one.
\end{rem}

\section{Determination of $R_{2/2,3}$\label{sectionIRKs2}}

The functions in $\Pi_{2/2,3}$ have the form
\[
\psi_a(z)=
\frac{\left(\frac{1}{3}+\frac{a }{2}\right) z^2+(a +1) z+1}{\left(-\frac{a }{2}-\frac{1}{6}\right)z^2+a   z+1},\]
with $a\in\mathbb{R}$. For any $a\in\mathbb{R}$, the fraction can not be simplified, since the resultant of the numerator and the denominator w.r.t.\ $z$ is $\frac{1}{12}\ne 0$. This implies that $\psi_a(\cdot)$ does not have removable singularities, only pole(s). This last statement remains valid if $\psi_a(\cdot)$ is replaced by its $k^{\mathrm{th}}$ derivative ($k\ge 1$), since differentiation cannot introduce new removable or pole singularities, nor annihilate an existing pole. According to Theorem \ref{vdgkCorollary3.4}, $R(\psi_a)>0$ implies that $\psi_a$ has a positive real pole. This last condition holds if and only if
$a \le -1-\frac{1}{\sqrt{3}}\approx-1.57735$
or
$a\ge -1+\frac{1}{\sqrt{3}}\approx-0.42264$.

\begin{figure}[h]
  \centering
  \includegraphics[width=5in]{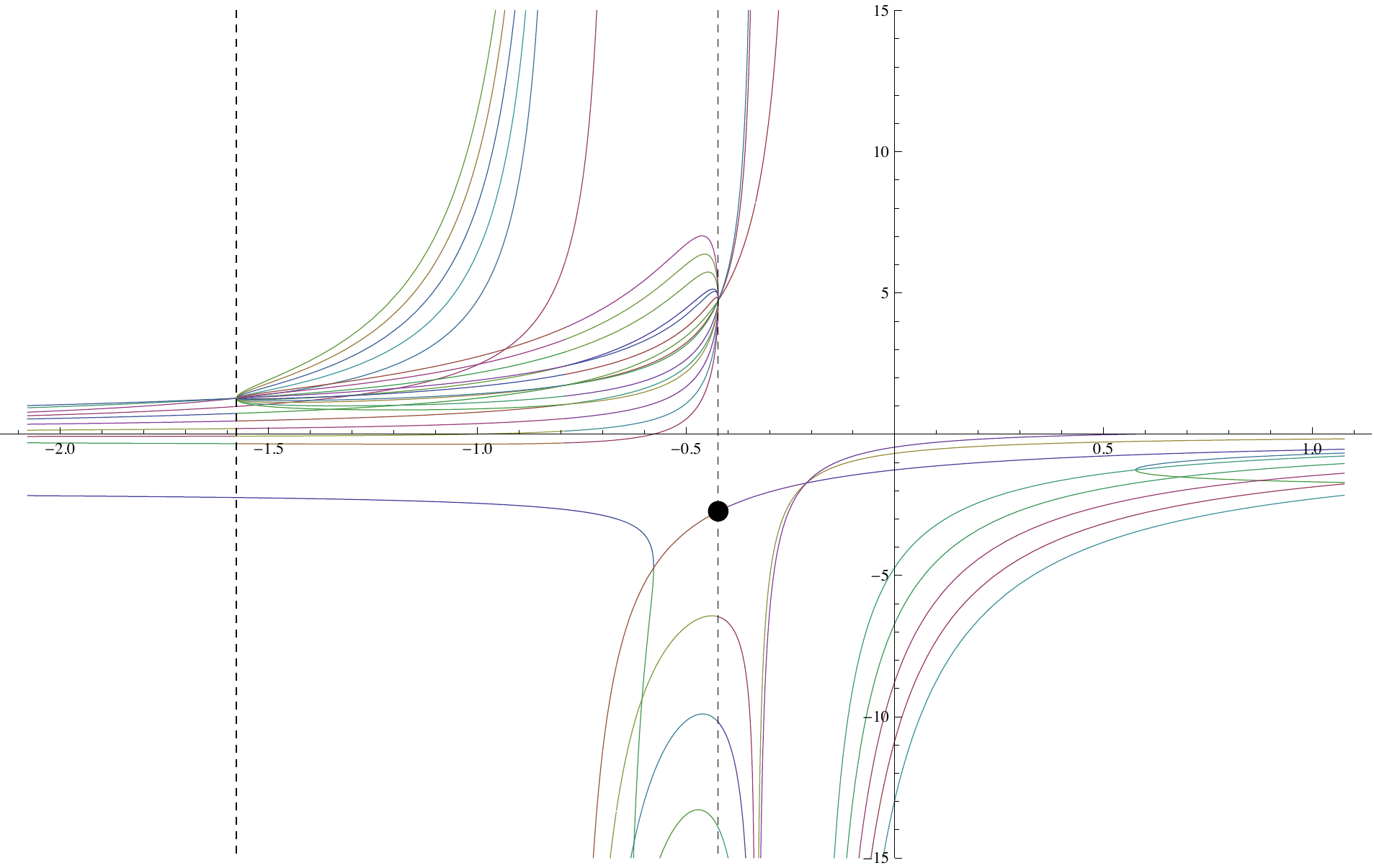}
  \caption{The figure shows roots of the derivatives $\psi_a^{(k)}(\cdot)$ as a
  function of $a$ for $0\le k\le 5$ for the family of functions in the $\Pi_{2/2,3}$ class.  
  The parameter region between the vertical dashed lines $\left(\mathit{i.e.,\
  }-1-\frac{1}{\sqrt{3}}<a<-1+\frac{1}{\sqrt{3}}\right)$ should be ignored,
  since here $R(\psi_a)=0$. The black dot indicates the optimal $a$ parameter
  value and (the negative of) the optimal radius of absolute monotonicity
  within this class.}\label{231figure}
\end{figure}

Figure \ref{231figure} suggests that the maximal radius of absolute
monotonicity within this class is $1+\sqrt{3}$, corresponding to parameter
value $a^*_{23}=-1+\frac{1}{\sqrt{3}}$ (when the rational function has a pole
of maximal possible order, \textit{i.e.}, 2). 

For $a \le -1-\frac{1}{\sqrt{3}}$, it is easily seen that $\psi_a$ has only
positive real poles.  Furthermore, we have $\psi_a(-2)=\frac{1}{1-12 a}>0$ and
$\psi_a\left(-\frac{5}{2}\right)=-\frac{15 a+14}{135 a+1}<0$, so by the intermediate value theorem $\psi_a(\cdot)$ has a root in $\left(-\frac{5}{2},-2\right)$, implying by Theorem \ref{vdgkLemma4.5} with $\ell=0$ that for these $a$ values $R(\psi_a)\le \frac{5}{2}< 1+\sqrt{3}.$

On the other hand, if $a>-1+\frac{5}{3 \sqrt{3}}>-1+\frac{1}{\sqrt{3}}$, then $\psi_a$ has a pole in $(-1-\sqrt{3},0)$, meaning that $R(\psi_a)< 1+\sqrt{3}$ here, since $\psi_a$ is not defined on the whole $(-1-\sqrt{3},0]$. Now we are going to exclude the parameter region $a\in \left( -1+\frac{1}{\sqrt{3}},-1+\frac{5}{3 \sqrt{3}}\right]$. If $a=-1+\frac{5}{3 \sqrt{3}}$, then $R(\psi_a)\le 2<1+\sqrt{3}$, because $\psi^\prime_a(-2)<0$. 

Let $a\in \left( -1+\frac{1}{\sqrt{3}},-1+\frac{5}{3 \sqrt{3}}\right)$ be fixed.  Then
$\psi^\prime_a(-1-\sqrt{3})<0<\psi^\prime_a(0)$,  so $\psi^\prime_a$ has a root in $(-1-\sqrt{3},0)$ and then---by Theorem \ref{vdgkLemma4.5} with $\ell=1$---inequality $R(\psi_a)< 1+\sqrt{3}$ holds.

Finally, we see that if $a=a^*_{23}=-1+\frac{1}{\sqrt{3}}$, then
\[
\psi_{a^*_{23}}(x)=\frac{\left(2+\sqrt{3}\right) \left(\left(\sqrt{3}-1\right) x^2+2 \sqrt{3}\ x+6\right)}{\left(\sqrt{3}+3-x\right)^2},
\]
from which one can prove recursively for $k\ge 1$ that 
\[
\psi_{a^*_{23}}^{(k)}(x)=\frac{6 k! \left((12 k-3)+\sqrt{3} (7 k-2)+\left(3+2 \sqrt{3}\right) x\right)}{\left(\sqrt{3}+3-x\right)^{k+2} }.
\]
With the help of these formulae, one verifies that
$\psi_{a^*_{23}}^{(k)}(x)\ge 0$ for all $k\ge 0$ and $-1-\sqrt{3}\le x \le 0$,
proving our claim.

\section{Determination of $R_{3/3,p}$ for $5\le p\le 6$ and a lower bound when $p=2$}

\subsection{Determination of $R_{3/3,6}$\label{IRKs3p6section}}
The
set $\Pi_{3/3,6}$ consists of the unique member
$
\psi_{36}(z)=\frac{\frac{z^3}{120}+\frac{z^2}{10}+\frac{z}{2}+1}{-\frac{z^3}{120}+\frac{z^2}{10}-\frac{z}{2}+1}.
$
This Pad\'e approximation of the exponential function has radius of absolute monotonicity 
\[
R(\psi_{36})=\text{root}_1(1,-6,0,40)=
-2+\sqrt[3]{4} \left( \sqrt[3]{3-\sqrt{5}}+ \sqrt[3]{3+\sqrt{5}}\right)\approx 2.207606.
\]
To prove it, we first determine $B(\psi_{36})$ (see Definition \ref{Bdef}). Let
$\alpha_0:=\text{root}_1(1,-12,60,-120)\approx 4.64437$ and
$\alpha_1:=\text{root}_2(1,-12,60,-120)\approx 3.67781-\ii\cdot 3.50876$ denote
the real and one of the complex roots of the denominator of $\psi_{36}$.  Then
$x=x^*:=-B(\psi_{36})<0$ is the unique real solution of the equation
\[
|\alpha_0-x|=|\alpha_1-x|.
\]
From this we see that $B(\psi_{36})=\text{root}_1(1,-6,0,40)$.  From Theorem \ref{vdgkTheorem3.3} we know that $R(\psi_{36})\le B(\psi_{36})$.  In order to show that $R(\psi_{36})=B(\psi_{36})$, we verify that $\psi_{36}^{(k)}(x^*)\ge 0$ for any $k\ge 0$. 
(Then Theorem \ref{absmon_int} with $x=x^*$ guarantees absolute monotonicity on $[-B(\psi_{36}),0]$, since the denominator of $\psi_{36}$ does not vanish on, say, $(-\infty,0]$.) The partial fraction decomposition of $\psi_{36}$ is
$
\psi_{36}(x)=
-1+\frac{c_0}{\alpha_0-x}+\frac{c_1}{\alpha_1-x}+\frac{\overline{c_1}}{\overline{\alpha_1}-x},
$
with $c_0:=\text{root}_1(1,-24,-1200,-40000)\approx 57.2025$ and $c_1:=\text{root}_3(1,-24,-1200,-40000)\approx -16.6012+\ii\cdot 20.5831$. But
$\psi_{36}(x^*)=\text{root}_1(1,-15,2127,-233)>0$, and for $k\ge 1$ we have
\[
\psi_{36}^{(k)}(x^*)=k! (\alpha_0-x^*)^{-k-1}\left( c_0+c_1\left(\frac{\alpha_0-x^*}{\alpha_1-x^*}\right)^{k+1}+
\overline{c_1}\left(\frac{\alpha_0-x^*}{\overline{\alpha_1}-x^*}\right)^{k+1}\right).
\]
Since $\left|\frac{\alpha_0-x^*}{\alpha_1-x^*}\right|=\left|\frac{\alpha_0-x^*}{\overline{\alpha_1}-x^*}\right|=1$ by construction and $52.8874\approx |c_1|+|\overline{c_1}|<c_0$, the positivity of $\psi_{36}^{(k)}(x^*)$ follows.

\subsection{Determination of $R_{3/3,5}$\label{sectionIRKs3p5}}
The set $\Pi_{3/3,5}$ can be written in terms of one real parameter $a\in\mathbb{R}$ as
\[
\psi_a(z)=\frac{\left(a+\frac{1}{60}\right) z^3+\left(6 a+\frac{3}{20}\right) z^2+\left(12 a+\frac{3}{5}\right) z+1}{a z^3+\left(\frac{1}{20}-6 a\right) z^2+\left(12 a-\frac{2}{5}\right) z+1}.
\]
The numerator and  denominator do not have a common root for any $a$, because their resultant w.r.t.\ $z$ is $\frac{1}{8640}\ne 0$.

Theorem \ref{vdgkCorollary3.4} tells us that if $R(\psi_a)>0$, then $\psi_a$ has at least one positive real pole.  We claim that the statement $\exists\  x>0\ :\ a x^3+\left(\frac{1}{20}-6 a\right) x^2+\left(12 a-\frac{2}{5}\right) x+1=0$ implies $a<0$. Indeed, after a rearrangement we get
\begin{equation}\label{a_s3p5}
a=\frac{-x^2+8 x-20}{20 x \left(x^2-6 x+12\right)}<0,
\end{equation}
since the numerator is negative and the denominator is positive for all $x>0$.

Next we prove via the intermediate value theorem that for $a\le -\frac{1}{50}$, the $10^{\mathrm{th}}$ derivative of $\psi_a(\cdot)$ has a root between $-2$ and $0$:  indeed, for $a\le-\frac{1}{50}$
\[
\psi_a^{(10)}(-2)<0<\psi_a^{(10)}(0).
\]
The formal proof of this last statement is completely analogous to that of the
corresponding statement in the $\Pi_{4/4,7}$ class in the opening
paragraphs of Section \ref{sectionIRKs4p7},  hence we omit the details here. The
only thing we have to verify is that the intermediate value theorem is
applicable in $[-2,0]$. On one hand, the poles of $\psi_a^{(10)}(\cdot)$
coincide with those of $\psi_a(\cdot)$ (only their order can be different). On
the other hand, for any fixed $a<0$, the function $\psi_a(\cdot)$ does not have
any poles in $[-2,0]$ (nor in $(-\infty,0]$), because for
$x\in\mathbb{R}\setminus\{0\}$, inequality (\ref{a_s3p5}) implies $x>0$. This
means that for $a\le-\frac{1}{50}$ we have $R(\psi_a)\le 2$
due to Theorem \ref{vdgkLemma4.5} with $\ell=10$. 

So in order to get $R(\psi_a)>2$, it is enough to consider $a\in \left(-\frac{1}{50},0\right)$. As it turns out, the maximal radius of absolute monotonicity for these rational functions will be determined by the maximum of the function $a\mapsto B(a)\equiv B(\psi_a)$  according to Definition \ref{Bdef} and Theorem \ref{vdgkTheorem3.3}.

First we give the real and imaginary parts of the three poles (even for all $a<0$), but due to symmetry, it is enough to take into account only the closed upper half-plane. If $a<0$, then the real pole is located at \[\vr_{351}(a)=\text{root}_1(20 a,1-120 a,240 a-8,20).\] As for the other pole, first 
we set \[\vr_{352}=\text{root}_1(864000,43200,360,1)=-\frac{1}{120} \left(2+\sqrt[3]{2}+\sqrt[3]{4}\right)\approx -0.0403944.\] Then we consider the denominator of $\psi_a$, substitute $z=x+\ii y$, separate the real and imaginary parts and solve the resulting system: if $\vr_{352}\ne a<0$, then the upper complex pole is found at 
\[
\vr_{353}(a)+
\frac{\ii}{4 \sqrt{5 |a|}}\sqrt{\frac{14400 a^2+1200 a+4-(240 a+1 )\vr _{353}(a)}{60 a (2-\vr _{353}(a))-1}}
\]
with \[\vr_{353}(a)=\text{root}_1(1600 a^2,-9600 a^2+80 a,19200 a^2-400 a+1,-14400 a^2+400 a-4),\] while for $a=\vr_{352}$, the upper complex pole is located at
$4-\sqrt[3]{4}+\ii \left(2 \sqrt[3]{2}+\sqrt[3]{4}-2\right) \approx 2.4126+\ii \cdot 2.10724$. A case separation is necessary in the above formula to avoid a  $0/0$ under the square root:  geometrically this $a=\vr_{352}$ value corresponds to the case when the three poles have the same real part.

\begin{figure}[h]
  \centering
  \includegraphics[width=3in]{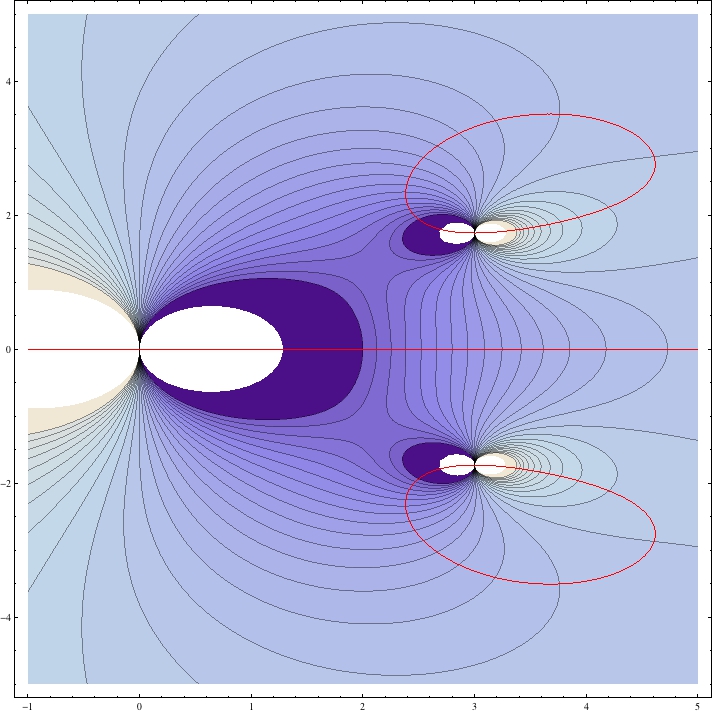}
  \caption{By substituting $z=x+\ii y$ ($x, y\in\mathbb{R}$) into the fraction in (\ref{a_s3p5}) and separating  real and imaginary parts, two $\mathbb{R}^2\to\mathbb{R}$ surfaces ($S_{\text{re}}$ and $S_{\text{im}}$) are obtained. The contour $S_{\text{im}}=0$ yields the equation $y (x^4-16 x^3+2 x^2 y^2+$ $96 x^2-16 x y^2-240 x+y^4+16 y^2+240)=0$ depicted as red curves (with three connected components). They describe the locus of the poles of $\psi_a$ for $a\in\mathbb{R}$ (including the limiting case $a=\pm\infty$) on the complex plane in the $\Pi_{3/3,5}$ class. For a particular, fixed $a\in\mathbb{R}$, the three poles of $\psi_a$ are found when the red curves intersect the contour curves $S_{\text{re}}=a$  (some of which are shown as blue curves). The poles in the $a=\pm\infty$ limiting case are located near the three singular points (squeezed in between the white regions), where $S_{\text{re}}$ and $S_{\text{im}}$ are undefined.}\label{351figure}
\end{figure}

Now with the real and imaginary parts separated, it is easy to find, for any $a<0$ the unique point on the real axis being equidistant from the real pole and the upper complex pole. We check that for $a<\vr_{352}$ the (positive) real pole is strictly smaller than the real part of the (upper) complex pole, hence---by applying the notations of Definition \ref{Bdef}  with $\alpha_0\equiv \alpha_0(a)$ being the positive real pole---the equidistant point on the real axis is strictly positive, meaning that $0\in I(\alpha_0(a))$ and $B(a)=-\inf I(\alpha_0(a))=+\infty$ here. We easily see that $B(\vr_{352})=+\infty$ also, but for $\vr_{352}<a<0$, $B(a)$ will be finite. If $\vr_{352}<a<0$, then the point on the real axis equidistant from the real pole and the complex pole is found at $\frac{f_{351}(a)}{f_{352}(a)}$ with $f_{351}(a):=$
\[
\vr^2_{351}(a) \left(60 a \vr _{353}(a)-120a+1\right)- 80 a \vr^3_{353}(a)-(2-240 a) \vr^2_{353}(a)- (240 a-8) \vr _{353}(a)-20
\]
and
\[
f_{352}(a):=2 \left(\vr _{351}(a)-\vr _{353}(a)\right) \left(60 a \vr _{353}(a)-120a+1\right).
\]
Now \Reduce\  establishes quickly that $\frac{f_{351}(a)}{f_{352}(a)}$ has a unique zero in $\vr_{352}<a<0$: by defining  \[\vr_{354}=\text{root}_1(13824000,345600,2880,-152,-1)\approx -0.00625485,\] it turns out that   \[\frac{f_{351}}{f_{352}}\Big|_{(\vr_{352},\vr_{354})}<0,\quad \frac{f_{351}}{f_{352}}\Big|_{a=\vr_{354}}=0,\quad \frac{f_{351}}{f_{352}}\Big|_{(\vr_{354},0)}>0.\]
(However, our "manual" computations to reproduce this sign property \textit{without} \Reduce\ occupy several pages in the corresponding \Mma notebook when written out fully, so we only sketch the main steps. From the equation $f_{351}(a)=0$ we simply express $\vr_{351}(a)$ as
$
\vr_{351}(a)=\pm\frac{\sqrt{2} \sqrt{40 a \vr^3_{353}(a)-\ldots}}{\sqrt{60 a \vr_{353}(a)-\ldots}},
$
where we have displayed (and will display) only some (typical) "highest order" terms to indicate some structure. This $\pm$ expression is then substituted back in place of $x$ into the defining polynomial equation $20 a x^3+(1-120 a) x^2+(240 a-8) x+20=0$ of the root object  $\vr_{351}(a)$. After clearing the denominators, we arrive at a relation of the form
\[
\mp(40 a (120 a-1) \vr^3_{353}(a)-\ldots)\sqrt{60 a \vr_{353}(a)-\ldots}=\]
\[\sqrt{32}\cdot (200 a^2 \vr^3_{353}(a)+\ldots)\sqrt{40 a \vr^3_{353}(a)-\ldots}
\]
with the square roots being the same as above in the $\pm$ expression. A squaring and rearranging cancel the $\mp$ symbol and the square roots, and yield a polynomial in $a$ and in $\vr_{353}(a)$ of the form $-51200000 a^5 \vr^9_{353}(a)+\ldots+80=0$, with $39$ terms omitted in between. Now we express $\vr^3_{353}(a)$ from the defining polynomial equation of this root object $\vr_{353}(a)$ as $\vr^3_{353}(a)=$
\[
\frac{1}{1600 a^2}\left( \left(9600 a^2-80 a\right) \vr^2_{353}(a)+\left(-19200 a^2+400 a-1\right)\vr_{353}(a)+14400 a^2-400 a+4 \right).
\]
This formula is applied recursively within the above equation $-51200000 a^5 \vr^9_{353}(a)+\ldots+80=0$ to eliminate all the powers $\vr^k_{353}(a)$ with $k=9, 8, \ldots, 3$ and to have 
\[
P_{353}(a)+P_{354}(a)\vr_{353}(a)+P_{355}(a)\vr^2_{353}(a)=0,
\]
where $P_{35\ell}(a)$ ($\ell=3,4,5$) are suitable polynomials in $a$ of degree 7 with integer coefficients having at most 14 digits. The above quadratic polynomial is now solved to get
\[
\vr_{353}(a)=\frac{-P_{354}(a)\pm\sqrt{P^2_{354}(a)-4P_{355}(a)  P_{353}(a)}}{2 P_{355}(a)},
\] 
which information is then substituted back into the very defining polynomial equation of the root object $\vr_{353}(a)$, and we can derive the equation
$
P_{356}(a)\pm P_{357}(a)\sqrt{P_{358}(a)}=0.
$ 
(Here $P_{356}(a)$ is a polynomial in $a$ of degree 18 with integer coefficients having at most 35 digits. The degree of the integer polynomial $P_{357}(a)$ is 10, while that of $P_{358}(a)$ is 14.) A final squaring and rearrangement allow a factorization of the form
\[
0=a \left(864000 a^3+43200 a^2+360 a+1\right) \left(13824000 a^4+345600 a^3+2880 a^2-152 a-1\right)\times\]\[ \left(17280000 a^4+1728000 a^3+57600 a^2+440 a+1\right)\times\]\[
   \left(248832000000 a^6+26956800000 a^5+639360000 a^4+14112000 a^3+34800 a^2-310 a-1\right).
\]
Now we see that only 3 out of the 18 roots of the above polynomial are located in the (real) interval $(\vr_{352},0)$, and a check (similar in spirit to the manipulations so far) yields that 2 out of these 3 are \textit{not} solutions to  the equation $f_{353}(a)=0$, just the third one. (Of course, we have checked during all the computations that the quantities appearing under the square roots are non-negative, further, that the possible vanishing leading coefficients of the intermediate linear or quadratic equations do not yield extra solutions to  $f_{353}(a)=0$.) Summarizing the above, we have proved independently of \Reduce\ that the equation $f_{351}(a)=0$ has a unique solution $\vr_{354}$ in $(\vr_{352},0)$, and a continuity argument yields also the signs of the fraction $\frac{f_{351}(a)}{f_{352}(a)}$ on the subintervals.)

Now let us proceed further. Knowing the sign of $\frac{f_{351}(a)}{f_{352}(a)}$ implies that on $a\in(\vr_{352},0)\setminus\{\vr_{354}\}$ relation $0\in I(\alpha_0(a))$ can hold only for $a\in (\vr_{352},\vr_{354})$, so 
\[B(a)=0,\quad \mathrm{if\ } a\in [\vr_{354},0),\]
 and 
\[B(a)=-\inf I(\alpha_0(a))=-\frac{f_{351}(a)}{f_{352}(a)},\quad \mathrm{if\ } a\in (\vr_{352},\vr_{354}).\]

\begin{figure}[h]
  \centering
  \includegraphics[width=5in]{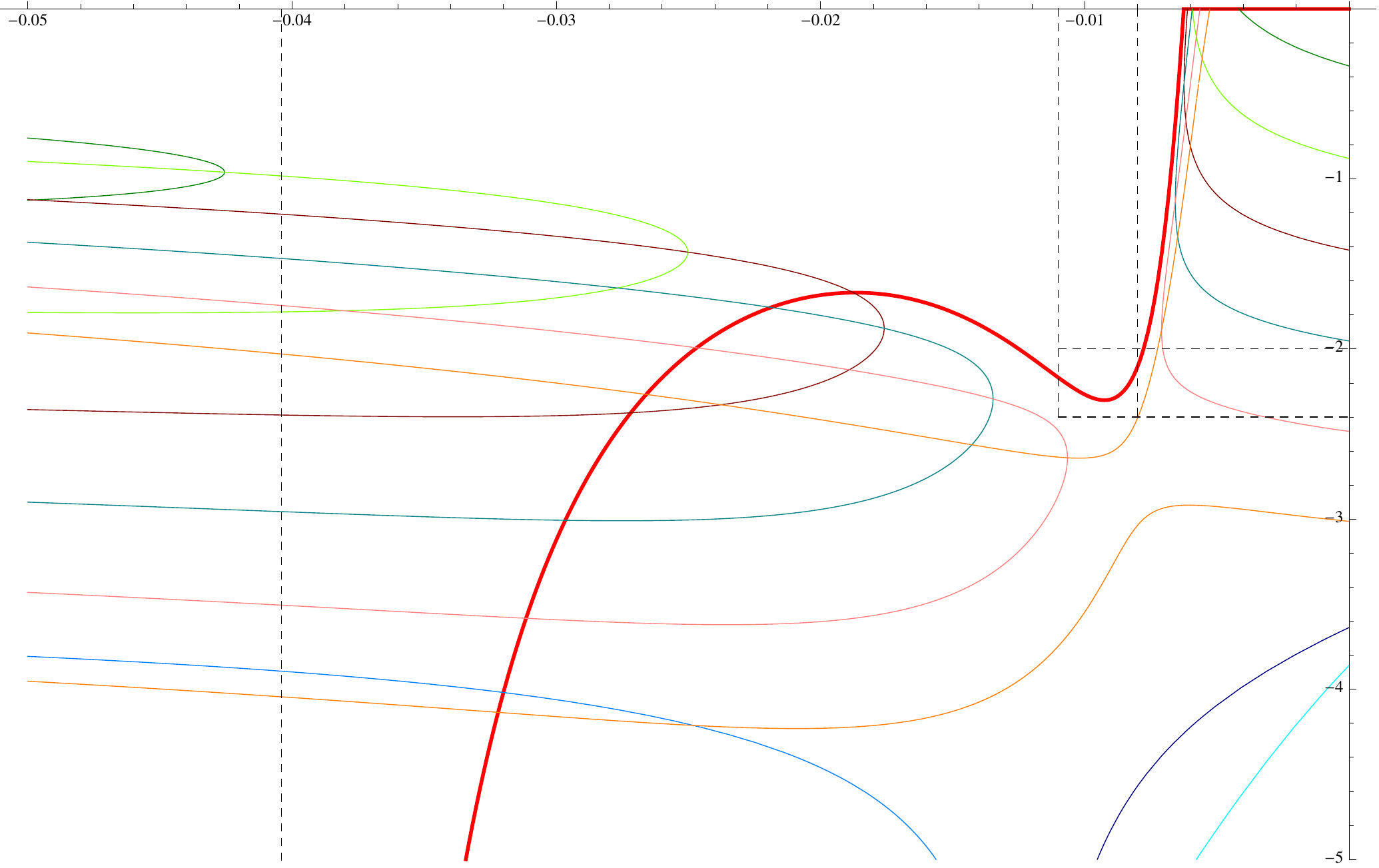}
  \caption{In the $\Pi_{3/3,5}$ class the function $a\mapsto -B(a)$ is shown as the thick red curve (with  $-B\equiv -\infty$ to the left of the long, vertical dashed line). The small dashed rectangle will contain the optimal value of $-B$. The other curves are the roots of the derivatives $\psi^{(k)}_a(\cdot)$ for $k=0, 1, 2, 8, 9, 10, 11, 12, 13$. In this plot window, these are the only roots of $\psi^{(k)}_a(\cdot)$ for $k\le 13$. We remark that altogether 17947 digits are needed just to write down the integer polynomials appearing in the numerators of $\psi^{(k)}_a(\cdot)$  for these $k$ values.  The structures analogous to pitchfork bifurcations in this figure nicely illustrate Rolle's theorem, further, the fact that if a smooth function has a root of multiplicity $m$ at a point, then its derivative also has a root there (with multiplicity $m-1$).}\label{352figure}
\end{figure}

Having completely described the function $B$, in the next step we will determine its maximal value  on $a\in \left(-\frac{1}{50},0\right)$ by differentiation. We check by computing the discriminant of the denominator of $\psi_a$ that all roots of the denominator are simple for all $a<0$. Moreover, this denominator is smooth in $x$ and $a$, so the implicit function theorem yields that the functions $a\mapsto \vr_{351}(a)$ and $a\mapsto \vr_{353}(a)$ are differentiable with derivatives
\[
\vr^{\prime}_{351}(a)=-\frac{20 \vr^3_{351}(a)-120 \vr^2_{351}(a)+240 \vr _{351}(a)}{60 a \vr^2_{351}(a)+2 (1-120 a) \vr _{351}(a)+240 a-8}
\]
and
\[
\vr^{\prime}_{353}(a)=-\frac{3200 a \vr^3_{353}(a)+(-19200 a+80) \vr^2_{353}(a)+(38400 a-400) \vr_{353}(a)-28800 a+400}{4800 a^2 \vr^2_{353}(a)+2 \left(-9600 a^2+80 a\right) \vr _{353}(a)+19200 a^2-400 a+1}.
\]
Using also these formulae we determine that there are exactly two $a$ values in $\left(-\frac{1}{50},0\right)$ such that $B^{\prime}(a)=0$, one is a local minimum and the other one is a local maximum of $B$. But since we are maximizing $B$, the local minimum is ignored. The local maximum of $B$ is located at 
\[
a^*_{35}:=
\]
\[
\text{root}_2(13436928000000,1492992000000,68428800000,1427328000,13867200,61200,101) \approx\]
\[
-0.009257142292762484937472363
\]
with value 
\[
B(a^*_{35})=\text{root}_2(1,12,36,-76,-360,0,900)\approx 2.301322934003485801187482.
\]
Note that Figure \ref{352figure} depicts $-B$ so that it can be compared to the derivatives of $\psi_a(\cdot)$. We remark that the local maximum of $-B\big|_{\left(-\frac{1}{50},0\right)}$ is described by the same root object as $a_{35}^*$ but with $\mathrm{root}_2$ replaced by $\mathrm{root}_1$. \Reduce\  was able to solve the equation $B^{\prime}(a)=0$ within 2 seconds. When we solved the same equation "manually" (\textit{i.e.},\ in a \Reduce-independent way) with all the details written out fully, the corresponding \Mma notebook saved as a PDF file consisted of approx.\ 280 pages, and the intermediate expressions could be described by approximately 700\,000 characters. The final integer polynomial with both root objects $\vr _{351}(a)$ and $\vr _{353}(a)$ eliminated has degree $162$ and leading coefficient approx.\ $4\cdot 10^{311}$. In view of the above, it is very surprising that the algebraic number $a_{35}^*$ is so simple: the reason is that the final polynomial of degree 162 can be written as $-64000 a^4 P_{359}^3(a) P_{3510}^3(a) P_{3511}^{12}(a) P_{3512}^2(a) P_{3513}(a)$ with the polynomials $P_{35\ell}$ ($\ell=9,10,\ldots, 13$) having degrees 10, 26, 3,  4 and 6, respectively, further, $P_{3513}(a_{35}^*)=0$.

As a final step, we show that the interval $[-B\left(a_{35}^*\right),0]$ does not contain any roots of the derivatives $\psi^{(k)}_{a_{35}^*}(\cdot)$ ($k=0,1, \ldots $). In view of  Theorem \ref{absmon_int}  with  $x=-B\left(a_{35}^*\right)$, it is enough to show that for all $0\le k\in\mathbb{N}$ we have
\[
\psi_{a_{35}^*}^{(k)}\left(-B\left(a_{35}^*\right) \right)\ge 0,
\]
which we check by partial fraction decomposition (also showing that there are no poles of $\psi_{a_{35}^*}$ in $[-B\left(a_{35}^*\right),0]$, so Theorem \ref{absmon_int} applies).
The function  $\psi_{a_{35}^*}$ evaluated at $x$ admits the following decomposition:
\[
c+\frac{c_0}{\alpha_0-x}+\frac{c_1}{\alpha_1-x}+\frac{\overline{c_1}}{\overline{\alpha_1}-x}
\]
with some $c, c_0\in\mathbb{R}$, $\alpha_0 >0$ and $c_1, \alpha_1\in\mathbb{C}\setminus\mathbb{R}$. These constants are algebraic numbers of degree not exceeding 12, and absolute value of the  coefficients of their defining integer polynomials less that $10^{26}$. Here we give only their numerical approximations as
\[
c\approx -0.800411, \quad c_0\approx 46.829419, \quad \alpha_0\approx 4.449434, 
\]
\[
c_1\approx -13.750016- i\cdot 16.640148, \quad \alpha_1\approx 3.475900+i\cdot 3.492336.
\]
With $x=x^*:=-B\left(a_{35}^*\right)<0$ we check that 
\[
0.09999997\approx \psi_{a_{35}^*}(x^*) >0.
\]
On the other hand, if $k\ge 1$, then $\psi_{a_{35}^*}^{(k)}(x^*)$ takes the form 
\[
k! (\alpha_0-x^*)^{-k-1}\left( c_0+c_1\left(\frac{\alpha_0-x^*}{\alpha_1-x^*}\right)^{k+1}+
\overline{c_1}\left(\frac{\alpha_0-x^*}{\overline{\alpha_1}-x^*}\right)^{k+1}\right).
\]
Now we make use of the facts that $\alpha_0-x^*>0$ and $\left|\frac{\alpha_0-x^*}{\alpha_1-x^*}\right|=\left|\frac{\alpha_0-x^*}{\overline{\alpha_1}-x^*}\right|=1$ by construction, further, that $c_0>0$ and $43.172096\approx |c_1|+|\overline{c_1}|<c_0$. This proves that $\psi_{a_{35}^*}^{(k)}(x^*)>0$ also for $k\ge 1$.

Summarizing the above, the optimal radius of absolute monotonicity within the $\Pi_{3/3,5}$ class is $B\left(a_{35}^*\right)\approx 2.301322$.

\subsection{A counterexample to the conjecture that $R_{s/s,2}=2s$\label{counterexamplesection}}
A counterexample to Conjecture \ref{conj:vdgk}
for the case $m=n=3$ was found via extensive numerical search using MATLAB and the Symbolic Toolbox
employing the {\texttt{fminsearch}} function (Nelder--Mead simplex method); it
originally appeared in \cite{dkthesis}. For the sake of our presentation, let
us denote this counterexample with rational coefficients by
$\widetilde{\psi}_{32}$.\footnote{We note that there is a typo in the last
digit of one of its coefficients in \cite{dkthesis}, instead of
$7969150767159903$, one should have $7969150767159904$.} Now we are going to
present a simpler counterexample---in the sense that the rational coefficients
have much smaller numerators and denominators---with slightly improved radius
of absolute monotonicity.

By suitably perturbing the rational coefficients into nearby simpler ones and embedding $\widetilde{\psi}_{32}$ 
into a one-parameter family of rational functions in a way that the order conditions are satisfied within the family, say, as 
\[\psi_c(z)=\frac{c z^3+\frac{2289}{34970}z^2+\frac{119}{269}z+1}{-\frac{4}{327}z^3+\frac{8}{65}z^2-\frac{150}{269}z+1}\] with $c\in\mathbb{R}$, we can optimize the radius of absolute monotonicity w.r.t.\ $c$ using \textit{Mathematica}.  It turns out that the optimal parameter $c^*$ (\textit{i.e.}, the one that yields the maximal $R$ within this chosen class)  is an algebraic number of degree 5. By replacing 
$c^*$ with a nearby simple rational number we get, for example, 
\[
\psi_{32}(z) = \frac{\frac{1246}{384649}z^3+\frac{2289}{34970}z^2+\frac{119}{269}z+1}{-\frac{4}{327}z^3+\frac{8}{65}z^2-\frac{150}{269}z+1}.
\]
This function has \[R(\psi_{32})=\text{root}_1(43572620,-880461561,5950520030,-13451175530)\approx\]\[
6.778307398562974637718719>6.\]
We remark that $6.77823595\approx R(\widetilde{\psi}_{32})<R(\psi_{32})<R(\psi_{c^*})\approx 6.77830907$.

Let us briefly give some details that can be used to verify the above value of $R(\psi_{32})$. The function $\psi_{32}$ at $x$ has the following partial fraction decomposition
\[
c+\frac{c_0}{\alpha_0-x}+\frac{c_1}{\alpha_1-x}+\frac{\overline{c_1}}{\overline{\alpha_1}-x}
\]
with $c=-\frac{203721}{769298}$, $c_0\approx 24.8122$, $\alpha_0\approx 3.71417$, $c_1\approx -8.39838- i\cdot 9.53528$ and $\alpha_1\approx 3.17368+i\cdot 3.45514$.
 (These are all algebraic numbers of degree 3, and with maximal absolute value of the coefficients in their defining integer polynomials approximately $1.046\cdot 10^{42}$.) We see that $\psi_{32}$ has a unique real root at $x^*:=-R(\psi_{32})\approx -6.778307$, so Theorem \ref{vdgkLemma4.5} with $x=x^*$ and  $\ell=0$ applies.  We claim that $\psi_{32}^{(k)}(x^*)\ge 0$ holds for all $k\ge 1$ as well. If $k\ge 1$, then 
\[
\psi_{32}^{(k)}(x^*)=k! (\alpha_0-x^*)^{-k-1}\left( c_0+c_1\left(\frac{\alpha_0-x^*}{\alpha_1-x^*}\right)^{k+1}+
\overline{c_1}\left(\frac{\alpha_0-x^*}{\overline{\alpha_1}-x^*}\right)^{k+1}\right).
\]
Now since $\alpha_0-x^*>0$ and $\left|\frac{\alpha_0-x^*}{\alpha_1-x^*}\right|=\left|\frac{\alpha_0-x^*}{\overline{\alpha_1}-x^*}\right|\approx 0.995991$ (an algebraic number of degree 18, with 92-digit integers as coefficients), we see that the sufficient condition 
\[
2|c_1|\left|\frac{\alpha_0-x^*}{\alpha_1-x^*}\right|^{k+1}\le c_0
\]
for the non-negativity of $\psi_{32}^{(k)}(x^*)$ holds for $k\ge 5$. Finally, we directly check that $\psi_{32}^{(k)}(x^*)>0$ is also valid for $1\le k\le 4$. (We remark that now $B(\psi_{32})\approx 7.59982$ is an algebraic number of degree 3, so for this particular function $R(\psi_{32})<B(\psi_{32})$.) Absolute monotonicity in the whole $[x^*,0]$ interval is guaranteed by Theorem \ref{absmon_int}, by taking into account that there are no poles of $\psi_{32}$ in $[x^*,0]$.

Of course, it is to be emphasized that the maximal radius of absolute monotonicity in the \textit{whole} 
$\Pi_{3/3,2}$ class is still unknown: this class of rational functions can be described by 4 parameters. 
It is a major open challenge to find the maximal $R$ within this 4-parameter family.

\section{Determination of $R_{4/4,7}$}\label{sectionIRKs4p7}

The set $\Pi_{4/4,7}$ can be described by one real parameter $a\in\mathbb{R}$ as follows
\[
\psi_a(z)=\frac{\frac{1}{840} (7 a+4) z^4+\frac{1}{210} (21 a+13) z^3+\left(-\frac{1}{14} (7 a+2)+a+\frac{1}{2}\right) z^2+(a+1) z+1}{-\frac{1}{840} (7 a+3) z^4+\frac{1}{210} (21 a+8)
   z^3-\frac{1}{14} (7 a+2) z^2+a z+1}.
\]
The numerator and  denominator do not have common factors for any $a$, since their resultant  w.r.t.\ $z$ is $\frac{1}{870912000}\ne 0$.
In order to have $R(\psi_a)>0$, $\psi_a$ needs to have at least one positive real pole (Theorem \ref{vdgkCorollary3.4}). To obtain some preliminary information on the location of the real poles of $x\mapsto\psi_a(x)$, we apply the same trick as at the beginning of Section \ref{sectionIRKs3p5}. This time the denominator of $\psi_a(x)$ (with $x\in\mathbb{R}$) vanishes if and only if 
\[
a=\frac{-3 x^4+32 x^3-120 x^2+840}{7 x \left(x^3-12 x^2+60 x-120\right)}.
\]
Then, by analyzing the range of this rational function on the right-hand side (under the restriction $x>0$), we prove that $\psi_a$ has at least one positive real pole precisely if  
\[
a\le \vr_{471}\approx -0.843194\quad\mathrm{or}\quad a\ge \vr_{472}\approx -0.471357,
\] 
so in the rest of this section we can consider only the above parameter set. We remark that the exact values of these and the following algebraic numbers are listed in the Appendix. Furthermore, with this technique we also establish that for any $a<0$ the denominator of $\psi_a(\cdot)$ does not vanish in $[-1,0]$.

Now we show that in the left unbounded component, that is for $a\in (-\infty,\vr_{471}]$, we have 
$
R(a)\equiv R(\psi_a)\le 1.
$
To this end, let us consider the 9$^\mathrm{th}$ derivative  of $x\mapsto\psi_a(x)$, which can be written as
\[
 P_{47}(x,a)\cdot
1451520\left((7 a+3) x^4+(-84 a-32) x^3+(420 a+120) x^2-840 a
   x-840\right)^{-10},
\]
where  $P_{47}(x,a)$ is a polynomial in $x$ (of degree 30) and $a$ (of degree 10), and 5917 digits are needed to write down all of its integer coefficients. Since, as we have just seen in the previous paragraph, the $(-10)^{\mathrm{th}}$ power is never singular for $a\le \vr_{471}<0$ and $x\in [-1,0]$, we can  apply the intermediate value theorem in the interval $[-1,0]$. On one hand, \[P_{47}(0,a)=227219566849228800000000-433782809439436800000000 a^2<0\] for 
$a\in (-\infty,\vr_{471}]$. On the other hand
$
P_{47}(-1,a)=P_{471}(a)$, a polynomial of degree 10 with integer coefficients. But $P_{471}^{(10)}$ is a positive constant, and for 
$k=9, 8, \ldots , 0$ one computes that $\mathrm{sgn}\left(P_{471}^{(k)}\left(-\frac{1}{2}\right)\right)=(-1)^k$, showing inductively that for any $ a\in(-\infty,-\frac{1}{2}]$ and $k=9, 8, \ldots, 0$ we have $\mathrm{sgn}\left(P_{471}^{(k)}\left(a\right)\right)=(-1)^k$ (notice that we have used only rational arithmetic). Taking into account  $ (-\infty,-\frac{1}{2}]\supset (-\infty,\vr_{471}]$, we get that $P_{471}\big|_{(-\infty,\vr_{471}]}>0$. The intermediate value theorem now guarantees that for each $a\in (-\infty,\vr_{471}]$, the function $\psi_a^{(9)}(\cdot)$ has a root in $[-1,0]$. Due to Theorem \ref{vdgkLemma4.5} with $\ell=9$, we have $R(\psi_a)\le 1$
for $a\in (-\infty,\vr_{471}]$.\\

Next we focus our attention on the right unbounded component $a\in [\vr_{472},+\infty)$. In this region we are going to explicitly compute the function $a\mapsto B(a)\equiv B(\psi_a)$ (according to Definition \ref{Bdef}).

As a first step, 
\Reduce\  was able to give a complete description of the locus of the poles in the complex plane with real and imaginary parts separated as $a$ is varied. If $a=\vr_{472}$, we have a real pole of order 2 at $\vr_{473}\approx 7.64527$.  For
$\vr_{472}< a<-\frac{3}{7}$, it bifurcates into two real poles (of order 1) located at
$
\vr_{4711}(a)\in (\vr_{474},\vr_{473}),
$
and 
$
\vr_{4712}(a)\in (\vr_{473},+\infty),
$
with $\vr_{474}\approx 5.64849$. If  $a=-\frac{3}{7}$, then there is a unique real pole of order 1 at
$\vr_{474}$. Finally, for  $a>-\frac{3}{7}$ the smaller (negative) real pole is located at
$
\vr_{4711}(a)<0,
$
while the larger (positive) pole is found at 
$
\vr_{4712}(a)
$. 

As for the complex poles, due to symmetry, it is enough to describe only the one with imaginary part $>0$. For $a\in \left[\vr_{472},-\frac{3}{7}\right]$, the complex pole in the upper half-plane has real part $\vr_{477}(a)$ and imaginary part 
\[
\sqrt{\frac{(7 a +3)\vr_{477}^3(a)-(63 a+24) \vr_{477}^2(a)+(210 a +60)\vr_{477}(a)-210 a}{(7 a+3) \vr_{477}(a)-21 a-8}}.
\]
If $a> -\frac{3}{7}$, then the expressions for the real and the imaginary parts of the upper complex pole are obtained as above, but with changing $\text{root}_1$ in the definition of $\vr_{477}(a)$ (see the Appendix) to $\text{root}_2$.

Figure \ref{471figure} depicts the locus of the poles of $\psi_a$ as $a$ is varied. For the sake of completeness, we have also included the poles corresponding to the interval $a\in (-\infty,\vr_{472})$.

\begin{figure}[h]
  \centering
  \includegraphics[width=5in]{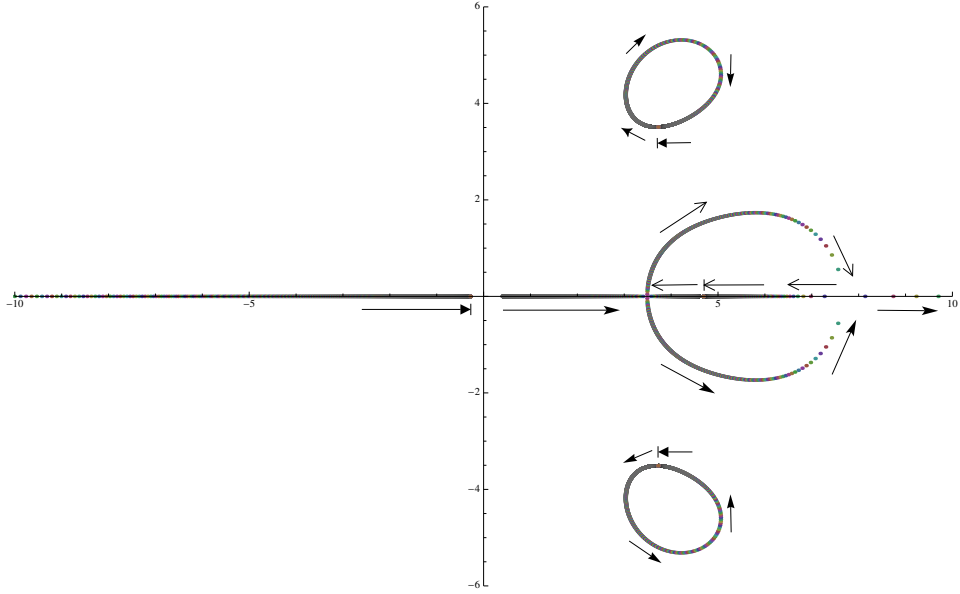}
  \caption{The trajectory of the four poles of $\psi_a$ in the complex plane in the $\Pi_{4/4,7}$ class as parameter $a$ traverses through the real line from $-\infty$ to $+\infty$. As $a\to -3/7^{-}$ and passes through this point, the larger real pole is repelled to $+\infty$, then it enters the real line again from $-\infty$ and approaches to $0^{-}$. (In order to create the figure, we have used an equidistant grid on $a\in[-3,3]$ and \textit{Mathematica}'s \NSolve\  approximated the roots of the resulting 6001 quartic equations appearing as denominators in a few seconds.) Compare these curves with the corresponding red curves in Figure \ref{351figure}.}\label{471figure}
\end{figure}

Now we combine the above formulae on the locus of the poles, real and imaginary parts separated, to determine $B(a)$ for each $a\in [\vr_{472},+\infty)$. With the help of \Reduce\  (and by applying some geometric reformulations to be able to obtain the results in a reasonable amount of computing time, but now omitting the details here) we can prove that for $-0.471357\approx \vr_{472}\le a <\vr_{478}\approx -0.469514$ the point on the real axis equidistant from the smaller (positive) real pole and the upper complex pole is positive (\textit{i.e.}, $0$ lies strictly closer to the non-real upper complex pole for $a\in [\vr_{472},\vr_{478})$), hence (in the sense of Definition \ref{Bdef}) 
$0\notin I(\alpha_0)$, so $B(a)=0$ here. 
The larger real pole---existing for $a\in \left(\vr_{472},-\frac{3}{7}\right)$--- clearly cannot influence the value of $B(a)$ now. For $a\in \left[\vr_{478},-\frac{3}{7}\right]$, the equidistant point on the real axis from the smaller positive real pole and the upper complex pole is non-positive, and is given by $\frac{f_{471}(a)}{f_{472}(a)}$ with 
\[
f_{471}(a):=2 (7 a+3) \vr _{477}^3(a)-4 (21 a+8) \vr^2 _{477}(a)+30 (7 a+2) \vr _{477}(a)+\]\[\left(21 a+8-(7 a+3) \vr _{477}(a)\right) \vr^2 _{4711}(a)-210 a
\]
and 
\[
f_{472}(a):=2 \left((7 a+3) \vr
   _{477}(a)-21 a-8\right) \left(\vr _{477}(a)-\vr _{4711}(a)\right).
\] 
(Of course, as already noted earlier, the expression "smaller positive real pole" in the previous sentence should be interpreted as the "unique real pole", if $a=-\frac{3}{7}$. Moreover, the expression "non-positive" can be replaced by "zero" if $a=\vr_{478}$, and "negative" if $a\in\left(\vr_{478},-\frac{3}{7}\right]$.) Finally, we study the interval $a>-\frac{3}{7}$. Now one should take into account the newly created negative real pole as well.  It can be proved that if $a=\vr_{479}\approx -0.358565$, then there is a unique negative real number, $\text{root}_1(1,-21,165,-520,0,3600,-6000)\approx -2.40614$ such that it is equidistant from all the four poles. If $a\in\left(-\frac{3}{7},\vr_{479}\right]$, then the negative real pole is still too far on the left to have an effect on $B(a)$, so for these $a$ values $B(a)$ is obtained as the absolute value of the
point on the real axis equidistant from the positive real pole and the upper complex pole. Here $B(a)$ will be given in terms of   
$
f_{473}(a)
$
and
$
f_{474}(a)
$. Expression $
f_{473}(a)
$ is defined just as $
f_{471}(a)
$, but with each $\text{root}_1$ occurring in the definition of $\vr_{477}(a)$ replaced by $\text{root}_2$, further, with $\vr _{4711}(a)$ replaced by $\vr _{4712}(a)$. Expression $
f_{474}(a)
$ is obtained from $
f_{472}(a)
$ via the same two replacement rules. If $a>\vr_{479}$, then the 
\[(\text{equidistant point on the real axis from the positive real pole and the upper complex pole})_1 
\]
is smaller than the
\[ (\text{equidistant point on the real axis from the positive real pole and the negative real pole})_2,\] hence here $B(a)$ is determined solely by the two real poles. Notice that quantity $(...)_{2}$ is simply the midpoint between the two real poles.  For \[-0.358565\approx \vr_{479}<a\le \vr_{4710} \approx -0.274796,\] 
the expression  $(...)_{2}$  is non-positive, so its absolute value gives $B(a)$. But if  $a>\vr_{4710}$, then $(...)_{2}$ is strictly positive, so 
$0\notin I(\alpha_0)$ again, thus $B(a)=0$ here.

We now summarize the above information in one formula for $B(a)$. (For completeness' sake we also provide some additional information on $B(a)$ in the interval $a\in(-\infty,\vr_{472}]$ already investigated earlier. We remark that  $a=\vr_{475}\approx -0.850052$ corresponds to the geometric configuration when $\psi_a$ has three poles---one positive real and two complex poles---with equal real part, and the 4$^\mathrm{th}$, real pole lies to the right.) We have

\[ 
B(a)  \begin{cases}
= +\infty & \text{if } a\in(-\infty,\vr_{475}], \\
\ge \vr_{476} & \text{if } a\in(\vr_{475},\vr_{471}],\\
= 0 & \text{if } a\in(\vr_{471},\vr_{472}),\\
= 0 & \text{if } a\in[\vr_{472},\vr_{478}],\\
=  \left|\frac{f_{471}(a)}{f_{472}(a)}\right|& \text{if } a\in\left(\vr_{478},-\frac{3}{7}\right],\\
=  \,\left|\frac{f_{473}(a)}{f_{474}(a)}\right| & \text{if } a\in\left(-\frac{3}{7},\vr_{479}\right],\\
=  \frac{1}{2}\left|\vr_{4711}(a)+\vr_{4712}(a)\right| & \text{if } a\in\left(\vr_{479},\vr_{4710}\right],\\
=  0 & \text{if } a\in\left(\vr_{4710},+\infty\right)
\end{cases}
\]
with $\vr_{476}\approx 21.5907$. 

After describing the function $B$, we find its maximal value on $a\in [\vr_{472},+\infty)$, or, equivalently, on  $[\vr_{472},\vr_{4710}]$. \textit{Mathematica}'s $\texttt{Maximize}$ was finally able to locate this unique point.  It turns out that $B\big|_{[\vr_{472},\vr_{4710}]}$ attains its maximal value at
\[
a_{47}^*\approx -0.4398493860002001824004494,
\]
where $a_{47}^*$ is the smaller real root of a polynomial with integer coefficients and of degree 30. The corresponding maximal value is 
\[B\left(a_{47}^*\right)\approx 2.743911895676330804848228,\]
expressible as the larger real root of a polynomial with integer coefficients and again of degree 30.

\begin{figure}[h]
  \centering
  \includegraphics[width=5in]{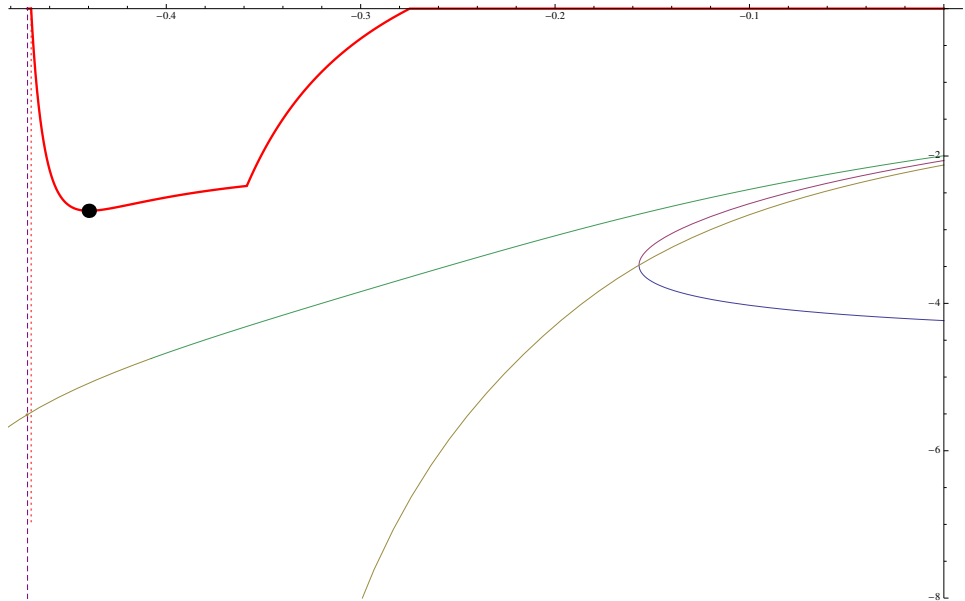}
  \caption{The graph of the function $a\mapsto -B(a)$ (red, thick curve carrying the black dot) for $\vr_{472}\le a\le 0$ is depicted together with the roots of the $0^\mathrm{th}$ and $1^\mathrm{st}$ derivatives of $\psi_a(\cdot)$ in the $\Pi_{4/4,7}$ class. $B(a)=0$ also between the two vertical dashed lines on the left. The distance between the black dot and the horizontal axis is the optimal radius of absolute monotonicity.}\label{472figure}
\end{figure}

As a last step, we check that no roots of the derivatives of the function $\psi_{a_{47}^*}(\cdot)$ can enter the interval $[-B\left(a_{47}^*\right),0]$, so its radius of absolute monotonicity is as large as it can be (Theorem \ref{vdgkTheorem3.3}), based exclusively on the location of the poles: in view of Theorem \ref{absmon_int} with $x=-B\left(a_{47}^*\right)$, it is enough to show that for all $0\le k\in\mathbb{N}$ we have
\[
\psi_{a_{47}^*}^{(k)}\left(-B\left(a_{47}^*\right) \right)\ge 0.
\]
Again, the most convenient form to use is the partial fraction decomposition of $\psi_{a_{47}^*}(x)$
(this decomposition also reveals that $\psi_{a_{47}^*}$ has no poles in the interval $[-B\left(a_{47}^*\right),0]$, hence Theorem \ref{absmon_int} is applicable). However, in this case, due to the high degree polynomial involved in the definition of $a_{47}^*$, \Mma  could not find the exact partial fraction decomposition of $\psi_{a_{47}^*}(x)$ in a reasonable amount of time (\textit{i.e.}, expressing the poles and the coefficients of the partial fractions as explicit and exact algebraic numbers). 
In order to overcome this difficulty,  we have employed complex interval arithmetic with rational endpoints. 
We started from good enough lower and upper rational bounds on $a_{47}^*$, $B\left(a_{47}^*\right)$ and the 4 poles: we applied validated numerical algorithms (such as \texttt{IsolatingInterval}, with tolerance $10^{-20}$) in the case of real algebraic numbers, and higher precision evaluation in the case of complex roots, which have been previously shown to be the unique roots within larger rational rectangles in the complex plane by \texttt{IsolatingInterval}. Then we also expressed the coefficients of the partial fraction decomposition in terms of the poles and $a_{47}^*$ symbolically in advance. At the end, we were able to give rigorous lower and upper rational bounds on the (absolute value of the) coefficients using only rational arithmetic without any difficulty.  These computations however produced quite lengthy outputs, since the numerators and denominators of some intermediate rational numbers in the bounding intervals consisted of integers with more than 220 digits. With the above simple interval technique we were able to completely reproduce the numerical partial fraction decomposition of $\psi_{a_{47}^*}(x)$ obtained directly in a much simpler manner.  

The partial fraction decomposition of $\psi_{a_{47}^*}(x)$ has the following form:
\[
c+\frac{c_0}{\alpha_0-x}+\frac{c_1}{\alpha_1-x}+\frac{c_2}{\alpha_2-x}+\frac{\overline{c_2}}{\overline{\alpha_2}-x},
\]
with some $c, c_0, c_1\in\mathbb{R}$, $\alpha_0, \alpha_1 >0$ and $c_2, \alpha_2\in\mathbb{C}\setminus\mathbb{R}$, further, the following numerical approximations are valid (to simplify our presentation, we now omit listing any exact rational bounds):
\[
c\approx 11.666934, \quad c_0\approx 202.617318, \quad \alpha_0\approx 5.779490, 
\]
\[
c_1\approx -1084.668490, \quad \alpha_1\approx 47.331517,
\]
\[
c_2\approx -19.880058- i\cdot 86.974803, \quad \alpha_2\approx 4.778363+i\cdot 4.007962.
\]
If $k\ge 1$, then $\psi_{a_{47}^*}^{(k)}(x)$ can be written as 
\[
k! (\alpha_0-x)^{-k-1}\left( c_0+c_1\left(\frac{\alpha_0-x}{\alpha_1-x}\right)^{k+1}
+c_2\left(\frac{\alpha_0-x}{\alpha_2-x}\right)^{k+1}+
\overline{c_2}\left(\frac{\alpha_0-x}{\overline{\alpha_2}-x}\right)^{k+1}\right).
\]
Now we evaluate the above expression at $x=x^*:=-B\left(a_{47}^*\right)<0$. 
We see by construction that $\left| \frac{\alpha_0-x^*}{\alpha_2-x^*}\right|=\left| \frac{\alpha_0-x^*}{\overline{\alpha_2}-x^*}\right|=1$, and 
it can also be proved that $0<\frac{\alpha_0-x^*}{\alpha_1-x^*}<\frac{1}{5}$. So by taking into account  $\alpha_0>0,-x^*>0$ and $c_0>0$, a sufficient condition for the positivity of $\psi_{a_{47}^*}^{(k)}(x^*)$ ($k\ge 1$) is that
$
|c_1|\cdot \left(\frac{1}{5}\right)^{k+1}+2|c_2|\cdot 1< c_0,
$
or, by using the rigorous bounds $|c_1|<1085$, $|c_2|<90$ and $c_0>202$, a weaker sufficient condition is given by 
$
1085\cdot \left(\frac{1}{5}\right)^{k+1}+180< 202,
$
which is seen to hold if $k\ge 2$. But for $k=0$ and $k=1$ one checks directly (with simple rational bounds on $a_{47}^*$ and 
$-B\left(a_{47}^*\right)$, and even without using partial fraction decomposition) that $\psi_{a_{47}^*}^{(k)}\left(-B\left(a_{47}^*\right) \right)> 0$ .

The above computations confirm that the optimal radius of absolute monotonicity within the $\Pi_{4/4,7}$ class is $B\left(a_{47}^*\right)\approx 2.743911$.

\section{Determination of $\widehat{R}_{2/2,2}$ and $\widehat{R}_{3/3,p}$ for $2\le p \le 4$}\label{sectionSDIRKs2s3}

 As an introduction to the $\widehat{\Pi}_{s/s,p}$ class, we first list the two optimal rational functions in the 
$\widehat{\Pi}_{2/2,p}$ family for $p=2$ and $p=3$. 

In the former case, for the sake of completeness, we also provide two direct proofs of the equality $\widehat{R}_{2/2,2}=4$. In the beginning of Section \ref{determinationRhatp2s34}, we have already indicated that this equality simply follows from the corresponding result in \cite{vdgk}. Our first proof will be similar to the earlier ones---carried out in one-parameter families of rational functions, and not knowing the optimum in advance; while the second proof is based on the same idea that we will apply in the $\widehat{\Pi}_{s/s,2}$ cases with $3\le s\le 4$---having more than one parameter, but with an already conjectured optimum.
\begin{lem}\label{lemma61.rhat2/2,2=4}
$\widehat{R}_{2/2,2}=4$.
\end{lem}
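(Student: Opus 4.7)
The plan is to parametrize the class by the single parameter $a$ and then give two short, complementary proofs illustrating the two strategies that will reappear throughout the paper. Specialising (\ref{generalSDIRKpsi}) to $s=2$ (or, equivalently, imposing $\psi(0)=\psi'(0)=1$ and $\psi''(0)=1$ on $\psi=P/(1-az)^2$), every $\psi\in\widehat{\Pi}_{2/2,2}$ takes the one-parameter form
\[
\psi_a(z) = \frac{1 + (1-2a)z + (\tfrac{1}{2} - 2a + a^2)z^2}{(1-az)^2}, \qquad a\in\mathbb{R}.
\]
The conjectured optimum is $a^*=1/4$, for which $\psi_{1/4}(z)=(1+z/4)^2/(1-z/4)^2$ already attains $R=4$ (the $s=2$ case of the $s$-stage implicit midpoint stability function cited in the introduction). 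The task is therefore to show that no other $a$ yields a larger radius.

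\textbf{Strategy 1 (one-parameter direct approach, as used in Sections~\ref{sectionIRKs2}, \ref{sectionIRKs3p5}, \ref{IRKs3p6section} and \ref{sectionIRKs4p7}).} First exclude $a\le 0$ by the argument of Lemma~\ref{a_pos}: for $a=0$ the function is a degree-$2$, order-$2$ polynomial, hence $R\le s-1=1$ by \cite[Theorem~2.1]{kp}; for $a<0$ the only pole is negative and a short resultant check (the critical value $a=1/2$ being excluded) shows numerator and denominator of $\psi_a$ have no common root, so $R=0$ by Theorem~\ref{vdgkCorollary3.4}. For $a>0$ the sole pole sits at $z=1/a>0$, whence $B(\psi_a)=+\infty$ and the bound on $R(\psi_a)$ can come only from Theorem~\ref{vdgkLemma4.5}. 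A direct calculation gives
\[
\psi_a(-4) = \frac{16(a-1/4)(a-5/4)}{(1+4a)^2}, \qquad \psi_a'(-4) = \frac{6a(4a-1)}{(1+4a)^3},
\]
so in view of $\psi_a(0)=\psi_a'(0)=1>0$ and the intermediate value theorem, the conjunction $\psi_a(-4)\ge 0$, $\psi_a'(-4)\ge 0$ with $a>0$ forces $a=1/4$ or $a\ge 5/4$. The second branch is eliminated by the closed-form expression
\[
\psi_a^{(k)}(-4) = \frac{k!\, a^{k-2}\bigl(24a^2 - 2(k+2)a + (k-1)\bigr)}{2(1+4a)^{k+2}} \qquad (k\ge 1),
\]
obtained from the partial fraction decomposition of $\psi_a$: its bracket becomes negative for any fixed $a>1/2$ once $k$ is sufficiently large. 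At $a=1/4$ the same bracket simplifies to $(k-1)/2\ge 0$, so $\psi_{1/4}^{(k)}(-4)\ge 0$ for every $k\ge 0$, and Theorem~\ref{absmon_int} upgrades this point condition to absolute monotonicity on the whole of $[-4,0]$.

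\textbf{Strategy 2 (uniqueness argument, as in Theorem~\ref{Thm2.2} for $s\ge 3$).} Assume instead that $R(\psi_a)=4$, so $\psi_a^{(k)}(-4)\ge 0$ for every $k\ge 0$; Lemma~\ref{a_pos} then yields $a>0$. The two pointwise inequalities displayed above, together with the limit condition (\ref{limitcondition}) specialised to $s=2$ and $x=-4$, form a small polynomial system in the single unknown $a>0$ whose only solution is $a=1/4$; the verification that $R(\psi_{1/4})=4$ then proceeds exactly as in the last step of Strategy~1. The main obstacle common to both strategies is also the one that dominates the $s\ge 3$ proofs: after the elementary exclusions, the spurious branch $a\ge 5/4$ arising from the second factor of $\psi_a(-4)$ must be killed, either by invoking a single high-order derivative at $-4$ (Strategy~1) or by subsuming it into the asymptotic limit condition (Strategy~2).
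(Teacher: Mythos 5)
Your Strategy~2 is essentially the paper's second proof verbatim (pointwise conditions at $-4$, then the leading-coefficient/limit condition from~(\ref{limitcondition}) to kill the branch $a\ge 5/4$), and it is correct. But your Strategy~1 is \emph{not} the paper's first proof: it is another variant of the second proof. The paper's first proof is deliberately organised so as \emph{not} to presuppose the value $4$ --- it proceeds parameter-region by parameter-region, exactly as in Sections~\ref{sectionIRKs2}--\ref{sectionIRKs4p7}: for $a\ge 1$ it exhibits a root of $\psi_a^{(4)}$ in $[-1,0]$ via $\psi_a^{(4)}(0)<0<\psi_a^{(4)}(-1)$; for $a\in(0,1)$ with $a>1/4$ it writes down an explicit negative root of $\psi_a$ lying in $(-4,0)$; for $a<1/4$ it uses the unique root of $\psi_a'$; and only at the very end does the number $4$ emerge at $a=1/4$. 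Your Strategy~1 instead evaluates everything at $x=-4$ from the outset and kills the spurious branch $a\ge 5/4$ by observing that the bracket $(1-2a)k+24a^2-4a-1$ turns negative for large $k$ when $a>1/2$; this is logically sound (a negative value of some $\psi_a^{(k)}(-4)$ immediately forces $R(\psi_a)<4$ by continuity), but it is the same idea as the limit-condition step of Strategy~2, just phrased in terms of a concrete large $k$ rather than the asymptotic leading coefficient. So you have effectively written the paper's second proof twice, and the genuine methodological contrast that the paper is illustrating with its two proofs is lost.

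Two smaller points. First, your displayed derivative at $-4$ is wrong: one computes $\psi_a'(-4)=\dfrac{12a-3}{(1+4a)^3}=\dfrac{3(4a-1)}{(1+4a)^3}$, not $\dfrac{6a(4a-1)}{(1+4a)^3}$ (your own general formula for $\psi_a^{(k)}(-4)$ specialised at $k=1$ gives the correct value, so this is an internal inconsistency); the sign analysis on $a>0$ is unaffected, but the formula should be corrected. Second, in the exclusion of $a<0$ you invoke a ``resultant check (the critical value $a=1/2$ being excluded)'' --- the reference to $a=1/2$ is vacuous when $a<0$; the relevant fact (as the paper states) is simply that numerator and denominator share a root iff $a=1/2$, so for $a<0$ there is no cancellation and Theorem~\ref{vdgkCorollary3.4} applies directly.
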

\begin{proof}[(first proof)]
Any element of $\widehat{\Pi}_{2/2,2}$ can be represented as
\[
\psi_a(z)=\frac{\frac{1}{2} \left(2 a^2-4 a+1\right) z^2+(1-2 a) z+1}{(1-a z)^2}
\]
with $a\in\mathbb{R}$. First we perform the same preliminary reduction as we did in Section \ref{determinationRhatp2s34}. If $a=0$, then the rational function reduces to the second degree Taylor polynomial of the exponential function around $0$, which has $R=1$. So we can assume $a\ne 0$. The numerator and denominator have a common root if and only if $a=\frac{1}{2}$, in which case $R=2$. If $a\ne \frac{1}{2}$, then due to Theorem \ref{vdgkCorollary3.4}, $a>0$ is necessary for $R>0$. So we can consider only the parameter values $0<a\ne\frac{1}{2}$. If $a\ge 1$, then
\[
\psi_a^{(4)}(0)=-12 a^2 (4 a-3)<0<\frac{36 a^2 \left(2 a^2-2 a+1\right)}{(a+1)^6}=\psi_a^{(4)}(-1),
\]
meaning that there is a root in $[-1,0]$ of the continuous function  $\psi_a^{(4)}(\cdot)$: for these $a$ values, $R\le 1$ by Theorem \ref{vdgkLemma4.5} with $\ell=4$. We will find the optimal $R$ in the interval $a\in(0,1)\setminus\{\frac{1}{2}\}$, determined by the intersection of certain roots of $\psi_a$ and $\psi_a^\prime$. (Let us add that a plot of the roots of the lowest order derivatives of $\psi_a(\cdot)$ in the region $0<a<1$ would be very similar to Figure \ref{332figure}.)  If $a=1-\frac{1}{\sqrt{2}}$, then the leading coefficient of the numerator of $\psi_a$ vanishes, and in this case $R\le 1+\sqrt{2}$, because $\psi_a(-1-\sqrt{2})=0$. If $a\in(0,1)\setminus\{\frac{1}{2},1-\frac{1}{\sqrt{2}}\}$ and $a>\frac{1}{4}$, then one negative root of $\psi_a(\cdot)$ is \[\frac{-1+2 a+\sqrt{4 a-1}}{2 a^2-4 a+1}>-4,\] while if $a\in(0,1)\setminus\{\frac{1}{2},1-\frac{1}{\sqrt{2}}\}$ and $a<\frac{1}{4}$,
then the only root of $\psi_a^\prime(\cdot)$ is $0>\frac{1}{3 \alpha -1}>-4,$ implying that for $a\ne \frac{1}{4}$, $R<4$. Finally, if $a=\frac{1}{4}$, then $\psi_a(z)=\frac{\left(1+\frac{z}{4}\right)^2}{\left(1-\frac{z}{4}\right)^2}$, and one directly checks that now $R=4$. 
\end{proof}

\begin{proof}[(second proof)]
We can again consider only functions of the form 
$
\psi_a(z)=\frac{\frac{1}{2} \left(2 a^2-4 a+1\right) z^2+(1-2 a) z+1}{(1-a z)^2}
$ with $a>0$, but now, as opposed to the First proof, we conjecture $\widehat{R}_{2/2,2}=4$ in advance. We show that 
\[
\psi_{a}(-4)\ge 0, \quad \psi_{a}^{\prime}(-4)\ge 0
\]
and
\[
\forall k\in\mathbb{N}, k\ge 2:\quad \psi_{a}^{(k)}(-4)\ge 0
\]
imply $a=\frac{1}{4}$. Indeed, the first two conditions with $a>0$ amount to
\[
16 a^2-24 a+5\ge 0, \quad 12 a-3\ge 0,
\]
that is, to $a\in \{1/4\}\cup [5/4,+\infty)$. Then, instead of the third condition, it is sufficient to require the weaker inequality (\ref{limitcondition})---expressing the fact that the leading coefficient of a polynomial in $k$ has to be non-negative, if the polynomial is non-negative for all $k\ge 2$---that reads as $\frac{1}{2}-a\ge 0$, implying the unique value $a=\frac{1}{4}$.
\end{proof}

As for the  $\mathbf{\widehat{\Pi}_{2/2,3}}$ class, it has only two members: 
\[
\psi_{231}(z)=\frac{-\frac{1}{6} \left(\sqrt{3}+1\right) z^2-\frac{z}{\sqrt{3}}+1}{\left(1-\frac{1}{6} \left(3+\sqrt{3}\right) z\right)^2}\]
and
\[
\psi_{232}(z)=\frac{\frac{1}{6} \left(\sqrt{3}-1\right) z^2+\frac{z}{\sqrt{3}}+1}{\left(1-\frac{1}{6} \left(3-\sqrt{3}\right) z\right)^2}.
\]
One easily checks that $R(\psi_{231})\le \frac{\sqrt{3}+\sqrt{3 \left(3+2 \sqrt{3}\right)}}{1+\sqrt{3}}<\frac{5}{2}$ (as shown by a root of $\psi_{231}$). On the other hand, $R(\psi_{232})=1+\sqrt{3}$, since $\psi_{232}=\psi_{a^*_{23}}$, \textit{i.e.,} the optimal element of the $\Pi_{2/2,3}$ class, so  
$\widehat{R}_{2/2,3}=R_{2/2,3}=1+\sqrt{3}$.

\subsection{Determination of $\widehat{R}_{3/3,4}$}\label{sectionSDIRKs3p4}

Any element of $\widehat{\Pi}_{3/3,4}$ can be represented in the form
\[
\psi(z)=\frac{a_3 z^3+a_2 z^2+a_1 z+1}{(1-a z)^3}
\]
with suitable real parameters $a, a_1, a_2$ and $a_3$. One directly computes that there are exactly three solutions $(a, a_1, a_2, a_3)\in \mathbb{R}^4$ to the system
\[
\psi^{(k)}(0)=1,\quad\quad k=0,1,2,3,4
\]
(we remark that we would get the same three \textit{real} solutions if we allowed $(a, a_1, a_2, a_3)\in \mathbb{C}^4$). For all of these three solutions, condition $a>0$ is automatically satisfied (\textit{c.f.} Theorem \ref{vdgkCorollary3.4}), and we have 
for $m=1, 2, 3$ that $a=\text{root}_m(24,-36,12,-1)$, 
$a_1=\text{root}_{4-m}(8,12,-12,1)$, $a_2=\text{root}_{\frac{1}{2} \left(3 m^2-11 m+12\right)}(64,-48,0,1)$ and $a_3=\text{root}_{\frac{1}{2} \left(3 m^2-11 m+12\right)}(1536,-1152,-24,1)$.
Let us denote the corresponding rational functions by $\psi_{34m}$ ($m=1,2,3$).  We can check that $\frac{1}{a}$ is a pole of order 3 in each case, so no cancellation between the numerator and denominator occurs. We are going to prove that 
\[
\max_{m=1, 2, 3} R(\psi_{34m})=-x^*
\]
with \[x^*:=\text{root}_1(1,12,60,120,-144,-1152,-1536,1152,2304,-1536)\approx\]
\[ 
-3.287278451851993925371346
\] and the maximum occurring at $m=1$.

Let us consider the $m=2$ case first. Then $\psi_{342}^{(6)}(0)=\text{root}_1(8,-3540,600,4625)<0$,  so $R(\psi_{342})=0$ by definition.

Next if $m=3$, then $\psi_{343}^{(5)}\left(\text{root}_5(1,66,1242,7008,1872,-648,-72)\right)=0$, so Theorem \ref{vdgkLemma4.5} with $\ell=5$ says that $R(\psi_{343})\le -\text{root}_5(1,66,1242,7008,1872,-648,-72)\approx 0.0943315<|x^*|$.

Finally, if $m=1$, then the partial fraction decomposition of $\psi_{341}$ reads as
\[
\psi_{341}(x)=c+\sum_{\ell=1}^3 \frac{c_\ell}{(\alpha_0-x)^\ell}, 
\]
where $c=\text{root}_1(1,9,-9,-9)\approx -9.82294$, $c_1=\text{root}_3(1,-288,-5184,13824)\approx 304.856$, $c_2=\text{root}_1(1,3168,-243648,-235008)\approx -3243.10$, $c_3=$ $\text{root}_3(1,-11808,-684288,110592)$ $\approx$ $11865.6$, and $\alpha_0=\text{root}_3(1,-12,36,-24)\approx 7.75877$. Then $\psi_{341}(x^*)=0$. Since now
trivially $B(\psi_{341})=+\infty$, Theorem \ref{absmon_int} with $x:=x^*$ proves absolute monotonicity of 
$\psi_{341}$ on $[x^*,0]$ provided that we show the point conditions
\[
\psi_{341}^{(k)}(x^*)\ge 0
\]
for all $k\ge 1$ (the theorem is applicable due to the lack of poles in $(-\infty,0]$). Indeed, for such $k$ values we have
\[
\psi_{341}^{(k)}(x^*)=(k+2)!(\alpha_0-x^*)^{-k-3}\left[\frac{c_3}{2}+\frac{c_2}{k+2}(\alpha_0-x^*)+\frac{c_1}{(k+1)(k+2)}(\alpha_0-x^*)^2\right].
\]
Clearly, because of $\alpha_0-x^*>0$, it is enough to verify that  the $[\ldots]$ expression is non-negative, but since $\lim_{k\to+\infty}[\ldots]$ $=\frac{c_3}{2}>0$, only finitely many $k$ values should be checked.  We easily see that $[\ldots]\ge 0$ holds if, for example,
\[
\frac{\alpha_0-x^*}{k+2}\left(|c_2|+\frac{c_1}{k+1}(\alpha_0-x^*)\right)<
\frac{23/2}{k+2}\left(3250+\frac{305}{k+1}\cdot\frac{23}{2}\right)<5500<\frac{c_3}{2}.
\]
Now the second inequality above is satisfied for $k\ge 6$, and we determine directly that $[\cdots]>0$ holds for each $k=1, 2, \ldots, 5$ as well, completing the argument that the maximal radius of absolute monotonicity within the $\widehat{\Pi}_{3/3,4}$ class is $|x^*|$.

We add that partial fraction decomposition in the $m=2$ case shows that the corresponding  "dominant coefficient" $c_3$ is negative, immediately implying $R(\psi_{342})=0$ (\textit{c.f.} Remark \ref{remark1.13aboutThm4.4}).

\subsection{Determination of $\widehat{R}_{3/3,3}$}\label{sectionSDIRKs3p3}

We will prove that the maximal radius of absolute monotonicity in this $\widehat{\Pi}_{3/3,3}$ class is $2+\sqrt{8}\approx 4.82842$. Now the one-parameter family of rational functions takes the form
\[
\psi_a(z)=\frac{\frac{1}{6} \left(-6 a^3+18 a^2-9 a+1\right) z^3+\frac{1}{2} \left(6 a^2-6 a+1\right) z^2+(1-3 a) z+1}{(1-a z)^3}
\]
with $a\in\mathbb{R}$. If $a=0$, then the rational function reduces to the third degree Taylor polynomial of the exponential function around $0$, which has $R=1$. So we can assume $a\ne 0$.  Let us exclude two more exceptional parameter values as well. It is easily seen by computing the corresponding resultant that the numerator and denominator have a common root if and only if $a=\frac{1}{6} \left(3\pm\sqrt{3}\right)$. In any of these cases,  $\psi_a$ is either $\psi_{231}$ or $\psi_{232}$ from the $\widehat{\Pi}_{2/2,3}$ class (see the paragraph just before Section \ref{sectionSDIRKs3p4}), hence $R\le 1+\sqrt{3}<2+\sqrt{8}$.

Now we can apply Theorem \ref{vdgkCorollary3.4} saying that $a>0$ is necessary to have $R>0$.  So in the following we can suppose that $0<a\ne \frac{1}{6} \left(3\pm\sqrt{3}\right)$. 

One can prove by induction that for $k\ge 1$
\[
\psi_a^{(k)}(x)=\frac{a^{k-3} k!}{2}  (1-a x)^{-k-3}\ \times \]
\[
\Bigg((2 a-1) (6 a-1) a^2 x^2+\left(8 a^3 (k-2)+a^2 (9-7 k)+a (k-1)\right)x + \]
\begin{equation}\label{SDIRKs3p3explform}
 \left(a^2 (k-3) (k-2)-a (k-3) (k-1)+\frac{1}{6} (k-2)(k-1)\right)\Bigg).
\end{equation}

\begin{figure}[h]
  \centering
  \includegraphics[width=5in]{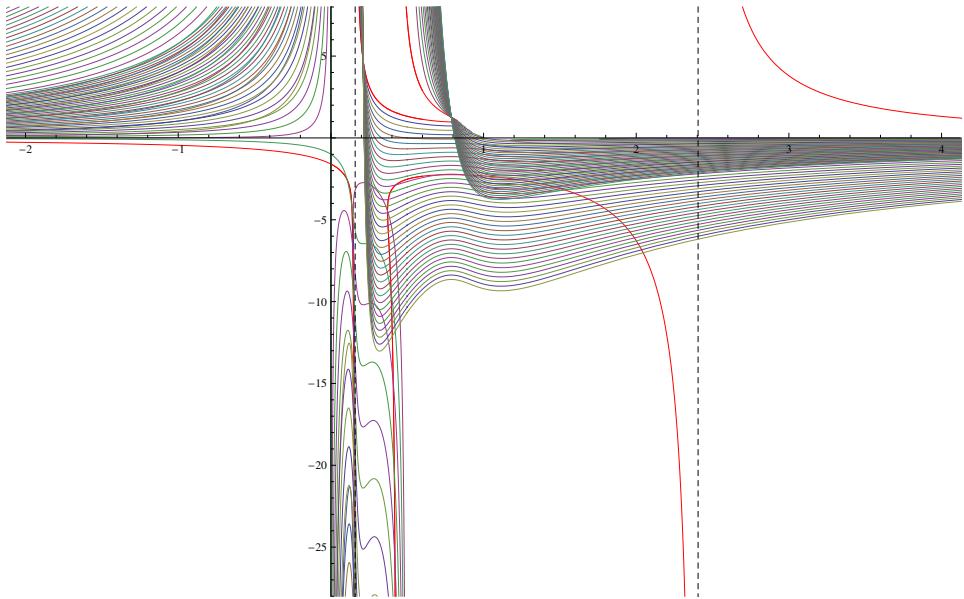}
  \caption{The $\widehat{\Pi}_{3/3,3}$ class. Each curve (corresponding to different $k$ values between $0$, $1, \ldots, 36$) shows a root of $\psi_a^{(k)}(\cdot)$ as the parameter $a$ is varied. The red curve ($k=0$) is obtained as a solution of a parametric cubic polynomial, whereas the other curves are derived from quadratic ones.}\label{331figure}
\end{figure}

\begin{figure}[h]
  \centering
  \includegraphics[width=3in]{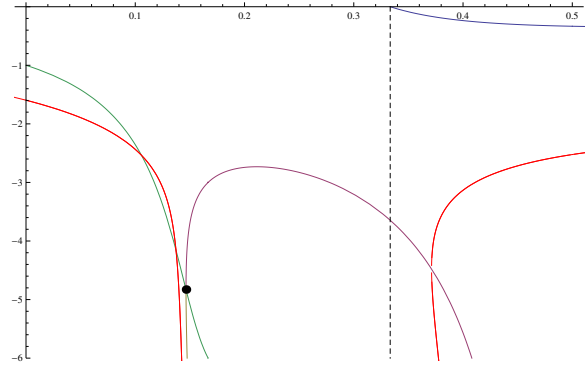}
  \caption{Zooming in on the previous figure (Figure \ref{331figure}) reveals the location of the optimum (depicted as a black dot) in the $\widehat{\Pi}_{3/3,3}$ class.}\label{332figure}
\end{figure}

 Let us consider first the $k=5$ special case of the above explicit formula with $a\ge \frac{1}{3}$. From this we see that one of the roots of $\psi_a^{(5)}(\cdot)$ is $-\frac{1}{3}$, if $a=\frac{1}{2}$, and
\[
\frac{-12 a^2+13 a-2-\sqrt{72 a^4-168 a^3+123 a^2-28 a+2}}{a (2 a-1) (6 a-1)}
\]
if $\frac{1}{3}\le a\ne \frac{1}{2}$. But this last expression has values in, say, $(-1,0]$ for  $\frac{1}{3}\le a\ne \frac{1}{2}$, implying, by Theorem \ref{vdgkLemma4.5} with $\ell=5$, that $R(\psi_a)\le 1$ for $\frac{1}{3}\le a$.

We focus on the remaining $a\in \left(0,\frac{1}{3}\right)\setminus\{\frac{3-\sqrt{3}}{6}\}$ parameter set now. If $0<a<\frac{2-\sqrt{2}}{4}$, then $\psi_a^{\prime\prime}(\cdot)$ has a root of the form
\[
\frac{5 a-1+\sqrt{-48 a^3+57 a^2-14 a+1}}{2 a (2 a-1) (6 a-1)}
\]
in the interval $(-2-\sqrt{8},-1)$, meaning that $R(\psi_a)<2+\sqrt{8}$ here. On the other hand, if 
$\frac{2-\sqrt{2}}{4}<a<\frac{1}{3}$ (the exceptional $\frac{3-\sqrt{3}}{6}$ value can safely be added again), then $\psi_a^{\prime}(\cdot)$ has a root of the form
\[
\frac{4 a-1+\sqrt{-8 a^2+8 a-1}}{(2 a-1) (6 a-1)}
\]
in the interval $(-2-\sqrt{8},-1-\sqrt{7})$, so $R(\psi_a)<2+\sqrt{8}$, too, for these $a$ values. (The $a=\frac{1}{6}$ case is a removable singularity of the above root expression with value $-3$.)

Finally, if $a=\frac{2-\sqrt{2}}{4}$, then $\psi_a\left(-2-\sqrt{8}\right)=1-\frac{2 \sqrt{2}}{3}>0$, and, by using
(\ref{SDIRKs3p3explform}), for any $k\ge 1$ we have
\[
\psi_a^{(k)}\left(-2-\sqrt{8}\right)=\frac{4}{3}  \left(\frac{3}{2}- \sqrt{2}\right)^k  k!(k-1) (k-2)\ge 0,
\]
showing that $\psi_a$ is absolutely monotonic at $x=-2-\sqrt{8}$. But then Theorem \ref{absmon_int}  (applicable because there are no poles in $(-\infty,0]$) guarantees absolute monotonicity on the whole interval $[-2-\sqrt{8},0]$, since now $B(\psi_a)=+\infty$ trivially, completing the proof of $\widehat{R}_{3/3,3}=2+\sqrt{8}$.

\subsection{Determination of $\widehat{R}_{3/3,2}$}\label{sectionSDIRKs3p2}

Formula (\ref{generalSDIRKpsi}) and Lemma \ref{a_pos} imply that now we can restrict our attention to rational functions of the form
\[
\psi_{a,c}(z)=\frac{c z^3+\left(\frac{1}{2}-3a+3a^2\right)z^2+\left(1-3a\right)z+1}{(1-a z)^{3} }
\]
with suitable parameters $a>0$ and $ c\in\mathbb{R}$. For simplicity, we have used and will use the letter $c$ instead of $a_3$. We are going to show that 
\[
\psi_{a,c}(-6)\ge 0, \quad \psi_{a,c}^{\prime}(-6)\ge 0,\quad \psi_{a,c}^{\prime\prime}(-6)\ge 0
\]
and
\[
\forall k\in\mathbb{N}, k\ge 3:\quad \psi_{a,c}^{(k)}(-6)\ge 0
\]
imply that $a=\frac{1}{6}$ and $c=\frac{1}{216}$. 

\begin{figure}[h]
 \centering
 \includegraphics[width=7in]{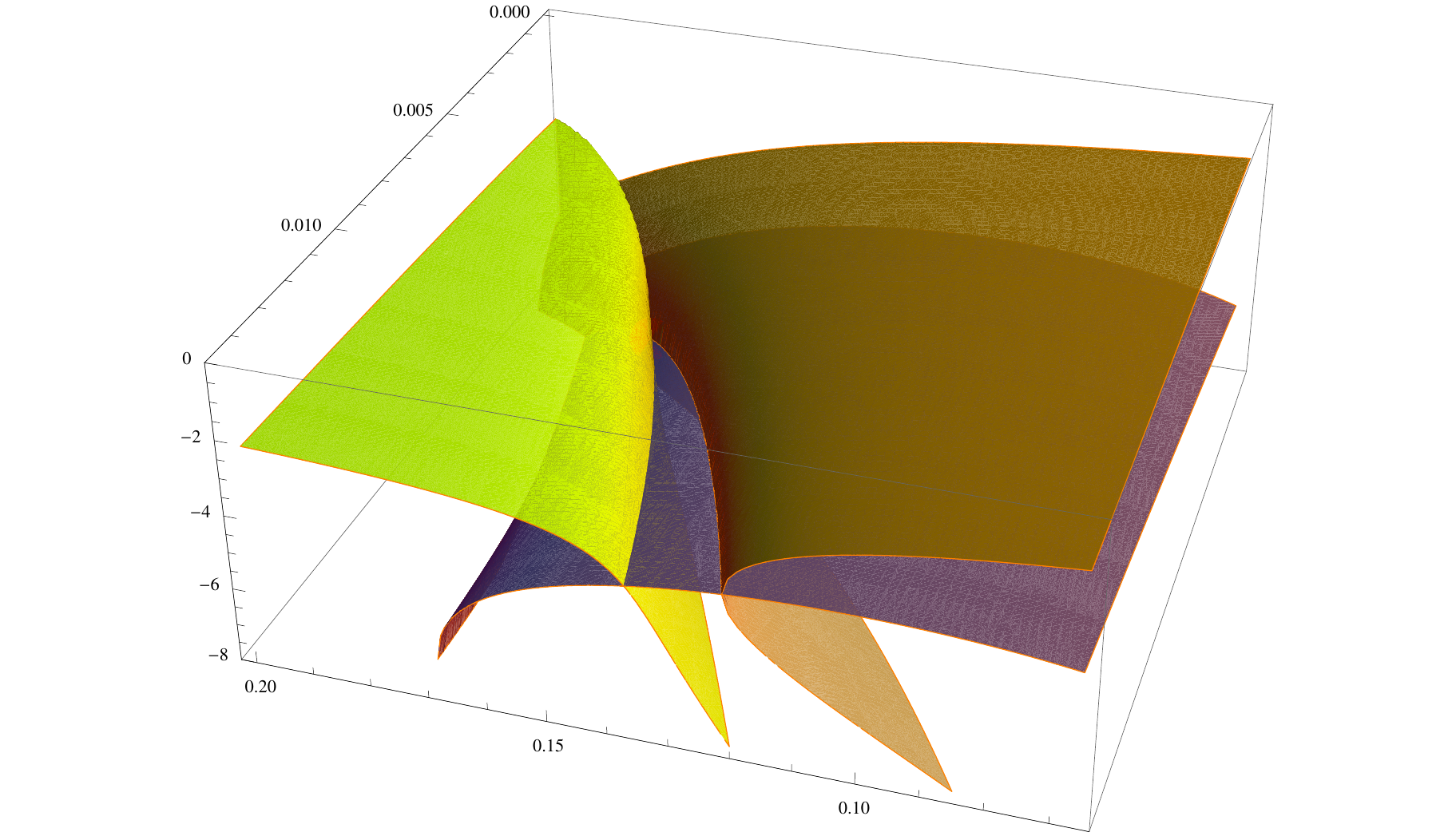}
 \caption{The $\widehat{\Pi}_{3/3,2}$ class. Depicting the roots of $\psi_{a,c}^{(k)}(\cdot)$ for $k=0, 1, 2$ as a function of $(a,c)$, the optimal point is obtained as the intersection of these three surfaces at $(a,c,x)=\left(\frac{1}{6},\frac{1}{216},-6\right)$.}
\end{figure}

First, by using (\ref{psigeneralderivative}) evaluated at $x=-6$, we see that these inequalities are equivalent to 
\begin{eqnarray}
  108 a^2-90 a-216 c+13 &\ge&  0\label{s3p21} \\ 
  108 a^3-108 a^2+42 a+108 c-5 &\ge& 0\label{s3p22} \\ 
  108 a^4-126 a^3+72 a^2-12 a+1/2 +(108 a-18) c &\ge& 0\label{s3p23} \\ 
k^2 \left(a^3-2 a^2+a/2+c\right)+k \left(-36 a^4+57 a^3-8 a^2-36 a c-a/2-3 c\right)+
 \nonumber & & \\
36 a c+2 c+216 a^5-180 a^4+26 a^3+216 a^2 c & \ge & 0, \label{s3p2k}
\end{eqnarray}
where, of course, (\ref{s3p2k}) should hold for all $k\ge 3$ integers. We will also use the necessary condition (\ref{limitcondition}), which now reads as
\begin{equation}\label{s3p2lc}
a^3-2 a^2+a/2+c \ge 0
\end{equation}
(being just the leading $k$-coefficient of (\ref{s3p2k})).

Let us express the linear parameter, $c$ from (\ref{s3p21}), (\ref{s3p22}) and (\ref{s3p2lc}), then combine the resulting inequalities to eliminate $c$ and get
\begin{equation*}\label{s3p24}
\frac{-108 a^3+108 a^2-42 a+5}{108}\le \frac{108 a^2-90 a+13}{216}\ge \frac{-2 a^3+4 a^2-a}{2}.
\end{equation*}
Rearranging both inequalities to $0$, the two cubic polynomials can be nicely factorized (containing factors $2a-1$, $6a-1$, $6a+1$ and $36 a^2-60 a+13$), so this---together with $a>0$---implies that
\begin{equation}\label{s3p25}
0<a\le \frac{1}{6}\quad\mathrm{or}\quad a\ge \frac{5+2 \sqrt{3}}{6}.
\end{equation}
Now let us consider (\ref{s3p23}) and separate two cases according to the coefficient of $c$. 

If it vanishes, that is, if $a=\frac{1}{6}$, then (\ref{s3p21}) and (\ref{s3p22}) yield $c=\frac{1}{216}$, and we are done, because we verify from the definition that $\psi_{a,c}(z)=\frac{\left(1+\frac{z}{6}\right)^3}{\left(1-\frac{z}{6}\right)^3}$ is absolutely monotonic on $[-6,0]$.

Hence the proof is finished as soon as we have shown that (\ref{s3p25}) and $a\ne\frac{1}{6}$ lead to a contradiction.

First suppose that $0<a< \frac{1}{6}$. Then an elementary calculation shows that (\ref{s3p23}) (whose 
left-hand side is just $(6 a-1) \left(36 a^3-36 a^2+18 a-1+36 c\right)/2$) and  (\ref{s3p22}) contradict to each other.

As a result, it is sufficient to exclude $a\ge \frac{5+2 \sqrt{3}}{6}$. Let us express $c$ (again) from (\ref{s3p2lc}) and  (\ref{s3p21}) to have
\begin{equation}\label{s3p26}
\frac{-2 a^3+4 a^2-a}{2}\le c\le \frac{108
   a^2-90 a+13}{216},
\end{equation}
and let us abbreviate the left-hand side of (\ref{s3p2k}) by $\lambda(k,a,c)$.

\begin{lem}\label{SDIRKs3p2Lemma6.1}
For each $a\ge \frac{5+2 \sqrt{3}}{6}$ and each $c$ satisfying (\ref{s3p26}), there is an integer $k\ge 3$  such that $\lambda(k,a,c)<0$.
\end{lem}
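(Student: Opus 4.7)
Plan. The quantity $\lambda(k,a,c)$ is a quadratic in $k$ whose leading coefficient equals $A(a,c) := a^3 - 2a^2 + a/2 + c$, namely the left-hand side of (\ref{s3p2lc}), and is therefore non-negative on the parameter region. I would split the argument into two cases according to whether $A(a,c) = 0$ or $A(a,c) > 0$.

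When $A(a,c) = 0$, i.e.\ $c = c_{\min}(a) := -a^3 + 2a^2 - a/2$ (the lower endpoint of (\ref{s3p26})), direct substitution collapses $\lambda(k,a,c_{\min}(a))$ to the affine function $a\bigl[(1+4a-12a^2)\,k + (216a^3-12a^2-14a-1)\bigr]$. Because $a \ge (5+2\sqrt{3})/6 > 1/2$ forces $12a^2-4a-1 > 0$, the slope in $k$ is strictly negative, so $\lambda < 0$ for every integer $k$ exceeding the threshold $\tilde{k}(a) := (216a^3-12a^2-14a-1)/(12a^2-4a-1)$. The inequality $\tilde k(a) > 3$ boils down to $108a^3 - 24a^2 - a + 1 > 0$, which clearly holds for $a \ge 1$, so the required integer $k \ge 3$ exists.

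When $A(a,c) > 0$, the parabola $\lambda(\cdot,a,c)$ is strictly convex in $k$. My plan is to establish (i) that the discriminant $\Delta(a,c) := B(a,c)^2 - 4\,A(a,c)\,C(a,c)$ (with $B$ and $C$ denoting the coefficient of $k$ and the constant term of $\lambda$) is strictly positive, (ii) that the vertex $k^*(a,c) := -B(a,c)/(2A(a,c))$ exceeds $2$, and (iii) that $\Delta(a,c) > 4\,A(a,c)^2$. These together imply that the integer $k := \lceil k^*(a,c) \rceil \ge 3$ satisfies $\lambda(k,a,c) < \lambda(k^*) + A = -\Delta/(4A) + A < 0$. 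Since $A$, $B$, $C$ are each affine in $c$, (i) and (iii) reduce to single polynomial inequalities in $(a,c)$ of degree at most $2$ in $c$, and (ii) is itself affine in $c$---all within the reach of \Reduce\ or Sturm-type bookkeeping after endpoint evaluation.

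The main obstacle will be the uniform verification of (iii): as $c \to c_{\min}(a)^+$ the leading coefficient $A$ tends to $0$, so $A^2 \to 0$ much faster than $\Delta$, making the inequality comfortable near the degenerate boundary but delicate as $c$ climbs toward $c_{\max}(a)$. Reassuringly, the identity $c_{\max}(a) - c_{\min}(a) = (6a+1)(36a^2-60a+13)/216$ shows that the parameter region $c \in [c_{\min}(a), c_{\max}(a)]$ collapses to a single point precisely at $a = (5+2\sqrt{3})/6$, and at that point $A(a, c_{\max}(a)) = 0$ as well; thus this corner is naturally absorbed into Case 1 rather than Case 2, and the non-degenerate case operates on a region bounded strictly away from the problematic limit, making the required uniform polynomial bounds feasible.
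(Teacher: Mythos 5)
Your Case 1 is correct: at $c=c_{\min}(a)$ the quadratic in $k$ degenerates to the affine function you display (which is the paper's factored form $a(6a+1)\bigl((1-2a)k+36a^2-8a-1\bigr)$), and the strictly negative slope yields a suitable integer $k\ge 3$ at once. The substance of the lemma, however, is Case 2, and there you have presented a plan rather than a proof. You yourself flag the key inequality (iii), namely $\Delta(a,c)>4A(a,c)^2$ uniformly over $a\ge\frac{5+2\sqrt{3}}{6}$ and $c_{\min}(a)<c\le c_{\max}(a)$, as ``the main obstacle'' and then leave it unverified. Without that inequality nothing has been established.

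Nor is (iii) obviously a matter of ``endpoint evaluation'': $\Delta-4A^2$ is a \emph{convex} quadratic in $c$ (its $c^2$-coefficient is $432a^2+72a-3>0$ for $a>1/2$), so its minimum over the $c$-interval can be attained at the interior vertex rather than at an endpoint; the interval endpoints $c_{\min}(a)$, $c_{\max}(a)$ are themselves polynomials in $a$; and locating the vertex relative to this moving interval is a genuine two-variable elimination problem. The paper's proof sidesteps all of this: it observes that $\partial_c\lambda$ is a quadratic in $k$ that is independent of $c$ and negative precisely for $k$ in an explicit $a$-dependent interval; on that interval $\lambda$ is decreasing in $c$ and hence bounded above by $\lambda(k,a,c_{\min}(a))$, which is exactly your affine Case 1 expression; and it then remains only to show that the $k$-window where this affine bound is negative and $\partial_c\lambda<0$ has length greater than $9$ --- a single-variable inequality in $a$. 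That monotonicity-in-$c$ reduction, which collapses the two-parameter analysis onto the line $c=c_{\min}$, is the structural idea your plan does not exploit and is what keeps the computation one-dimensional.
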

\begin{proof}
Let us pick and fix throughout the proof an arbitrary $a\ge \frac{5+2 \sqrt{3}}{6}$ and $c$ satisfying (\ref{s3p26}). Since
$
\partial_c \lambda(k,a,c)=k^2-(36 a+3) k+216 a^2+36 a+2,
$
we see that $\partial_c \lambda(k,a,c)<0$ holds if and only if
\begin{equation}\label{s3p2lemma1}
\varrho^{-}_{321}(a)<k<\varrho^{+}_{321}(a),
\end{equation}
where $\varrho^{\pm}_{321}(a):=\frac{3}{2} (12 a+1)\pm\frac{1}{2} \sqrt{432 a^2+72 a+1}$.
It is also easily seen that---for the allowed $a$ values---(\ref{s3p2lemma1}) automatically implies $k\ge 3$. So, if $k$ satisfies (\ref{s3p2lemma1}), then due to (\ref{s3p26}) we have that
\[
\lambda(k,a,c)\le \lambda\left(k,a,\frac{-2 a^3+4 a^2-a}{2}\right)=a (6 a+1) \left(
(1-2 a) k+36 a^2-8 a-1\right).
\]
Since $a>\frac{1}{2}$ now, this very last factor is negative if and only if 
\begin{equation}\label{s3p2lemma2}
k>\frac{36 a^2-8 a-1}{2 a-1}.
\end{equation}
These mean that (\ref{s3p2lemma1}) and (\ref{s3p2lemma2}) imply $\lambda(k,a,c)<0$. But 
$
\varrho^{-}_{321}(a)<\frac{36 a^2-8 a-1}{2 a-1}
$
holds precisely if $a\in(0,1/6)\cup (1/2,+\infty)$, which is now true by the assumption on $a$, so to finish the proof, it is enough to show that the set 
\[
\left( \frac{36 a^2-8 a-1}{2 a-1} , \varrho^{+}_{321}(a)\right) \cap \mathbb{N}
\]
is not empty: an elementary computation yields that for $a\ge \frac{5+2 \sqrt{3}}{6}$ we have 
\[
\varrho^{+}_{321}(a)-\frac{36 a^2-8 a-1}{2 a-1} >9. 
\]
\end{proof}

\begin{rem}\label{length9} 
We add that this last integer in the proof, $9$, could not be replaced by, say, $10$ on the whole interval $a\in \left( \frac{5+2 \sqrt{3}}{6} , +\infty\right)$.
\end{rem}

\begin{rem}  It is possible to finish the proof Lemma \ref{SDIRKs3p2Lemma6.1} by using the $\partial_c \lambda(k,a,c)>0$ case, but this would result in more difficult computations and would provide (asymptotically) an interval of length 6 (instead of 9) in the last step.
\end{rem}

\section{Determination of $\widehat{R}_{4/4,2}$}\label{sectionSDIRKs4p2}

We will apply the same principles as in Section \ref{sectionSDIRKs3p2} (or as in the second proof of Lemma \ref{lemma61.rhat2/2,2=4}), but, this time, we could eliminate the "linear" parameters $d$ and $c$---recursively via monotonicity---only at the cost of a little bit more computation.\\

By appealing to formula (\ref{generalSDIRKpsi}) and Lemma \ref{a_pos} again, it is sufficient to consider rational functions of the form
\[
\psi_{a,c,d }(z)=\frac{d z^4+c z^3+\left(\frac{1}{2}-4a+6a^2\right)z^2+\left(1-4a\right)z+1}{(1-a z)^{4} }
\]
with suitable parameters $a>0$ and $ c, d\in\mathbb{R}$. (Again, for easier readability we have used and will use $c$ instead of $a_3$, and $d$ instead of $a_4$.) We show that 
\[
\psi_{a,c, d}(-8)\ge 0, \quad \psi_{a,c, d}^{\prime}(-8)\ge 0,\quad \psi_{a,c, d}^{\prime\prime}(-8)\ge 0,\quad \psi_{a,c, d}^{\prime\prime\prime}(-8)\ge 0
\]
and
\[
\forall k\in\mathbb{N}, k\ge 4:\quad \psi_{a,c, d}^{(k)}(-8)\ge 0
\]
imply  $a=\frac{1}{8}$, $c=\frac{1}{128}$ and $d=\frac{1}{4096}$.

Let us use (\ref{psigeneralderivative}) at $x=-8$ (dropping the always positive factors again) and write out the above inequalities in detail to get 
\begin{eqnarray}
  1-8 (1-4 a)+64 \left(6 a^2-4 a+\frac{1}{2}\right)-512 c+4096 d &\ge&  0\label{s4p21} 
\end{eqnarray}
\begin{eqnarray}
(8 a+1) \left(1-4a-16 \left(6 a^2-4 a+\frac{1}{2}\right)+192 c-2048 d\right)+\nonumber & & \\
4 a \left(1-8 (1-4 a)+64 \left(6 a^2-4 a+\frac{1}{2}\right)-512 c+4096 d\right) &\ge& 0\label{s4p22} 
\end{eqnarray}
\begin{eqnarray}
10 a^2 \left(1-8 (1-4 a) +64 \left(6 a^2-4 a+\frac{1}{2}\right)-512 c+4096 d\right)+ \nonumber & & \\  
4 a (8 a+1) \left(1-4a-16 \left(6 a^2-4 a+\frac{1}{2}\right)+192 c-2048 d\right)+ \nonumber & & \\
\frac{1}{2} (8 a+1)^2 \left(2 \left(6 a^2-4 a+\frac{1}{2}\right)-48 c+768 d\right)  &\ge& 0\label{s4p23} 
\end{eqnarray}
\begin{eqnarray}
10 a^2 (8 a+1) \left(1-4a-16 \left(6 a^2-4 a+\frac{1}{2}\right)+192 c-2048 d\right)+ \nonumber & & \\ 
2 a (8 a+1)^2 \left(2 \left(6 a^2-4 a+\frac{1}{2}\right)-48 c+768 d\right)+\frac{1}{6} (8 a+1)^3 (6 c-192 d)\, + \nonumber & & \\
 20 a^3 \left(1-8 (1-4 a)+64 \left(6 a^2-4 a+\frac{1}{2}\right)-512 c+4096 d\right)  &\ge& 0\label{s4p24} 
\end{eqnarray}
\begin{eqnarray}
k^3 \left(6 a^4-6 a^3+a^2+2 a c+2 d\right)+\nonumber & & \\
k^2 \left( -384 a^5+324 a^4-42 a^3-144 a^2 c-6 a c-192 a d-12 d \right)+
 \nonumber & & \\
k \left( 4608 a^6-3072 a^5+618 a^4+2304 a^3 c-36 a^3+144 a^2 c+4608 a^2 d\ -\right.\nonumber & & \\
\left. a^2+4 a c+576 a d+22 d \right) +  \nonumber & & \\
4608 a^6-2688 a^5-6144 a^4 c+300 a^4-24576 a^3 d-4608 a^2 d-384 a d-12 d & \ge & 0, \label{s4p2k}
\end{eqnarray}
with  (\ref{s4p2k}) valid for any integer $k\ge 4$. The necessary condition (\ref{limitcondition}) now reads as
\begin{equation}\label{s4p2lc}
6 a^4-6 a^3+a^2+2 a c+2 d\ge 0.
\end{equation}

The domain of the "non-linear" parameter $a>0$ will be divided at the following "natural" points
(determined by the requirement that denominators in the proof below have constant sign on the corresponding intervals)
\[
0<\frac{3-\sqrt{5}}{16}<\frac{1}{12}<\frac{1}{8}<\frac{3}{16}<\frac{3+\sqrt{5}}{16}<\frac{9+2 \sqrt{6}}{24}<\ldots\]
and also at the "artificial" point $\ldots<\frac{22}{10}$ (introduced for technical reasons).  The explicit formulae for the $P_{42n}$ and $\varrho_{42n}$ expressions ($n=1, 2, \ldots$) appearing soon are listed in the Appendix (or directly in the proofs).

\textbf{The interval $a\in\left(0,\frac{3-\sqrt{5}}{16}\right)$.} From (\ref{s4p24}), $d$ is expressed as
$d\le \frac{P_{421}(a,c)}{2048 a^2-768 a+32}$, while from (\ref{s4p23}) as $d\ge \frac{P_{422}(a,c)}{ 4096 a-768}$. By joining these inequalities, $d$ is eliminated and we get
\begin{equation}\label{s4p210}
c\le \frac{-6144 a^5+5120 a^4-2144 a^3+400 a^2-32 a+1}{1536 a^2-256 a+24}.
\end{equation}
From (\ref{s4p22}) we obtain $d\le \frac{P_{423}(a,c)}{2048}$, which, together with $d\ge \frac{P_{422}(a,c)}{ 4096 a-768}$ allows us to eliminate $d$ again and get
\begin{equation}\label{s4p211}
c\ge \frac{1}{192} \left(-768 a^3+512 a^2-144 a+13\right).
\end{equation}
From (\ref{s4p210}) and (\ref{s4p211}) we get $(8 a-1) (8 a+1) (16 a-3)\le 0$, which is impossible for $a\in\left(0,\frac{3-\sqrt{5}}{16}\right)$.

\textbf{The case when $a=\frac{3-\sqrt{5}}{16}$.} From (\ref{s4p24}), $c$ can be expressed as $c\le \frac{1}{128} \left(2 \sqrt{5}-3\right)$. But  (\ref{s4p211}) now says $\frac{1}{384} \left(1+6 \sqrt{5}\right)\le c$, a contradiction.

\textbf{The interval $a\in\left(\frac{3-\sqrt{5}}{16},  \frac{3}{16}\right)$ (containing the unique solution).} We have now 3 lower estimates: $d\ge \frac{P_{422}(a,c)}{ 4096 a-768}$, $d\ge \frac{P_{421}(a,c)}{2048 a^2-768 a+32}$, and (from (\ref{s4p21})) $d\ge \frac{P_{424}(a,c)}{4096}$. Moreover, an upper estimate $d\le \frac{P_{423}(a,c)}{2048}$. By making 3 appropriate pairs from these, we eliminate $d$ in each case and get (\ref{s4p211}), 
\begin{equation}\label{s4p212}
c\le \frac{1}{128} \left(192 a^2-104 a+11\right),
\end{equation}
and
\begin{equation}\label{s4p213}
\begin{cases}
 c\le \frac{-6144 a^4+4608 a^3-1600 a^2+200 a-7}{1536 a-128}  & \mathrm{\quad if \quad} \frac{1}{16}
   \left(3-\sqrt{5}\right)<a<\frac{1}{12}, \\
 c\ge \frac{-6144 a^4+4608 a^3-1600 a^2+200 a-7}{1536 a-128} & \mathrm{\quad if \quad}  \frac{1}{12}<a<\frac{3}{16}.
\end{cases}
\end{equation}
(It is easily seen that $a=1/12$ immediately leads to a contradiction.)

On $\frac{1}{16}\left(3-\sqrt{5}\right)<a<\frac{1}{12}$, (\ref{s4p211}) and (\ref{s4p213}) imply $\frac{5}{3} (8 a-1) (8 a+1)\ge 0$, which is impossible.

If $\frac{1}{12}<a<\frac{3}{16}$, then  (\ref{s4p211}) and (\ref{s4p213}) imply
\begin{equation}\label{s4p214}
\begin{cases}
 c\ge \frac{-6144 a^4+4608 a^3-1600 a^2+200 a-7}{1536 a-128} & \mathrm{\quad if \quad} \frac{1}{12}
  <a\le\frac{1}{8}, \\
 c\ge \frac{1}{192} \left(-768 a^3+512 a^2-144 a+13\right) & \mathrm{\quad if \quad}  \frac{1}{8}<a<\frac{3}{16}.
\end{cases}
\end{equation}
On $\frac{1}{8}<a<\frac{3}{16}$, (\ref{s4p212}) and (\ref{s4p214}) cannot be true simultaneously. On the other hand, if $\frac{1}{12}<a\le\frac{1}{8}$, then (\ref{s4p212}) and (\ref{s4p214}) yield $a=1/8$ and $c=1/128$, and with $d\ge \frac{P_{424}(a,c)}{4096}$ and $d\le \frac{P_{423}(a,c)}{2048}$ we get $d=1/4096$.

\textbf{The interval $a\in\left[\frac{3}{16},  \frac{9+2 \sqrt{6}}{24}\right)$.} We will use here the  two lower estimates $d\ge \frac{P_{424}(a,c)}{4096}$, $d\ge \frac{1}{2} \left(-6 a^4+6 a^3-a^2-2 a c\right)$ (this second one derived from (\ref{s4p2lc})) and the upper estimate $d\le \frac{P_{423}(a,c)}{2048}$. Again, by forming two appropriate pairs, $d$ is eliminated and we obtain two inequalities with $a$ and $c$. After a rearrangement, we have  (\ref{s4p212})  and $c\ge \frac{1}{192}\left(-768 a^3+768 a^2-160 a+7\right)$, leading to a contradiction.\\

So far we have examined the region $0 < a < \frac{9+2 \sqrt{6}}{24}$ and proved that the system has a solution if and only if $a=1/8$. 

In the rest of this section, we will show that there are no solutions if $a\ge \frac{9+2 \sqrt{6}}{24}$. It can be shown (in a few lines, by using only  (\ref{s4p21}),  (\ref{s4p22}) and  (\ref{s4p2lc}), but skipping the details here) that if $a\ge \frac{9+2 \sqrt{6}}{24}$, then  we have
\begin{equation}\label{s4p215}
\begin{cases}
 \frac{1}{192} \left(-768 a^3+768 a^2-160 a+7\right)\le c \le
   \frac{1}{512} \left(-1536 a^3+1728 a^2-424 a+25\right),  &   \\
 \frac{1}{2} \left(-6 a^4+6 a^3-a^2-2 a c\right)\le d\le 
   \frac{1}{2048}\left(768 a^3-512 a^2-512 a c+104 a+192 c-7\right) & 
\end{cases}
\end{equation}
or
\begin{equation}\label{s4p216}
\begin{cases}
 \frac{1}{512} \left(-1536 a^3+1728 a^2-424 a+25\right)\le c\le
   \frac{1}{128} \left(192 a^2-104 a+11\right), &   \\
 \frac{1}{4096}\left(-384 a^2+224 a+512 c-25\right)\le d\le 
   \frac{1}{2048}\left(768 a^3-512 a^2-512 a c+104 a+192 c-7\right). & 
\end{cases}
\end{equation}

 Let us introduce the abbreviation $\lambda(k,a,c,d)$ to denote the left-hand side of (\ref{s4p2k}).

\begin{lem}
For each $\frac{1}{24} \left(9+2 \sqrt{6}\right)\le a\le \frac{22}{10}$, and any $c$ and $d$ satisfying (\ref{s4p215}), we have that $\lambda(77,a,c,d)<0$.
\end{lem}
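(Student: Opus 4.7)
The plan is to view $\lambda(77, a, c, d)$ as an affine function of the pair $(c, d)$ with $a$ held fixed, and to apply the elementary fact that such a function attains its supremum over a convex polygon at a vertex. Specializing (\ref{s4p2k}) at $k = 77$ gives the decomposition
\[
\lambda(77, a, c, d) \;=\; A(a)\, c \;+\; B(a)\, d \;+\; C(a),
\]
where $A, B, C \in \mathbb{Q}[a]$ are explicit polynomials of degrees $4$, $3$, $6$ respectively, obtained by direct substitution.

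Let $c_1(a), c_2(a)$ be the lower and upper bounds on $c$ in (\ref{s4p215}) and $d_1(a, c), d_2(a, c)$ the corresponding (affine in $c$) bounds on $d$. For each $a$ in the stated interval the feasible region $F(a)$ is then a (convex) quadrilateral in $(c, d)$-space with vertices
\[
V_{ij}(a) \;:=\; \bigl(c_i(a),\ d_j(a, c_i(a))\bigr), \qquad i, j \in \{1, 2\}.
\]
By linearity of $\lambda(77, a, \cdot, \cdot)$, its maximum over $F(a)$ is attained at one of the $V_{ij}(a)$. It therefore suffices to prove that the four polynomials
\[
\Lambda_{ij}(a) \;:=\; \lambda\bigl(77, a, c_i(a), d_j(a, c_i(a))\bigr) \;\in\; \mathbb{Q}[a]
\]
are strictly negative on $\bigl[\tfrac{9 + 2\sqrt{6}}{24},\, \tfrac{22}{10}\bigr]$, reducing a joint constraint in three variables to four univariate polynomial sign problems.

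Each such sign verification is mechanical: isolate the real roots of $\Lambda_{ij}$ via a Sturm sequence (or a single \Reduce\ call), confirm that none of them lies in the target interval, and test the sign at a convenient rational point such as $a = 1/2$. The irrational left endpoint $\tfrac{9 + 2\sqrt{6}}{24}$ is handled by enclosing it in a sufficiently narrow rational interval (obtained e.g.\ via \texttt{IsolatingInterval}) that is disjoint from the real roots of $\Lambda_{ij}$.

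The main obstacle is neither conceptual nor combinatorial: once linearity in $(c, d)$ is exploited, the argument is routine. The real difficulty is the size of the polynomials $\Lambda_{ij}$ (with coefficients of moderate but not negligible height), which makes the computation tedious by hand; it is precisely this bulk of arithmetic that motivates the deferral of explicit coefficient lists to the Appendix. The specific choice $k = 77$ presumably plays no deep role beyond the observation made in Remark \ref{length9}: on the subinterval in question one needs an integer $k$ at which $\lambda(k, a, c, d)$ is negative throughout (\ref{s4p215}), and $k = 77$ is a convenient value for which the four vertex polynomials $\Lambda_{ij}$ can be certified negative across the entire $a$-interval.
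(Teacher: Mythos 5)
Your approach is correct and represents a genuinely different route from the paper's. You observe that $\lambda(77,a,c,d)$ is affine in the pair $(c,d)$ with $a$ held fixed (and your degree count of $A,B,C$ as $4,3,6$ is right), that the constraints (\ref{s4p215}) cut out a convex quadrilateral in $(c,d)$-space (the $c$-bounds being constants in $c$ and the $d$-bounds affine in $c$), and that an affine function attains its maximum on a convex polygon at a vertex. This legitimately reduces the lemma to four univariate sign certifications $\Lambda_{ij}(a)<0$ on the interval. The paper instead proceeds by a sign-guided greedy substitution: it computes $\partial_d\lambda(77,a,c,d)=-48P_{425}(a)$, which depends on $a$ alone, locates its unique root $\varrho_{421}$ in the interval, and on each resulting $a$-subinterval substitutes the bound for $d$ dictated by the sign of this partial derivative; it then repeats the same trick in the $c$-variable on the resulting expression. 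This amounts to verifying only three of your four vertex polynomials, each on a sub-interval, and it has the practical benefit that the resulting expressions factor nicely (pulling out powers of $8a+1$ and the constants $\frac{57}{128}$, $12a$, etc.), leaving final checks of degree at most $4$ rather than the degree-$7$ polynomials your $\Lambda_{ij}$ would be a priori. Your approach trades an extra vertex check for avoiding the root-locating and interval-splitting bookkeeping; both are sound.

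Two small inaccuracies: the Appendix lists the paper's own intermediate polynomials $P_{42n}$ and $\varrho_{42n}$ (used in its sign-splitting argument), not the vertex polynomials $\Lambda_{ij}$ your argument would need; and the observation you attribute to Remark \ref{length9} (which concerns the $s=3$, $p=2$ case) is actually made in the unlabeled remark immediately following this lemma, stating that precisely the integers $77\le k\le 98$ work for the statement at hand. Also note that you sketch the strategy but leave the four univariate verifications unexecuted; for a complete proof those would need to be carried out (e.g.\ via Sturm sequences or \Reduce), including rigorous handling of the algebraic left endpoint $\frac{9+2\sqrt6}{24}$.
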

\begin{proof}
Since the polynomial $\partial_d \lambda(77,a,c,d)=-48 P_{425}(a)$ has a unique root $\varrho_{421}\approx 1.17854$ in the given $a$-interval, we separate two cases.

\textbf{The interval $a\in\left[\frac{1}{24} \left(9+2 \sqrt{6}\right),  \varrho_{421}\right)$.} Here $\partial_d \lambda(77,a,c,d)>0$, so by (\ref{s4p215})
\[
\lambda(77,a,c,d)\le \lambda\left(77,a,c, \frac{1}{2048}\left(768 a^3-512 a^2-512 a c+104 a+192 c-7\right)\right)=\]\[\frac{57}{128} (8 a+1) P_{426}(a,c),
\]
from which we see that it is enough to show that $P_{426}(a,c)<0$. But for the current $a$ and $c$ values we have $\partial_c P_{426}(a,c)=192 \left(128 a^2-800 a+925\right)\ge 0$, so by (\ref{s4p215})
\[
 P_{426}(a,c)\le P_{426}\left(a, \frac{1}{512} \left(-1536 a^3+1728 a^2-424 a+25\right)\right)=
\]
\[
\frac{1}{8} (8 a+1) \left(24576 a^4-139264 a^3+212288 a^2-126000 a+17575\right)<0.
\]

\textbf{The interval $a\in\left[\varrho_{421},  \frac{22}{10}\right]$.} Here $\partial_d \lambda(77,a,c,d)\le 0$, so by (\ref{s4p215})
\[
\lambda(77,a,c,d)\le \lambda\left(77,a,c, \frac{1}{2} \left(-6 a^4+6 a^3-a^2-2 a c\right)\right)=12a (8 a+1) P_{427}(a,c),
\]
so, again, it suffices to show $P_{427}(a,c)<0$. But now we observe that $\partial_c P_{427}(a,c)=192 a^2-1824 a+2850$ is positive, if $\varrho_{421}\le a<\frac{1}{8} \left(38-\sqrt{494}\right)$, and non-positive, if $\frac{1}{8} \left(38-\sqrt{494}\right)\le a \le \frac{22}{10}$. By (\ref{s4p215}) again, in the first case, we have
\[
P_{427}(a,c)\le P_{427}\left(a,  \frac{1}{512} \left(-1536 a^3+1728 a^2-424 a+25\right)\right)=\]\[\frac{1}{256} (8 a+1) \left(6144 a^4-62976 a^3+174496 a^2-172672 a+35625\right)<0,
\]
and in the second case
\[
P_{427}(a,c)\le P_{427}\left(a,\frac{1}{192} \left(-768 a^3+768 a^2-160 a+7\right)\right)=\]\[\frac{19}{32} (8 a+1) \left(256 a^2-648 a+175\right)<0. 
\]
\end{proof}

\begin{rem} \Mma tells us that among the integers $4\le k \le 1000$, exactly members of the interval $77 \le k \le 98$ share the property that $\lambda(k,a,c,d)<0$ for any allowed $a$, $c$ and $d$ in the previous lemma.
\end{rem}

\begin{lem}
For each $\frac{1}{24} \left(9+2 \sqrt{6}\right)\le a\le \frac{22}{10}$, and any $c$ and $d$ satisfying (\ref{s4p216}), we have that $\lambda(54,a,c,d)<0$.
\end{lem}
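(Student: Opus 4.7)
The plan is to follow the template of the preceding lemma, with $k=77$ replaced by $k=54$ and the bounds \eqref{s4p215} replaced by \eqref{s4p216}. Since $\lambda(k,a,c,d)$ is affine in $d$, the strategy is to eliminate $d$ first by substituting the sign-appropriate extremal bound from \eqref{s4p216}, and then to do the same for $c$ in the resulting polynomial in $a$ and $c$.

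First I would compute $\partial_d \lambda(54,a,c,d)$, which is a polynomial in $a$ alone (call it $-48\,P_{42n}(a)$ by analogy with $P_{425}$), and locate its roots in $\left[\tfrac{9+2\sqrt{6}}{24},\tfrac{22}{10}\right]$ as algebraic numbers $\varrho_{42n}$. This splits the $a$-interval into finitely many sub-intervals on which $\partial_d \lambda(54,a,c,d)$ has a definite sign. On each sub-interval I would substitute for $d$ the bound from \eqref{s4p216} which maximizes $\lambda(54,a,c,d)$ (the upper bound where $\partial_d\lambda>0$, the lower bound where $\partial_d\lambda<0$), obtaining an expression depending on $a$ and $c$ only. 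I would then repeat the process in the $c$-variable: examine the sign of $\partial_c$ of this reduced expression, subdivide the $a$-interval further at the roots in $\left[\tfrac{9+2\sqrt{6}}{24},\tfrac{22}{10}\right]$, and on each piece substitute the worst-case $c$-bound from \eqref{s4p216}. Since only the upper bounds for $c$ and $d$ in \eqref{s4p216} depend on $c$ or on each other, the bookkeeping is slightly more delicate than in the previous lemma, but the step is mechanical. On each final piece the outcome should be a univariate polynomial in $a$ factoring (as before) as $(8a+1)$ times a polynomial $Q(a)$; since $8a+1>0$ throughout, it remains to show $Q(a)<0$, which can be verified either with \Reduce or by combining endpoint values with the sign of $Q'$ on each sub-interval.

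The main obstacle will be computational rather than conceptual: the explicit choice $k=54$ is presumably dictated by the geometry of the polytope \eqref{s4p216}, and it is not obvious \emph{a priori} that a single integer value of $k$ suffices on this whole parameter region. A wrong choice of which extreme bound to use in either elimination would yield a polynomial of the wrong sign, and—in analogy with Remark \ref{length9}—only a narrow band of integers $k$ can be expected to work uniformly on $\left[\tfrac{9+2\sqrt{6}}{24},\tfrac{22}{10}\right]$; the value $54$ is presumably selected as (close to) the smallest integer for which both eliminations succeed simultaneously under \eqref{s4p216}. The delicate part is therefore to check, for each of the (few) sub-cases produced by the sign analyses of $\partial_d\lambda$ and $\partial_c$, that the final univariate polynomials $Q(a)$ are strictly negative on the relevant sub-interval.
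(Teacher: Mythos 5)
Your plan is essentially the paper's own proof. The paper computes $\partial_d\lambda(54,a,c,d)=-24P_{428}(a)$ with $P_{428}(a)=1024a^3-10176a^2+22048a-11713$, finds its two roots $\varrho_{422}\approx 0.808208$ and $\varrho_{423}\approx 1.97947$ in the interval, splits into the cases $\partial_d\lambda>0$ and $\partial_d\lambda\le 0$, substitutes the sign-appropriate $d$-bound from \eqref{s4p216} to obtain $\tfrac{159}{256}(8a+1)P_{429}(a,c)$ or $\tfrac{3}{512}(8a+1)P_{4210}(a,c)$, and then repeats the game in $c$: it checks the sign of $\partial_c P_{429}$ and $\partial_c P_{4210}$ (the latter introduces one further subdivision point $\varrho_{424}\approx 1.00126$), substitutes the worst-case $c$-bound, and verifies that each of the four resulting univariate expressions is of the form $(8a+1)\times(\text{negative quartic})$ on its sub-interval. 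Your observation that $k=54$ is tightly constrained is also confirmed in the paper's following remark, which states $k=54$ is the \emph{only} integer in $[4,1000]$ that works uniformly under \eqref{s4p216}. One small inaccuracy: you write that only the upper bounds in \eqref{s4p216} depend on $c$, but in fact both the lower bound $d\ge\frac{1}{4096}(-384a^2+224a+512c-25)$ and the upper bound on $d$ are affine in $c$; this does not affect the two-stage elimination, since after the $d$-substitution the reduced expression is still affine in $c$. What remains to turn your sketch into a proof is precisely the paper's explicit computation of the four final univariate polynomials and the verification of their negativity, which you rightly identify as the delicate computational core; as written your proposal stops short of carrying this out.
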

\begin{proof}
The proof is completely analogous to the previous one. Now
$\partial_d \lambda(54,a,c,d)=-24 P_{428}(a)$ has two roots $\varrho_{422}\approx 0.808208$ and $\varrho_{423}\approx 1.97947$ in the given $a$-interval, so we again separate two cases.

\textbf{The interval $a\in\left[\frac{1}{24} \left(9+2 \sqrt{6}\right),  \varrho_{422}\right)\cup \left(\varrho_{423},\frac{22}{10}\right]$.} Now $\partial_d \lambda(54,a,c,d)>0$, so by (\ref{s4p216})
\[
\lambda(54,a,c,d)\le \lambda\left(54,a,c,  \frac{1}{2048}\left(768 a^3-512 a^2-512 a c+104 a+192 c-7\right)\right)=\]\[ \frac{159}{256} (8 a+1) P_{429}(a,c).
\]
We show that $P_{429}<0$. Indeed, $\partial_c P_{429}(a,c)=64 \left(192 a^2-832 a+663\right)$ is positive if $a\in\left[\frac{1}{24} \left(9+2 \sqrt{6}\right),  \varrho_{422}\right)$, so by (\ref{s4p216}) here
\[
P_{429}(a,c)\le P_{429}\left(a,  \frac{1}{128} \left(192 a^2-104 a+11\right)\right)=
\]
\[
\frac{1}{2} (8 a+1) \left(12288 a^4-58368 a^3+76160 a^2-35152 a+4199\right)<0.\]
On the other hand, $\partial_c P_{429}(a,c)< 0$, if $a\in \left(\varrho_{423},\frac{22}{10}\right]$, hence by (\ref{s4p216}) we have now
\[
P_{429}(a,c)\le P_{429}\left(a,  \frac{1}{512} \left(-1536 a^3+1728 a^2-424 a+25\right)\right)=
\]
\[
\frac{1}{8} (8 a+1) \left(12288 a^4-46080 a^3+53888 a^2-29328 a+4199\right)<0.
\]

\textbf{The interval $a\in\left[\varrho_{422},\varrho_{423}\right]$.} Here $\partial_d \lambda(54,a,c,d)\le 0$, so by (\ref{s4p216})
\[
\lambda(54,a,c,d)\le \lambda\left(54,a,c, \frac{1}{4096}\left(-384 a^2+224 a+512 c-25\right)\right)=\]\[ \frac{3}{512} (8 a+1) P_{4210}(a,c).
\]
By examining the sign of $\partial_c P_{4210}(a,c)=$ $-512 \left(256 a^3-5088 a^2+16536 a-11713\right)$, we prove  finally that $P_{4210}<0$ as well. We see that $\partial_c P_{4210}(a,c)>0$, if $\varrho_{422}\le a < \varrho_{424}\approx 1.00126$, but $\partial_c P_{4210}(a,c)\le 0$, if $\varrho_{424}\le a \le\varrho_{423}$. By taking into account  (\ref{s4p216}), in the first case we have
\[
P_{4210}(a,c)\le P_{4210}\left(a,  \frac{1}{128} \left(192 a^2-104 a+11\right)\right)=\] \[53 (8 a+1) \left(12288 a^4-58368 a^3+76160 a^2-35152 a+4199\right)<0,
\]
while in the second case
\[
P_{4210}(a,c)\le P_{4210}\left(a,  \frac{1}{512} \left(-1536 a^3+1728 a^2-424 a+25\right)\right)=\] \[ 8a(8 a+1) \left(6144 a^4-45312 a^3+102368 a^2-86284 a+17225\right)<0.
\]
\end{proof}

\begin{rem} According to \textit{Mathematica}, $k=54$ is the {\rm{only}} integer in the interval $[4,1000]\cap \mathbb{N}$ such that $\lambda(k,a,c,d)<0$ holds for {\rm{any}} admissible $a$, $c$ and $d$ triples in the previous lemma.
\end{rem}

\begin{lem}\label{lemmas4p27.5}
For each $\frac{22}{10}< a$, and any $c$ and $d$ satisfying (\ref{s4p215}), there exists an integer $k\ge 4$ such that $\lambda(k,a,c,d)<0$.
\end{lem}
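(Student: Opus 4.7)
The plan is to imitate the strategy of the previous two lemmas, but since the $a$-interval is now unbounded the witness $k$ must be allowed to depend on $a$.

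First I would eliminate the ``linear'' parameter $d$. The partial derivative
\[
\partial_d \lambda(k,a,c,d) = 2k^3 - (192a+12)k^2 + (4608a^2+576a+22)k - (24576a^3+4608a^2+384a+12)
\]
is a cubic in $k$ depending on neither $c$ nor $d$, so the equation $\partial_d\lambda = 0$ partitions the $(k,a)$-plane into sign regions. On each region I would substitute into $\lambda$ the endpoint of the $d$-interval in (\ref{s4p215}) that is unfavorable to negativity, thus obtaining an upper bound $\widetilde\lambda(k,a,c)$ which is still linear in $c$. An analogous monotonicity argument in $c$ then reduces the task to proving negativity of a single polynomial $\Lambda(k,a)$ in only two variables.

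Next I would choose $k$ as an explicit function of $a$. Since the $k^2$-coefficient of $\lambda$ scales like $-384a^5$ while the leading $k^3$-coefficient is bounded by $O(a^4)$ for large $a$ (and in fact vanishes exactly when $d$ attains its lower bound, which corresponds geometrically to a cancellation between numerator and denominator of $\psi_{a,c,d}$ reducing it to $\widehat{\Pi}_{3/3,2}$), the ``dip'' of $\lambda$ in $k$ is located near $k \approx \gamma a$ for a moderate positive constant $\gamma$. I would therefore propose $k(a) := \lfloor \gamma a \rfloor$ with $\gamma$ fixed by numerical experimentation, and then verify $\Lambda(k(a),a) < 0$ for every $a > \frac{22}{10}$ by sign analysis of the resulting one-variable polynomial via \Reduce, in the style of the preceding proofs.

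The main obstacle is ensuring the integrality of the witness, i.e.\ that the set $\{k \in \mathbb{R} : \Lambda(k,a) < 0\}$ meets $\mathbb{Z} \cap [4,\infty)$ for every $a > \frac{22}{10}$, and not only for large $a$. I would handle this by exhibiting two continuous functions $k_1(a) < k_2(a)$ with $k_2(a) - k_1(a) > 1$ and $4 \le k_1(a)$, both satisfying $\Lambda(k_i(a),a) < 0$; the non-negativity of the leading $k^3$-coefficient (forced by (\ref{s4p2lc})), combined with a sign analysis of $\Lambda$ viewed as a cubic in $k$, then yields $\Lambda(k,a) < 0$ throughout $[k_1(a), k_2(a)]$, which necessarily contains an integer. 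The polynomial identities and sign certificates that arise are of high degree in $a$ with large integer coefficients but should be within the reach of \Reduce, as throughout Sections \ref{sectionSDIRKs4p2} and \ref{appendixsection9.2}.
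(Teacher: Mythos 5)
Your outline matches the paper's proof at each structural step: compute $\partial_d\lambda = 2P_{4211}(k,a)$ (a cubic in $k$ alone), restrict to a $k$-range where $\partial_d\lambda<0$ and substitute the lower $d$-endpoint from \eqref{s4p215}, repeat for $c$ via $\partial_cP_{4212}$, and finally exhibit an interval of admissible $k$-values of width $>1$ (the paper establishes width $>9$, but $>1$ suffices since the left endpoint already exceeds $4$). The $k(a)=\lfloor\gamma a\rfloor$ idea is a detour you correctly abandon, and the asymptotic heuristic $k \sim \gamma a$ is in fact what the paper's remark at the end records: $\lim_{a\to\infty}\varrho^-_{429}(a)/a = 16$.

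The one claim that does not stand as written is the closing inference: nonnegativity of the $k^3$-coefficient plus negativity of a cubic at two points does \emph{not} yield negativity on the whole interval between them (a cubic such as $(k-1)(k-2)(k-3)$ is negative at $0$ and at $2.5$ yet positive at $1.5$). What rescues the argument is precisely the observation you make yourself a paragraph earlier but fail to carry forward: when $d$ is pushed to its lower bound $\tfrac12(-6a^4+6a^3-a^2-2ac)$, the $k^3$-coefficient $6a^4-6a^3+a^2+2ac+2d$ becomes \emph{identically zero}, not merely nonnegative, so the post-substitution bound is quadratic in $k$ (the paper's $P_{4212}$), and for a quadratic the sign-between-endpoints argument is valid. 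After the second substitution it even factors linearly as $(8a+1)(k-1)\big((7-24a)k+768a^2-96a-14\big)$, which makes the negative $k$-range explicit. So the proof would go through as you execute it, but you should replace ``cubic with nonnegative leading coefficient'' by ``quadratic, because the leading term cancels exactly''; otherwise a reader following your text literally has a gap.
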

\begin{proof}
Let us fix any admissible $a$, $c$ and $d$ throughout the proof. We first compute $\partial_d \lambda(k,a,c,d)=2 P_{4211}(k,a)$, where
\begin{equation}\label{P4211}
P_{4211}(k,a)=k^3-(96 a+6) k^2+\left(2304 a^2+288 a+11\right) k-12288 a^3-2304 a^2-192 a-6, 
\end{equation}
and show that this cubic polynomial (in $k$) has three distinct real roots and also give some bounds on the roots in terms of $a$. 

\begin{figure}[h]
 \centering
 \includegraphics[width=3in]{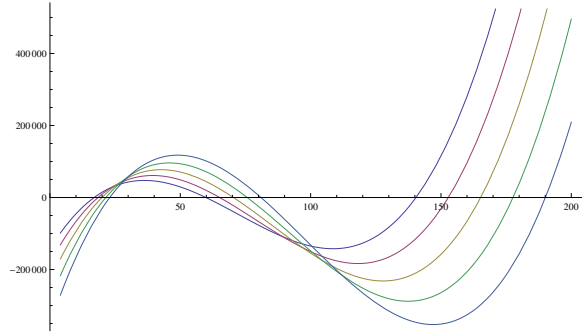}
 \caption{Graphs of the functions $k\mapsto P_{4211}(k,a)$ for some values of the parameter $a>\frac{22}{10}$ in the $\widehat{\Pi}_{4/4,2}$ class. Each curve has three real roots, denoted by $\varrho_{426}(a)_1<\varrho_{426}(a)_2<\varrho_{426}(a)_3$, and two local extrema, denoted by $\varrho^{-}_{425}(a)<\varrho^{+}_{425}(a)$.}
\end{figure}

It is seen that the function $k\mapsto \partial_k P_{4211}(k,a)$ has two distinct real roots at 
$k=\varrho^{\pm}_{425}(a):=\frac{1}{3} \left(96 a+6\pm\sqrt{3} \sqrt{768 a^2+96 a+1}\right)$, being the abscissae of the two strict local extrema of $k\mapsto P_{4211}(k,a)$. It is also easily established that
$P_{4211}(\varrho^-_{425}(a),a)>0$ and $P_{4211}(\varrho^+_{425}(a),a)<0$. By observing that $P_{4211}(4,a)<0$ and $4<\varrho^-_{425}(a)$, further, by taking into account that the leading coefficient of $k\mapsto P_{4211}(k,a)$ is positive, we get that each of the three disjoint intervals
\begin{equation}\label{preciselyone}
\Big(4,\varrho^-_{425}(a)\Big)\cup \Big(\varrho^-_{425}(a),\varrho^+_{425}(a)\Big)\cup \Big(\varrho^+_{425}(a),+\infty\Big)
\end{equation}
contains precisely one root of $k\mapsto P_{4211}(k,a)=\frac{1}{2}\partial_d \lambda(k,a,c,d)$.

By denoting these roots by $\varrho_{426}(a)_1<\varrho_{426}(a)_2<\varrho_{426}(a)_3$, we also see from the properties of the cubic polynomial that if $k\in \big(\varrho_{426}(a)_2,\varrho_{426}(a)_3\big)$, then $\partial_d \lambda(k,a,c,d)<0$. (Note that for any such $k$, $k\ge 4$ holds.) Also referring to (\ref{s4p215}), this means that if $k\in \big(\varrho_{426}(a)_2,\varrho_{426}(a)_3\big)$, then
\[
\lambda(k,a,c,d)\le \lambda\left(k,a,c,  \frac{1}{2} \left(-6 a^4+6 a^3-a^2-2 a c\right)\right)=
6 a (8 a+1)P_{4212}(k,a,c), 
\]
where
\[
P_{4212}(k,a,c)=k^2 \left(4 a^3-5 a^2+a+c\right)+k \left(-192 a^4+212 a^3-27 a^2-48 a c-2 a-3 c\right)+\]\[1536 a^5-1344 a^4+104 a^3+384 a^2 c+18 a^2+48 a c+a+2 c.
\]
Clearly, in order to finish the proof, it is sufficient to show that there is an \textit{integer} $k\in \big(\varrho_{426}(a)_2,\varrho_{426}(a)_3\big)$ such that $P_{4212}(k,a,c)<0$.

Let the two roots of  $k\mapsto \partial_c P_{4212}(k,a,c)=k^2-(48 a+3) k+384 a^2+48 a+2$ be denoted by
$\varrho^{\pm}_{427}(a):=\frac{1}{2} \left(48 a+3\pm\sqrt{768 a^2+96 a+1}\right)$, then $\partial_c P_{4212}(k,a,c)<0$ if $k\in \big(\varrho^-_{427}(a),\varrho^+_{427}(a)\big)$. We 
will show next that 
\begin{equation}\label{intersection}
\big(\varrho_{426}(a)_2,\varrho_{426}(a)_3\big)\cap\big(\varrho^-_{427}(a),\varrho^+_{427}(a)\big)=\big(\varrho_{426}(a)_2,\varrho^+_{427}(a)\big).
\end{equation}
In fact, we have seen in (\ref{preciselyone}) that $\varrho^-_{425}(a)<\varrho_{426}(a)_2$, and an elementary calculation shows that $\varrho^-_{427}(a)<\varrho^-_{425}(a)$. An analogous argument tells us that 
$\varrho^+_{427}(a)<\varrho^+_{425}(a)<\varrho_{426}(a)_3$, verifying (\ref{intersection}).

Thus, by picking any $k\in\big(\varrho_{426}(a)_2,\varrho^+_{427}(a)\big)$, we can conclude (with the help of (\ref{s4p215}) also) that
\[
P_{4212}(k,a,c)\le P_{4212}\left(k,a, \frac{1}{192} \left(-768 a^3+768 a^2-160 a+7\right)\right)=
\]
\[
\frac{1}{192} (8 a+1) (k-1) \left((7-24 a) k+768 a^2-96 a-14\right),
\]
from which we see that the proof is finished if we find any $k\in\big(\varrho_{426}(a)_2,\varrho^+_{427}(a)\big)\cap\mathbb{N}$ such that $(7-24 a) k+768 a^2-96 a-14<0$, or, in other words, $k>\frac{768 a^2-96 a-14}{24 a-7}$.

We aim to show now that $\varrho_{426}(a)_2<\frac{768 a^2-96 a-14}{24 a-7}$. (Notice that this time both $\varrho_{426}(a)_2<\varrho^+_{425}(a)$ and $\frac{768 a^2-96 a-14}{24 a-7}<\varrho^+_{425}(a)$ are true, so we have to take an extra step.) But by referring to (\ref{preciselyone}) we have
$\varrho_{426}(a)_1<\varrho^-_{425}(a)$, and  an elementary computation yields that $\varrho^-_{425}(a)<\frac{768 a^2-96 a-14}{24 a-7}$, further, it is not hard to see  that
\[
P_{4211}\left( \frac{768 a^2-96 a-14}{24 a-7} ,a\right)=-\frac{48 a (8 a+1) (16 a-3) \left(9216 a^3-1512 a+49\right)}{(24 a-7)^3}<0.
\]
However, we already know that $P_{4211}(\cdot,a)<0$ if and only if $k\in \big(-\infty,\varrho_{426}(a)_1\big)\cup \big(\varrho_{426}(a)_2,\varrho_{426}(a)_3\big)$, so these force
 $\varrho_{426}(a)_2<\frac{768 a^2-96 a-14}{24 a-7}$. 

Summarizing, it is enough to prove as a last step that $\varnothing\ne\big(\frac{768 a^2-96 a-14}{24 a-7},\varrho^+_{427}(a)\big)\cap\mathbb{N}$, being true, since  
$\varrho^+_{427}(a)-\frac{768 a^2-96 a-14}{24 a-7}>9$, if $a> \frac{22}{10}$.
\end{proof}

\begin{rem} Similarly to Remark \ref{length9}, the bound 9 could not be replaced by, say, 10 on $a\in(22/10,+\infty)$. 
\end{rem}

\begin{rem} The proof we have just presented for Lemma \ref{lemmas4p27.5} (as well as the one we will present for the next lemma) would break down on the larger interval $a\ge \frac{9+2 \sqrt{6}}{24}$. This explains why we have chosen the finer subdivision $a\in\left[\frac{1}{24} \left(9+2 \sqrt{6}\right),  \frac{22}{10}\right]\cup \left(\frac{22}{10},+\infty\right)$.
\end{rem}

\begin{lem}
For each $\frac{22}{10}< a$, and any $c$ and $d$ satisfying (\ref{s4p216}), there is an integer $k\ge 4$ such that $\lambda(k,a,c,d)<0$.
\end{lem}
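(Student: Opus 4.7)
The plan is to mirror the proof of Lemma \ref{lemmas4p27.5}, with (\ref{s4p216}) substituted for (\ref{s4p215}). Since $\partial_d \lambda(k,a,c,d) = 2\, P_{4211}(k,a)$ depends only on $k$ and $a$, the real roots $\varrho_{426}(a)_1 < \varrho_{426}(a)_2 < \varrho_{426}(a)_3$, the local extrema $\varrho^\pm_{425}(a)$, and the fact that $\partial_d \lambda < 0$ precisely on the middle interval $(\varrho_{426}(a)_2, \varrho_{426}(a)_3) \subset (4, +\infty)$ all carry over verbatim. Accordingly, the first step is to restrict $k$ to this open middle interval, which already forces $k \ge 4$ and permits replacing $d$ by any valid lower bound to produce an upper estimate on $\lambda$.

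The decisive second step is to substitute $d$ by $\tfrac{1}{2}(-6a^4 + 6a^3 - a^2 - 2ac)$ \emph{rather than} by the binding bound $\tfrac{1}{4096}(-384a^2 + 224a + 512c - 25)$ that actually defines region (\ref{s4p216}). The former is still a legitimate lower bound on $d$ because (\ref{s4p2lc}) is a necessary condition in the hypothesis, and it has the crucial property of cancelling the $k^3$-coefficient of $\lambda$. Substituting it yields the \emph{same} quadratic-in-$k$ bound $\lambda(k,a,c,d) \le 6a(8a+1)\, P_{4212}(k,a,c)$ as in Lemma \ref{lemmas4p27.5}, so the analysis of $\partial_c P_{4212}$---its roots $\varrho^\pm_{427}(a)$ and the intersection identity $(\varrho_{426}(a)_2, \varrho_{426}(a)_3) \cap (\varrho^-_{427}(a), \varrho^+_{427}(a)) = (\varrho_{426}(a)_2, \varrho^+_{427}(a))$---imports without change.

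The only genuinely new computation is to substitute the region-specific lower bound $c \ge \tfrac{1}{512}(-1536a^3 + 1728a^2 - 424a + 25)$ from (\ref{s4p216}) into $P_{4212}(k,a,c)$. A direct expansion, followed by extraction of the anticipated common factor $(8a+1)$, produces an explicit quadratic $Q(\cdot,a)$ in $k$ whose coefficients are polynomials in $a$ of moderate degree. It then suffices to exhibit an integer $k \in (\varrho_{426}(a)_2, \varrho^+_{427}(a))$ with $Q(k,a) < 0$. In parallel with Lemma \ref{lemmas4p27.5}, I would let $\rho(a)$ denote the larger real root of $Q(\cdot, a)$ and verify $P_{4211}(\rho(a),a) < 0$; by the sign pattern of $P_{4211}$ this forces $\varrho_{426}(a)_2 < \rho(a)$, while an analogous elementary estimate places $\rho(a) < \varrho^+_{427}(a)$.

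The main obstacle I anticipate is the closing \emph{integer gap} estimate $\varrho^+_{427}(a) - \rho(a) > 1$ uniformly for $a > 22/10$. Because $\rho(a)$ is now the larger root of a genuine quadratic rather than of a linear factor as in Lemma \ref{lemmas4p27.5}'s clean ``$> 9$'' bound, the gap is presumably narrower, and the cutoff $a = 22/10$ in the hypothesis is likely calibrated to exactly this inequality. A finer subdivision of $a$, driven either by the sign of the leading coefficient of $Q(\cdot, a)$ or by the relative size of the $c$-endpoints in (\ref{s4p216}), may also be required so that the monotonicity-in-$c$ substitution points in the correct direction on each piece---yielding, as in the preceding lemmas, a handful of sub-cases each closed by the same chain of estimates.
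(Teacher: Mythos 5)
Your route is genuinely different from the paper's and, once one hypothesis is checked, it actually works and turns out to be \emph{shorter}. The paper picks the branch $\partial_d\lambda>0$ on $k\in[\varrho_{426}(a)_1,\varrho_{426}(a)_2]$, substitutes the \emph{upper} $d$-bound from (\ref{s4p216}) to obtain a fresh quantity $P_{4213}$, and then has to redo the whole $\partial_c$/root/integer-gap analysis from scratch ($P_{4214}$, $\varrho^\pm_{428}$, $\varrho^\pm_{429}$, $\Delta_{42}$, and the closing chain (\ref{16a+24})). You instead pick $\partial_d\lambda<0$ on $(\varrho_{426}(a)_2,\varrho_{426}(a)_3)$ and deliberately use the \emph{non-binding} lower bound $d\ge\tfrac12(-6a^4+6a^3-a^2-2ac)$, which annihilates the $k^3$ coefficient and reproduces exactly $P_{4212}$; all of Lemma \ref{lemmas4p27.5}'s $\partial_c$ machinery and the interval identity then carry over verbatim. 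The one obligation you leave implicit is that (\ref{s4p216}) really implies (\ref{s4p2lc}): the lemma's stated hypothesis is only (\ref{s4p216}), not the whole order system. This does hold and cleanly: multiplying the inequality $\tfrac{1}{4096}(-384a^2+224a+512c-25)\ge\tfrac12(-6a^4+6a^3-a^2-2ac)$ through by $512(8a+1)>0$ rearranges to precisely $c\ge\tfrac{1}{512}(-1536a^3+1728a^2-424a+25)$, which is (\ref{s4p216})'s own $c$-lower-bound, so the (\ref{s4p2lc}) bound on $d$ follows. You should state and verify this.

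Where you create unnecessary work for yourself is the closing step. You expect to have to analyze a ``genuine quadratic'' $Q(k,a)=P_{4212}\big(k,a,\tfrac{1}{512}(-1536a^3+1728a^2-424a+25)\big)$ and worry about a narrower integer gap calibrated by the cutoff $a=\tfrac{22}{10}$. There is no need: since $\partial_c P_{4212}(k,a,\cdot)<0$ on $(\varrho^-_{427}(a),\varrho^+_{427}(a))$ and, for $a>\tfrac{22}{10}$, $\tfrac{1}{512}(-1536a^3+1728a^2-424a+25)-\tfrac{1}{192}(-768a^3+768a^2-160a+7)=a^3-\tfrac58a^2+\tfrac{1}{192}a+\tfrac{19}{1536}>0$, every $(c,d)$ obeying (\ref{s4p216}) also obeys \emph{both} of the inequalities actually used in Lemma \ref{lemmas4p27.5}'s proof, namely $d\ge\tfrac12(-6a^4+6a^3-a^2-2ac)$ and $c\ge\tfrac{1}{192}(-768a^3+768a^2-160a+7)$. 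Hence the \emph{same} integer $k\in\big(\tfrac{768a^2-96a-14}{24a-7},\varrho^+_{427}(a)\big)$ produced at the end of Lemma \ref{lemmas4p27.5} already gives $\lambda(k,a,c,d)<0$ here; no new quadratic, no new gap estimate. The paper's remark that the other feasible direction ``requires approximately the same amount and type of computations'' evidently presumes the binding (\ref{s4p216}) lower bound on $d$; your choice of the (\ref{s4p2lc}) bound collapses this lemma into a corollary of the previous one.
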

\begin{proof}
We have seen in the preceding proof that $\partial_d \lambda(k,a,c,d)\ge 0$ if 
$k\in \big[\varrho_{426}(a)_1,\varrho_{426}(a)_2\big]\subset (4,+\infty)$, so---by (\ref{s4p216})---for these $k$ values we have 
$\lambda(k,a,c,d)\le$
\[
 \lambda\left(k,a,c,  \frac{1}{2048}\left(768 a^3-512 a^2-512 a c+104 a+192 c-7\right)\right)=\frac{(8 a+1) (k-1)}{1024} P_{4213}(k,a,c),
\]
where \[P_{4213}(k,a,c)=k^2 \left(768 a^3-768 a^2+160 a+192 c-7\right)+\]
\[
k \left(-49152 a^4+39168 a^3-5376 a^2-12288 a c-128 a-960c+35\right)+
\]\[
589824 a^5-294912 a^4+29184 a^3+147456 a^2 c+3840 a^2+24576 a c-384 a+1152 c-42.
\]
To show that $P_{4213}(k,a,c)<0$ for a suitable $k\in \big[\varrho_{426}(a)_1,\varrho_{426}(a)_2\big]\cap \mathbb{N}$, we first compute 
\[
\partial_c P_{4213}(k,a,c)=192 \left(k^2-(64 a+5) k+768 a^2+128 a+6\right), 
\]
and denote the roots of $k\mapsto \partial_c P_{4213}(k,a,c)$ by
$\varrho^{\pm}_{428}(a):=\frac{1}{2} \left(64 a+5\pm\sqrt{1024 a^2+128 a+1}\right)$.
Note that $\partial_c P_{4213}(k,a,c)<0$ if $\varrho_{428}^{-}(a)<k<\varrho_{428}^{+}(a)$.
Now by using formulae (\ref{preciselyone}) and (\ref{P4211}), one easily shows that $\varrho^{-}_{425}(a)<\varrho^{-}_{428}(a)<\varrho^{+}_{425}(a)$ and 
\[
P_{4211}\left( \varrho^{-}_{428}(a) ,a\right)=32 a (8 a+1) \left(\sqrt{1024 a^2+128 a+1}-16 a-1\right)>0, 
\]
 implying $\varrho_{426}(a)_1<\varrho^{-}_{428}(a)<\varrho_{426}(a)_2$. Hence if $k\in \big(\varrho_{428}^{-}(a),\varrho_{426}(a)_2\big]\cap \mathbb{N}$, then, also by (\ref{s4p216}), 
\[
P_{4213}(k,a,c)\le P_{4213}\left(k,a,\frac{1}{512} \left(-1536 a^3+1728 a^2-424 a+25\right)\right)=
\frac{1}{8} (8 a+1)P_{4214}(k,a),
\]
with
\[
P_{4214}(k,a)=k^2\left(192 a^2-144 a+19\right) +k\left(-12288 a^3+2112 a^2+1296 a-95\right) +\]\[147456 a^4+110592 a^3-27264 a^2-2016 a+114.
\]
Now the leading coefficient, $192 a^2-144 a+19$ of $k\mapsto P_{4214}(k,a)$ is positive, and its discriminant, $\Delta_{42}(a):=$
\[
37748736 a^6-51904512 a^5+46043136 a^4-14751744 a^3+2101632 a^2-27360 a+361
\]
is also positive. (This last statement can easily be shown by noticing that $\Delta^{\prime\prime\prime\prime}_{42}(a)>0$ for all $a\in \mathbb{R}$, and $\Delta^{\prime\prime\prime}_{42}(22/10)>0$, so $\Delta^{\prime\prime\prime}_{42}(a)>0$ for $a>22/10$. Repeating this recursively and similarly for the lower order derivatives we get that $\Delta_{42}(a)>0$ for all $a>22/10$.) This means that we can denote by $\varrho^{\pm}_{429}(a)$ the two real roots of the quadratic polynomial $k\mapsto P_{4214}(k,a)$. Clearly, $\varrho^{-}_{429}(a)<k<\varrho^{+}_{429}(a)$ implies $P_{4214}(k,a)<0$.

In light of the above, in order to finish the proof it is enough to show that
\[
\varnothing\ne \Big( \varrho^{-}_{429}(a),\varrho^{+}_{429}(a)\Big)\cap \Big(\varrho_{428}^{-}(a),\varrho_{426}(a)_2\Big]\cap \mathbb{N}.
\]
More specifically, we can prove that the following sufficient condition 
\begin{equation}\label{16a+24}
\varrho^{-}_{428}(a)<\varrho^{-}_{429}(a)<16a+19<16a+24<\varrho_{426}(a)_2<\varrho^{+}_{429}(a)
\end{equation}
holds: the first inequality is true because of \[P_{4214}\left(\varrho^{-}_{428}(a),a\right)=128 a \left((24 a-7) \sqrt{1024 a^2+128 a+1}+136 a+7\right)>0\] and 
\[
\partial_k P_{4214}\left(\varrho^{-}_{428}(a),a\right)=-6144 a^2+1792 a-\left(192 a^2-144 a+19\right) \sqrt{1024 a^2+128 a+1}<0;
\] 
the second inequality is a consequence of \[P_{4214}(16a+19,a)=-16 \left(576 a^3-1264 a^2+1209 a-323\right)<0;\] the fourth inequality follows from $P_{4211}(16a+24,a)=4096 a^3+768 a^2-25360 a+10626>0$ and $16a+24<\varrho^{+}_{425}(a)$; while the fifth inequality is derived from the fact that
\[\varrho^{-}_{425}(a)<\varrho^{+}_{429}(a)=\frac{12288 a^3-2112 a^2-1296 a+95+\sqrt{\Delta_{42}(a) }}{2 \left(192 a^2-144 a+19\right)}\]
and
\[
P_{4211}\left(\varrho^{+}_{429}(a),a\right)=-\frac{96 a}{{\left(192 a^2-144 a+19\right)^3}}\times \]\[
\Big(301989888 a^8-622854144 a^7+474218496 a^6-1089110016 a^5+816156672 a^4-250172928 a^3+\]\[ 31469760 a^2-960184 a+9025+\]\[\left(98304 a^5-135168 a^4-132096 a^3+106880 a^2-19464a+475\right) \sqrt{\Delta_{42}(a) }\Big)<0.
\]
\end{proof}

\begin{rem} In the proof of this lemma---in order to establish $\lambda(k,a,c,d)<0$---one would have 4 initial possibilities to choose from according to the signs of the appropriate $\partial_d$ and $\partial_c$ partial derivatives. It turns out that if one chooses certain 2 out of these 4 possibilities, then the proof could {\rm{not}} be completed. On the other hand, the remaining two feasible directions (out of which we have presented one) require approximately the same amount and type of computations.
\end{rem}

\begin{rem} The motivation for (\ref{16a+24}) came from the fact that
\[
\lim_{a\to +\infty}\frac{\varrho_{429}^{-}(a)}{a}=16<\lim_{a\to +\infty}\frac{\varrho_{426}(a)_2}{a}=\mathrm{root}_2(1,-96,2304,-12288)\approx 26.4432.
\]
In fact, for $a>22/10$, we have $\varrho_{426}(a)_2-\varrho_{429}^{-}(a)>6$, but $\ldots>7$ does not hold.
\end{rem}

\section{Determination of $\widehat{R}_{4/4,p}$ for $3\le p\le 5$\label{sectionSDIRKs4p543}}

For the sake of brevity and due to the fact that the techniques here are very similar to the corresponding earlier ones, we provide only brief (indication of the) proofs of the remaining results.

\subsection{Determination of $\widehat{R}_{4/4,5}$}\label{section8.1}

We prove here that $\widehat{R}_{4/4,5}\approx 3.7432$. The exposition is completely analogous to the one in Section \ref{sectionSDIRKs3p4}. There are exactly 4 rational functions in this $\widehat{\Pi}_{4/4,5}$ class:
\[
\psi(z)=\frac{a_4 z^4+a_3 z^3+a_2 z^2+a_1 z+1}{(1-a z)^4},
\]
where 
\[a_4=\text{root}_{m_4}(2160000,4032000,-93600,-120,1), \quad a_3=\text{root}_{m_3}(18000,24000,-1800,0,1),\]
\[a_2=\text{root}_{m_2}(1200,-7200,120,200,3), \quad a_1=\text{root}_{m_1}(15,60,-30,-20,7),\]
\[a=\text{root}_{m_0}(120,-240,120,-20,1),\]
with 
\[
(m_4,m_3,m_2,m_1,m_0)\in\{ (1,1,4,1,4), (2,2,2,3,2), (3,3,3,4,1), (4,4,1,2,3) \}.
\]
First we remark that in all 4 cases the only pole of $\psi$ is positive real.

If $m_4=1$, then the corresponding $\psi$ satisfies $\psi^{(5)}(-1/2)<0<\psi^{(5)}(0)=1$, hence $\psi^{(5)}$ has a root in $(-1/2,0)$, so by Theorem \ref{vdgkLemma4.5} with $\ell=5$ it has $R(\psi)< 1/2$. 

If $m_4=2$, then the corresponding $\psi$ satisfies $\psi^{(8)}(0)<0<\psi^{(8)}(-1)$, hence by Theorem \ref{vdgkLemma4.5} with $\ell=8$ it has $R(\psi)< 1$.

If $m_4=4$, then the corresponding $\psi$ satisfies $\psi^{(7)}(-1/2)<0<\psi^{(7)}(0)$, hence by Theorem \ref{vdgkLemma4.5} with $\ell=7$ it has $R(\psi)< 1/2$.

If $m_4=3$, then the corresponding $\psi$ satisfies $\psi^{\prime}(x^*)=0$ with
\[
x^*:=\text{root}_1(1,12,48,-32,-864,-2016,2784,13248,-9072,-35136,44928,-20736,3456)\approx
\]
\[
-3.743299247417768882803493,
\]
hence by Theorem \ref{vdgkLemma4.5} with $\ell=1$, it has $R(\psi)\le |x^*|$. By using partial fraction decomposition, we can show that $R(\psi)=|x^*|$, therefore $\widehat{R}_{4/4,5}=|x^*|$.

\subsection{Determination of $\widehat{R}_{4/4,4}$}\label{section8.2}

We prove that $\widehat{R}_{4/4,4}\approx 5.1672$. We refer to Section \ref{sectionSDIRKs3p3} for an analogous treatment and for further details on the theorems used. Elements of this class can be written as 
\[
\psi_a(z)=(1-a
   z)^{-4}\left(\left(a^4-4 a^3+3 a^2-\frac{2 a}{3}+\frac{1}{24}\right) z^4+\left(-4 a^3+6 a^2-2 a+\frac{1}{6}\right) z^3+\right.\]
\[\left.\left(6 a^2-4 a+\frac{1}{2}\right) z^2+(1-4 a) z+1\right)
\]
with a real parameter $a$. Here also we can assume that $a>0$ and $\psi_a$ does not have removable singularities (indeed, $a=0$ implies $R=1$; $a<0$ without removable singularities implies $R=0$; while if the numerator and denominator have a common root, then $\psi_a$ is one of the 3 rational functions in Section \ref{sectionSDIRKs3p4}, hence $R\le \widehat{R}_{3/3,4}<5$). By applying the same techniques as earlier in the one-parameter families, we can prove that
\[\widehat{R}_{4/4,4}=\text{root}_3(1,-24,240,-1168,1848,7008,-30528,7488,71568,36864)\approx\]
\[ 5.167265421277419938673374,\]
with the unique optimal rational function corresponding to the parameter value 
\[
a=a^*:=\text{root}_1(147456,-546624,799488,-601344,258432,-66576,10400,-960,48,-1)\approx
\]
\[
0.09713312764144710280835106.
\]

\subsection{Determination of $\widehat{R}_{4/4,3}$}\label{section8.3}

This class can be described by 2 real parameters. We can prove along the lines of the uniqueness-type proof in 
Section \ref{sectionSDIRKs3p2} that $\widehat{R}_{4/4,3}=3+\sqrt{15}\approx 6.8729$.

\section{Appendix}\label{appendixsection}

In this section we list some auxiliary algebraic numbers, polynomials or functions that have appeared in certain more involved proofs.

\subsection{Algebraic expressions in the proof of $R_{4/4,7}$}\label{appendixsection9.1}

\indent   $\vr_{471}=\text{root}_1(252105,936390,1441629,1175608,534576,128352,12704)\approx -0.843194$,

\indent $\vr_{472}=\text{root}_2(252105,936390,1441629,1175608,534576,128352,12704)\approx -0.471357$,

\indent $\vr_{473}=\text{root}_2(1,-30,390,-2760,11160,-25200,25200)\approx 7.64527$,

\indent $\vr_{474}=\text{root}_1(1,-15,90,-210)\approx 5.64849$,

\indent $P_{471}(a)=4593387934777490821322994 a^{10}-23474176816503998442760098 a^9+$\\
\indent \indent \indent \ \ \ \ $54351276637181597088697031 a^8-75708182541462463946985360 a^7+$\\
\indent \indent \indent \ \ \ \ $71791544313152743211806464
   a^6-51339236908163135191695540 a^5+$\\
\indent \indent \indent \ \ \ \ $32383499796235996766706978 a^4-22447171953065668650772896 a^3-$\\
\indent \indent \indent \ \ \ \ $31555722050640962925153690 a^2-13278623731050880370830074
   a-$\\
\indent \indent \indent \ \ \ \ $1667865061502294482628289$,

\indent $\vr_{475}=\text{root}_1(2914539265575,14876524399779,33497711997246,43683499824678,$\\\indent \indent \indent \indent\ \  \ 
$36368607954483,20052618149655,7324770907832,1709817444048,$
\\\indent \indent \indent \indent\ \  \ $231520801344,13859993824)\approx -0.850052$,

\indent $\vr_{476}=\text{root}_2(2,0,-570,-6360,-28755,-58950,-44775)\approx 21.5907$,

\indent $\vr_{477}(a)=\text{root}_1(343 a^3+441 a^2+189 a+27,-6174 a^3-7644 a^2-3150 a-432,$
\\\indent \indent \indent \indent \indent\ \   $47334 a^3+55860 a^2+21882 a+2844,-197568 a^3-219912 a^2-80976 a-$\\\indent \indent \indent \indent \indent\ \   
$9856,478485 a^3+500535 a^2+173460
   a+19950,-648270 a^3-643860 a^2-$\\\indent \indent \indent \indent \indent\ \   
$215460 a-24480,385875 a^3+368235 a^2+120960 a+13440)$,

\indent $\vr_{478}=\text{root}_1(1525366344600,8651006268660,22401405617094,35008454688795,$\\
\indent \indent \indent \indent  \ \ \  $36765395913141,27330042736584,14744440158000,5816571277965,$\\\indent \indent \indent \indent  \ \ \ 
$1665232178673,337417309264,45932541984,3771903744,14
   1310592)\approx $\\\indent \indent \indent \indent  \ \ \ $-0.469514$,

\indent $\vr_{479}=\text{root}_3(1715,2793,1428,232)\approx -0.358565$,

\indent $\vr_{4710}=\text{root}_1(3675,3507,1152,128)\approx -0.274796$,

\indent $\vr_{4711}(a)=\text{root}_1(7 a+3,-84 a-32,420 a+120,-840 a,-840)$,

\indent $\vr_{4712}(a)=\text{root}_2(7 a+3,-84 a-32,420 a+120,-840 a,-840)$,\\

\indent $a_{47}^*=\text{root}_1(3690134492416632557532940385381064525,$\\
$61029736485482612404619408048530290750,486403976093629932677089384897095046935$\\
$2487196669730164018264503869937911017740,9165142322735094610456262590295894934837$\\
$25915196147894075773838728122892827040386,58458916104726476816514027882738196455735,$\\
$107979092136285121513038059062379021459880,166321122780685405075074427832862986958975,$\\
$216449429305753337078085597912060934057890,240240731812681651181007922270907964916605,$\\
$228911785323865533798715657213902093130380,188040840503767270695413477386184055716415,$\\
$133446371333215106255009916213506161760670,81810565480540039001637010346621868861045,$\\
$43219005242542299485983226045186624870400,19562352897200734179995371112351361862080,$\\
$7506336744004524872039975974458313909440,2394351571156175328455192580869480281280,$\\
$610166214096532168321831058806216604160,112156289536490108521784144024335180800,$\\
$8974819895289125868510161871145369600,-2895655868721204798705192374322355200,$\\
$-1556648822533171464739057080454594560,-420567479322606922282373395209093120,$\\
$-79380339166868822139598939104215040,-11057440115321274264887170064056320,$\\
$-1128821021046790224548013859143680,-80578528495036917551879568752640,$\\
$-3612945212567877804734004854784,-76850622516036469593789693952)\approx -0.439849$,\\

\indent $B\left(a_{47}^*\right)=\text{root}_2(4191472,370695456,15701398968,423572490288,8166117227943,$\\
$119697694352106,1385540391042992,12982888147808790,100093600309232610,641253042735937920,$\\
$3429070908298155495,15292307228951973150,56488604555080263600,170258230386661619700,$\\
$405691319555093173950,698620766882164002000,475967180133964116000,$\\
$-2768737485607368840000,-19171364099094461844000,-83177899383693915360000,$\\
$-273285810570616506720000,-658873655023431060000000,-1076077292526138186000000,$\\
$-1039429806316804224000000,-402385174364630880000000,1632495959008528320000000,$\\
$15171036284831515200000000,63475278812225280000000000,141035850392839718400000000,$\\
$165407760061818624000000000,81912466393111872000000000)\approx 2.743911.$

\subsection{Algebraic expressions in the proof of $\widehat{R}_{4/4,2}$}\label{appendixsection9.2}

\indent  $P_{421}(a,c)=1536 a^5-1280 a^4-512 a^3 c+436 a^3+576 a^2 c-54 a^2-72 a c+2 a+c $, \\
\indent $P_{422}(a,c)=2304 a^4-1728 a^3-1024 a^2 c+480 a^2+768 a c-48 a-48 c+1 $, \\
\indent $P_{423}(a,c)=768 a^3-512 a^2-512 a c+104 a+192 c-7 $, \\
\indent $P_{424}(a,c)= -384 a^2+224 a+512 c-25$, \\
\indent $P_{425}(a)=512 a^3-7296 a^2+22800 a-17575$, \\
\indent $\varrho_{421}=\text{root}_1(512,-7296,22800,-17575)\approx 1.17854,$ \\
\indent $P_{426}(a,c)=98304 a^5-679936 a^4+1266432 a^3+24576 a^2 c-827264 a^2-153600 a c+156400 a+$\\\indent \indent \indent \indent \ \  $177600 c-6475$, \\
\indent $P_{427}(a,c)=768 a^5-8064 a^4+20072 a^3+192 a^2 c-15853 a^2-1824 a c+2888 a+2850 c$,\\
\indent $P_{428}(a)=1024 a^3-10176 a^2+22048 a-11713$,\\
\indent $\varrho_{422}=\text{root}_1(1024,-10176,22048,-11713)\approx 0.808208,$ \\
\indent $\varrho_{423}=\text{root}_2(1024,-10176,22048,-11713)\approx 1.97947,$ \\
\indent $P_{429}(a,c)=49152 a^5-245760 a^4+365312 a^3+12288 a^2 c-210496 a^2-53248 a c+38272 a+$\\\indent \indent \indent \indent \ \  \,$42432 c-1547$,\\
\indent $P_{4210}(a,c)=5406720 a^5-28110848 a^4-131072 a^3 c+44025856 a^3+2605056 a^2 c-26966400 a^2-$ \\\indent \indent \indent \indent \ \  \  \,$8466432 a c+5517512 a+5997056 c-292825$,\\
\indent $\varrho_{424}=\text{root}_1(256,-5088,16536,-11713)\approx 1.00126.$

\bibliographystyle{plain}
\bibliography{abs_mon_SDIRK}

\affiliationone{
   Lajos L\'oczi and David I. Ketcheson\\
   4700 KAUST \\
   Thuwal, 23955\\
   Saudi Arabia
   \email{lajos.loczi@kaust.edu.sa\\
   david.ketcheson@kaust.edu.sa}}

\end{document}